\documentclass[a4paper,10pt]{article}
\usepackage[utf8]{inputenc}
\usepackage{amsmath,amsthm, amsfonts, amssymb, graphicx, bm, color, ulem}
\usepackage{tikz}
\usepackage[top=2.5cm, bottom=2.5cm, left=2.5cm, right=2.5cm]{geometry}
 
\def\minus{%
{\resizebox{ 2px}{!}{\rm{-}}}
}
\def\plus{
{\resizebox{4px}{!}{\bf{+}}}
}

 \newtheorem{theorem}{Theorem}[section]
 \newtheorem{lemma}[theorem]{Lemma}
 \newtheorem{corollary}[theorem]{Corollary}
 \newtheorem{proposition}[theorem]{Proposition}
 \newtheorem{definition}[theorem]{Definition}

 \newenvironment{remark}{ \refstepcounter{theorem}{\bf Remark \thetheorem~}}{\hfill$\square$\medskip}
 
 \newenvironment{example}{ \refstepcounter{theorem}{\bf Example \thetheorem~}}{\hfill$\square$\medskip}
 
\newcommand{\Sym}{\Phi_{\minus\theta}}
\newcommand\samethanks[1][\value{footnote}]{\footnotemark[#1]}

\title{The omnidirectional trace in $H^1(\Omega)$}
\author{R. Eymard\thanks{Universit\'e Gustave Eiffel, LAMA, (UMR 8050), UPEM, UPEC, CNRS, F-77454, Marne-la-Vall\'ee (France)\newline
{\tt robert.eymard, david.maltese, lucas.oger@univ-eiffel.fr}},
T. Gallou\"et\thanks{I2M UMR 7373, Aix-Marseille Universit\'e, CNRS, Ecole Centrale de Marseille,
F-13453 Marseille (France)\newline
{\tt thierry.gallouet@anciens.univ-amu.fr}},
D. Maltese\samethanks[1] \ and L. Oger\samethanks[1]}

\begin{document}
\maketitle

\begin{abstract}
We first prove that all the functions in $L^2$ whose directional derivative is in $L^2$ have a directional trace on the boundary of any open bounded domain, without assumptions on its regularity. This enables us to define the omnidirectional trace of the elements of the Sobolev space $H^1(\Omega)$ for which there exists a function on the boundary that is almost everywhere equal, with respect to the directional measure, to the directional trace, regardless of the direction. The set of all these elements of $H^1(\Omega)$, denoted by $H^1_{\rm tr}(\Omega)$, is shown to be closed, and to always contain the closure in $H^1(\Omega)$ of the set $C^0(\overline{\Omega})\cap H^1(\Omega)$ (it is always equal to this set in the 1D case, and can be strictly greater in higher dimensions). The omnidirectional trace always satisfies an integration-by-parts formula, which combines the values of the trace on opposite points of the boundary. Examples show  that this notion enables the resolution of variational problems involving the values at the boundary of the domain.
\end{abstract}

{\bf Keywords:} trace operator, Sobolev space, integration-by-parts, directional trace, integration for a family of measures, Cantor sets.

{\bf AMS classification:} 46E35, 46E40, 28A75, 35A23, 26B20

\section{Introduction}

We consider, for an integer $d \ge 1$, a bounded open domain $\Omega\subset {\mathbb R}^d$, denoting by $\partial\Omega = \overline{\Omega} \setminus \Omega$ its boundary. For a given function $u \in H^1(\Omega)$ -- the set of elements of $L^2(\Omega)$ whose gradient in the distribution sense is an element of $L^2(\Omega)^d$ -- defining a trace of $u$ on the boundary of $\Omega$ is an important problem in the theory of partial differential equations. Indeed, the elements of $H^1(\Omega)$ may have no continuous representative on $\overline{\Omega}$ if $d \ge 2$, and the use of a trace operator is indispensable for writing integration-by-parts formulas that underpin the weak formulation of such equations.

\medskip

The first observation is that, if some part of the boundary of $\Omega$ is accessible by two sides (this is the case of domains with cracks), defining a single value for $u \in H^1(\Omega)$ on this part of the boundary is not possible, unless $u$ is continuous across it. This means that we cannot expect that, for a general open bounded domain $\Omega \subset {\mathbb R}^d$, the domain of such a trace operator be the whole space $H^1(\Omega)$. The goal is therefore to define the trace operator on a subset of $H^1(\Omega)$ that coincides with the whole space when additional hypotheses are imposed on $\Omega$.

\medskip

{\it From the trace on Lipschitz domains to the ``approximative'' trace.}

\medskip

Let us recall that the notion of trace is standardly introduced in the case of domains with Lipschitz regularity. The existence of this trace is obtained through smooth local charts which transform the computations at the boundary of the domain into computations on $\mathbb{R}^{d-1}$ viewed as the half-space boundary $\partial(\mathbb{R}_+ \times \mathbb{R}^{d-1})$ (see the seminal paper \cite{Gagliardo1957}, and see \cite[Theory of traces]{brezis} for the introduction of local charts). The trace of any function $u \in H^1(\Omega)$ can then be defined as the limit, in $L^2(\partial\Omega, \mathcal{H}^{d-1})$, of the restrictions to $\partial\Omega$ of smooth functions converging to $u$ in $H^1(\Omega)$. Here $\mathcal{H}^{d-1}$ denotes the $(d-1)$-dimensional Hausdorff measure on $\mathbb{R}^d$ (see \cite[Definition 2.1]{evans2015meas}).

\medskip

For more general domains, this procedure was extended in \cite{ae2011dirtoneu, are2024perron, are2003lap, sauter2020uniq}. In these works, the set of the so-called approximative traces of a given $u \in H^1(\Omega)$ is that of all $\varphi \in L^2(\partial \Omega, \mathcal{H}^{d-1})$ such that there exists a sequence $(u_n)_{n \ge 0}$ in $H^1(\Omega) \cap C^0(\overline{\Omega})$ such that the sequence $(u_n)_{n \ge 0}$ converges to $u$ in $H^1(\Omega)$ and  $({u_n}_{|_{\partial\Omega}})_{n \ge 0}$ converges to $\varphi$ in $L^2(\partial \Omega,\mathcal{H}^{d-1})$. This definition is used in \cite{ae2011dirtoneu} with the aim of studying the Dirichlet-to-Neumann operator.

\medskip

Let us observe that this definition opens a series of restrictions and questions. It requires  the hypothesis that $\mathcal{H}^{d-1}(\partial\Omega) < +\infty$, which excludes the case of domains with fractal boundaries. Denoting by $\widetilde{H}^1(\Omega)$ the closure of $H^1(\Omega)\cap C^0(\overline{\Omega})$ in $H^1(\Omega)$, the set of the approximative traces is empty for all $u \in H^1(\Omega) \setminus \widetilde{H}^1(\Omega)$, and may contain infinitely many elements. Indeed, the uniqueness of an approximative trace is discussed in \cite{are2024perron} and is intensively studied in \cite{sauter2020uniq}, considering functions on the boundary only locally integrable for the measure $\mathcal{H}^{d-1}$ (counterexamples for this uniqueness are given in \cite[Example 4.3]{are2003lap} or \cite[end of Section 3]{bucur2010var}). As noticed in these works, this uniqueness property is related to the fact that the boundary of the domain may contain points that are not connected to $\Omega$, and are therefore not supporting the trace of the elements of $H^1(\Omega)$. We can remark that similar observations are made in \cite{shvartsman}, in order to define the trace of functions in the Sobolev space $W^{1,p}(\Omega)$ with $p > d$ on the boundary of general domains, introducing the notion of ``$\alpha$-accessible points''.

\medskip

We also observe that the $(d-1)$-dimensional Hausdorff measure on $\mathbb{R}^d$ appears in the integration-by-parts formula in the case of Lipschitz domains, and that generalisations can be obtained in the case of some more general domains (see finite perimeter sets and Gauss-Green theorem \cite[Section 5.8]{evans2015meas}). Nevertheless, the loss of uniqueness of the approximative trace when passing to the limit of sequence of continuous functions in the space $L^2(\partial \Omega, \mathcal{H}^{d-1})$ suggests that this measure becomes inappropriate for defining the topology on a set of functions on the boundary of more general domains, linked with an integration-by-parts formula. 
 
\medskip

{\it From the directional trace to the omnidirectional trace.}

\medskip

The approach we follow in this paper is entirely different. We first consider, for all directions $\theta \in \mathbb{S}^{d-1}$, where $\mathbb{S}^{d-1}$ denotes the unit sphere in $\mathbb{R}^d$, the space $W_\theta(\Omega)$ (defined in Section \ref{sec:traceund}) of all functions of $L^2(\Omega)$ whose derivative in the direction $\theta$, denoted by $\partial_\theta u$, also belongs to  $L^2(\Omega)$. Then, for any open bounded set $\Omega$, any element $u \in W_\theta(\Omega)$ has a directional trace on $\partial\Omega$, denoted in this paper $\gamma_\theta u$. The existence of a boundary operator follows from the fact that the directional derivative is a skew-symmetric operator (see Remark \ref{rem:skew}). The fact that it can be formulated as functions defined a.e. for the measure $\mu_\theta$, called the directional measure in the direction $\theta$ on the boundary (see Definition \ref{def:dirtrace}), is the extension to $W_\theta(\Omega)$ of results proved in \cite[Theorems 1 and 2]{mazya}, or in \cite[Theorem 2 p. 164]{evans2015meas}: any element of $W_\theta(\Omega)$ restricted to lines parallel to $\theta$ defines almost everywhere an element of $H^1(\omega)$ for some open set $\omega \subset \mathbb{R}$.

\medskip

Then the directional trace $\gamma_\theta u$ of $u$ in the direction $\theta$ is an element of the space $L^2(\partial\Omega,\mu_\theta)$, where the measure $\mu_\theta$ is supported by the subset of $\partial\Omega$ containing the points which are accessible from $\Omega$ in the direction  $\theta$ (see Definition \ref{def:dirtrace} and Appendix \ref{sec:propdirectionalmeasures} for the properties of these measures, in particular the link with the $(d-1)$-dimensional Lebesgue measure, denoted here by $\lambda^{d-1}$). We then remark that this support is much smaller than $\partial\Omega$ in cases where this boundary is irregular (think about 1D domains whose boundary is a Cantor set). It is worth noting that the following integration-by-parts formula always holds for all elements  $u,v\in W_\theta(\Omega)$:

\begin{equation}\label{eq:intpartintro}
    \int_\Omega \Big( u(x) \partial_\theta v(x) + v(x) \partial_\theta u(x) \Big) {\; \rm d}x = \int_{\partial\Omega} \frac{\gamma_{\theta}u(z) \gamma_{\theta}v(z) - \gamma_{\minus\theta}u(\Sym(z))\gamma_{\minus\theta} v(\Sym(z))}{|z- \Sym(z)|} {\ \rm d}\mu_\theta(z),
\end{equation}
where $\Sym(z)$ is the first intersection of $\partial\Omega$ with the half-line starting from $z$ in the direction $-\theta$. This integration-by-parts shows a surprising form: one cannot integrate the boundary function $\gamma_\theta u \cdot \gamma_\theta v$ alone. The only integrable expression combines the values of the directional trace on two opposite points of chords parallel to $\theta$. The example~\ref{exa:cuspidal} of cuspidal domains provides an explanation for this fact: in severe cusps, the function $u$ may tend to infinity sharply  (such questions are discussed in \cite{poborchi}), but the fact that $\partial_\theta u$ also belongs to $L^2(\Omega)$ implies that the combination of values at opposite points provides integrable expressions. 

\medskip

Nevertheless, our goal is to define a trace operator for the elements of  $H^1(\Omega)$, which are elements of $W_\theta(\Omega)$ for all direction  $\theta \in \mathbb{S}^{d-1}$. It is then natural to introduce in Section~\ref{sec:tracehun} the set $H^1_{\mathrm{tr}}(\Omega)$ of all $u \in H^1(\Omega)$ for which there exists a class of functions, denoted $\mathrm{tr}(u)$, coinciding $\mu_\theta$-a.e.\ with $\gamma_\theta u$ for all $\theta \in \mathbb{S}^{d-1}$. For this reason, we call it the omnidirectional trace. As noticed above, in the case of domains with cracks, if $u$ is discontinuous across the crack, then the directional traces on the crack do not coincide, and such functions do not belong to $H_{\rm tr}^1(\Omega)$, defined as the domain of the trace operator ${\rm tr}$. 
A key tool in this work is to use the fact that an $H^1(\Omega)$ function is a 1D $H^1$ function on almost all lines which are parallel to a given direction. The continuous representative of such a function enables the definition of its trace (in this direction) on the boundary of $\Omega$.
In this spirit, we don't obtain any additional information from the quasi-continuity property of the elements of $H^1(\Omega)$ in the sense of \cite{evans2015meas,mazya}: the Lebesgue points at the boundary which are naturally involved in our framework are the directional ones, available almost everywhere and not quasi-everywhere (see Theorems \ref{thm:lebpoints} and \ref{thm:omnilebpoints}).

\medskip

For $u\in H^1_{\mathrm{tr}}(\Omega)$, we show that ${\rm tr}(u)$ is an element of the space $L^2(\partial\Omega,(\mu_\theta)_{\theta\in\mathbb{S}^{d-1}})$, defined as the set of all equivalence classes of functions belonging to all the $\mathcal{L}^2(\partial\Omega,\mu_\theta)$, whose norm in this space is bounded independently of $\theta$ (and not almost all the $\mathcal{L}^2(\partial\Omega,\mu_\theta)$,  see Definition \ref{def:tracemono}). Using the fact that the space $L^2(\partial\Omega,(\mu_\theta)_{\theta\in\mathbb{S}^{d-1}})$ is a Banach space (see Appendix \ref{sec:intfam}), we prove Theorem \ref{thm:huntclosed} which states that $H_{\rm tr}^1(\Omega)$ is a Hilbert space. It can then be used in Section \ref{sec:proptrace} to give the space $H^{1/2}(\partial\Omega)$ the structure of a Hilbert space. Variational problems in general domains are then naturally formulated in $H_{\rm tr}^1(\Omega)$, since the trace of $u \in H_{\rm tr}^1(\Omega)$, equal to the directional traces by construction, satisfies the integration by parts formula \eqref{eq:intpartintro} for all direction $\theta$.

\medskip

A major consequence of the closedness of $H^1_{\mathrm{tr}}(\Omega)$ in $H^1(\Omega)$ is that it contains $\widetilde{H}^1(\Omega)$, the closure of $H^1(\Omega) \cap C^0(\overline{\Omega})$ in $H^1(\Omega)$. This shows that the limit, in $L^2(\partial\Omega, (\mu_\theta)_{\theta \in \mathbb{S}^{d-1}})$, of the restriction to $\partial\Omega$ of functions in $H^1(\Omega) \cap C^0(\overline{\Omega})$ always exists, is always unique, and can always be used in an integration-by-parts formula and in variational problems. We also prove, in the one-dimensional case (see Section~\ref{sec:oned}), that $H^1_{\mathrm{tr}}(\Omega) = \widetilde{H}^1(\Omega)$ whenever $\Omega \subset \mathbb{R}$. However, we construct an example of a two-dimensional domain $\Omega$ and an element of $H^1_{\mathrm{tr}}(\Omega)$ that does not belong to $\widetilde{H}^1(\Omega)$ (see Lemma~\ref{lem:bicone}).

\medskip

A second consequence is that, when $\widetilde{H}^1(\Omega) = H^1(\Omega)$, then $H_{\rm tr}^1(\Omega) = H^1(\Omega)$. This occurs in the following situations: in the case where $\Omega$ is a Lipschitz domain, which implies that the trace constructed in this paper coincides with the standard trace; in the more general case where $\Omega$ has a continuous boundary (in the sense that continuous local charts exist) \cite[Theorem 2 p.11]{mazya}; or in the case where $\Omega$ is bounded by a Jordan curve \cite[Theorem 1 p. 256]{lewis}. This includes, for example, the Koch snowflake domain \cite{kaz2024koch} and many other domains with fractal boundaries — cases where $\mathcal{H}^{d-1}$ is locally infinite.

\medskip

{\it To avoid risks of confusion with any ordered pair of values, we denote in this paper the open real intervals by $]a,b[$ rather than $(a,b)$.}

\section{Directional traces}\label{sec:traceund}

\subsection{Definition and existence of the directional traces}\label{sec:defexisttraceund}
 
For a given $\theta\in \mathbb{S}^{d-1}$, the directional derivative along $\theta$ of any function $\varphi\in C^\infty_c(\Omega)$, denoted in this paper by $\partial_\theta \varphi$, is defined by
\begin{equation}\label{eq:defdirder} \forall x \in \Omega, \quad \partial_\theta \varphi(x) 
	= \lim_{h\to 0} \frac {\varphi(x+h\theta)-\varphi(x)} h
	= \theta \cdot \nabla \varphi(x), \end{equation}
using the Euclidean scalar product in $\mathbb{R}^d$. For all $u\in L^2(\Omega)$, we say that $u$ admits a \textit{directional derivative along $\theta$ in the weak sense} if there exists $v\in L^2(\Omega)$ such that
\begin{equation}\label{eq:defadj}
	\forall \varphi \in C^\infty_c(\Omega), \quad
	\int_\Omega u(x) \partial_\theta \varphi(x) {\; \rm d}x 
		= - \int_\Omega v(x) \varphi(x) {\; \rm d}x.
\end{equation}
In this case, $v$ is uniquely defined, and we denote by $\partial_\theta   u = v$. Note that, in the case of $u \in  C^\infty_c(\Omega)$, we retrieve the same function as the one defined by \eqref{eq:defdirder}. We then define
\[ W_\theta(\Omega) 
	= \{ u \in L^2(\Omega) : \partial_\theta u \in L^2(\Omega) \}. \]
Then $W_\theta(\Omega)$ is a Hilbert space, with the scalar product
\[ \langle u,v \rangle_\theta = \int_\Omega \big(u(x) v(x) + \partial_\theta u(x) \partial_\theta v(x)\big) {\; \rm d}x. \]
For all $\theta \in \mathbb{S}^{d-1}$, we denote by $\mathcal{P}_\theta : x \mapsto x - (x\cdot\theta) \theta$ the orthogonal projection on the hyperplane $H_\theta = \{y \in \mathbb R^d : y \cdot \theta = 0\}$ orthogonal to $\theta$ passing through $0$. Moreover, for all $x \in \Omega$, let
\[ \delta_\theta(x) = \sup\{s \ge 0 : \forall t \in [0,s[, \ x+t\theta \in \Omega\}, \quad \Phi_\theta(x) = x + \delta_\theta(x) \theta \in \partial\Omega, \quad  \ell_\theta(x) = \delta_\theta(x) + \delta_{\minus\theta}(x). \]

Let $\partial_\theta\Omega =  \Phi_\theta(\Omega) \subset \partial\Omega$ be the part of the boundary that the interior of $\Omega$ sees in the direction $\theta$. Note that the functions $\Phi_\theta$, $\Phi_{\minus\theta}$ and $\ell_\theta$ are each constant on the open segment $\{x+s\theta, -\delta_{\minus\theta}(x)<s<\delta_\theta(x)\}$. If we denote by $z \in \partial_\theta\Omega$ the point $\Phi_\theta(x)$ and by $\widehat z \in \partial_{\minus\theta}\Omega$ the point $\Phi_{\minus\theta}(x)$, then we can uniquely extend by continuity the functions $\Phi_{\minus\theta}$ and $\ell_{\minus\theta}$ to the point $z$, by defining
\begin{equation}\label{eq:defphi}
    \Sym : \begin{cases} \partial_\theta \Omega & \to \partial_{\minus\theta} \Omega \\ \;\; z & \mapsto \;\; \widehat z, \end{cases} \quad
    \hbox{ and } \quad
    \ell_{\minus\theta} : \begin{cases} \partial_\theta \Omega & \to ]0,+\infty[ \\ \;\; z & \mapsto \;\; |\widehat z - z|. \end{cases}
\end{equation}
The map $\Sym$ is bijective (see Lemma \ref{lem:visavis}), and we deduce that $\ell_\theta(\Sym(z)) = \ell_{\minus\theta}(z)$. To finish, let us define
\[ \omega_\theta(y) = \{s \in \mathbb{R} : s\theta+y \in \Omega\} \subset \mathbb{R}, \quad y \in H_\theta. \]
Note that, for all $y \in \mathcal{P}_\theta(\Omega)$, $\omega_\theta(y)$ is a non-empty open subset of $\mathbb{R}$. We then denote by  $\mathcal{I}_{\theta}(y)$ the non-empty countable set of disjoint bounded open intervals such that $\omega_\theta(y) = \bigcup_{]\alpha,\beta[\in \mathcal{I}_{\theta}(y)} ]\alpha,\beta[$. Some properties of these objects are detailed in Appendix \ref{sec:propdirectionalmeasures}. \medskip

Let us observe that $\Omega = \{y + s\theta : y \in H_\theta, s \in \omega_\theta(y)\}$, and
\[ W_\theta(\Omega) = W_{\minus\theta}(\Omega), \quad
	\partial_{\minus\theta} = -\partial_\theta, \quad
	H_\theta = H_{\minus\theta}, \quad
	\mathcal{P}_\theta = \mathcal{P}_{\minus\theta}, \quad
	\langle u,v\rangle_\theta =\langle u,v\rangle_{\minus\theta}. \]
Also, note that $]\alpha,\beta[ \ \in \mathcal{I}_{\theta}(y)$ if and only if $]-\beta,-\alpha[ \ \in \mathcal{I}_{\minus\theta}(y)$. Finally, notice that $\partial_\theta\Omega$ and $\partial_{\minus\theta}\Omega$ are different in the general case, even though it is possible to have $\partial_\theta\Omega =  \partial_{\minus\theta}\Omega$, for instance in 1D (with $\theta \in \mathbb{S}^0 = \{-1,1\}$), $\Omega = (\bigcup_{n\in\mathbb{N}^\star} ]-1+\frac 1 {n+1}, -1+\frac 1 n[) \cup (\bigcup_{n\in\mathbb{N}^\star} ]1-\frac 1 n, 1-\frac 1 {n+1}[) \subset \mathbb R$.
 
\begin{definition}[Directional trace in the direction $\theta$]\label{def:dirtrace}
	Let $u\in W_\theta(\Omega)$. We say that a function $g : \partial\Omega \to \mathbb{R}$ is a directional trace of $u$ in the direction $\theta$ if there exists a representative of $u$, again denoted by $u$, and there exists a set $A \subset \mathcal{P}_\theta(\Omega)$ such that
	\begin{equation} \label{eq:deftradir}\left. \begin{aligned}
		1. \quad & \mathcal{P}_\theta(\Omega) \setminus A \text{ is } \lambda^{d-1}\text{-negligible}, \\
		2. \quad & \forall y \in A, \  u_{\theta,y} : s \ \mapsto u(s\theta+y) \text{ is an element of } H^1(\omega_\theta(y)), \\
		&\phantom{\forall y \in A, } \hbox{and } \ \forall \ ]\alpha,\beta[ \ \in \mathcal{I}_{\theta}(y), \ u_{\theta,y}\in C^0(]\alpha,\beta])\hbox{ and }g(\beta\theta+y) = u_{\theta,y}(\beta).
	\end{aligned} \right\} 
	\end{equation}
	 We then define the directional trace of $u$ in the direction $\theta$, denoted by $\gamma_\theta u$, as the set of all directional traces of $u$ in the direction $\theta$ in the preceding sense.
\end{definition}
\begin{remark} Using \eqref{eq:valbeta} in Lemma \ref{lem:fhun}, the left-continuity of $u_{\theta,y}$ in $\beta$ leads to:
\[
u_{\theta,y}(\beta) =  \frac 1 {\beta - \alpha}\int_\alpha^\beta (u_{\theta,y}(t) + (t-\alpha) u_{\theta,y}'(t)) {\rm d}t.
\]
If there exist reals $\alpha_1<\alpha_2<\alpha_3$ such that $]\alpha_1,\alpha_2[ \; \in \mathcal{I}_{\theta}(y)$ and  $]\alpha_2,\alpha_3[ \; \in \mathcal{I}_{\theta}(y)$, the function $u_{\theta,y}$ may be discontinuous in $\alpha_2$.
\end{remark}

The following theorem guarantees that the set of directional traces of $u$ in the direction $\theta$ is not empty.

\begin{theorem}[Existence of a directional trace]
    Let $u \in W_\theta(\Omega)$. There exists a directional trace $g :\partial\Omega \to \mathbb{R}$ of $u$ in the direction $\theta$ in the sense of Definition \ref{def:dirtrace}, which moreover is measurable for the Borel $\sigma$-algebra.
\end{theorem}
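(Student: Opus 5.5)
Without loss of generality we rotate coordinates so that $\theta = e_d$, writing $x = y + s\theta$ with $y \in H_\theta \simeq \mathbb{R}^{d-1}$ and $s\in\mathbb{R}$. The proof then proceeds in three steps.

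\textbf{Step 1: ACL characterization.} We first prove the absolutely-continuous-on-lines property in the single direction $\theta$, following the classical argument for $H^1$ (as in \cite[Theorem 2 p.~164]{evans2015meas}). We mollify $u$ only in a localized way, or better, work directly with Fubini. Extend $u$ and $v=\partial_\theta u$ by zero outside $\Omega$. Take test functions $\varphi(y+s\theta)=\psi(y)\chi(s)$ with $\psi\in C^\infty_c(\mathbb{R}^{d-1})$ and $\chi\in C^\infty_c(\mathbb{R})$, both supported so that the product is supported in $\Omega$. Plugging these into \eqref{eq:defadj} and using Fubini, we want that for $\lambda^{d-1}$-a.e.\ $y$ and every $\chi\in C^\infty_c(\omega_\theta(y))$,
\[
\int u_{\theta,y}\chi' = -\int v_{\theta,y}\chi .
\]
The difficulty is that $\omega_\theta(y)$ varies with $y$. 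We handle this with a countable family: take all intervals $J=]a,b[$ with rational endpoints and all rational boxes $Q\subset\mathbb{R}^{d-1}$ such that $Q\times \overline J\subset\Omega$. For each such pair, and for $\chi$ ranging over a countable set dense in $C^1_c(J)$, the identity holds for a.e.\ $y\in Q$, since $\psi$ ranges over $C_c^\infty(Q)$ and we may take Lebesgue points in $y$.

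For fixed $y$, any compact $K\subset\omega_\theta(y)$ is covered by finitely many such $J$ with $y\in Q$, because $\Omega$ is open and $\{y\}\times K$ is compact. Hence, off a null set, $u_{\theta,y}$ has weak derivative $v_{\theta,y}$ on $\omega_\theta(y)$. Fubini also gives $u_{\theta,y},v_{\theta,y}\in L^2(\omega_\theta(y))$ for a.e.\ $y$. This defines the set $A$ and yields $u_{\theta,y}\in H^1(\omega_\theta(y))$.

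\textbf{Step 2: representative and boundary values.} On each interval $]\alpha,\beta[\in\mathcal{I}_\theta(y)$, a function in $H^1(]\alpha,\beta[)$ has a representative continuous on $[\alpha,\beta]$ (bounded interval), given by Lemma \ref{lem:fhun}. Concretely, for $y \in A$ and $s \in ]\alpha,\beta]$, we define
\[
\tilde u(y+s\theta)=\frac{1}{\beta-\alpha}\int_\alpha^\beta\big(u_{\theta,y}(t)+(t-\alpha)u'_{\theta,y}(t)\big)\,dt-\int_s^\beta v_{\theta,y}(t)\,dt .
\]
We set $\tilde u=u$ elsewhere; this changes $u$ only on a null set by Fubini. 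We then set $g(\beta\theta+y)=\tilde u_{\theta,y}(\beta)$ for $y\in A$, and $g=0$ elsewhere on $\partial\Omega$. This is well defined: each point $z\in\partial_\theta\Omega$ equals $\beta\theta+y$ for exactly one pair $(y,\beta)$, since $y=\mathcal{P}_\theta z$ and $\beta=z\cdot\theta$.

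\textbf{Step 3: Borel measurability (the main obstacle).} The subtle points are twofold: $A$ must be chosen Borel, shrinking it to a Borel subset of full measure, and $\tilde u$ must be taken from a Borel representative of $u$, $v$. I would write
\[
g(z)=\lim_{n\to\infty} n\int_{0}^{1/n} \tilde u(z - t\theta)\,dt
\]
for $z\in\partial_\theta\Omega$ with $\mathcal{P}_\theta z\in A$, and $g(z)=0$ otherwise. The integrand is Borel in $(z,t)$, so each average is Borel in $z$ by Fubini--Tonelli applied to the Borel functions $u,v$. The limit exists and equals $\tilde u_{\theta,y}(\beta)$ by continuity at $\beta$, since $z-t\theta\in\Omega$ for small $t$.

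It remains to check that the set $\{z\in\partial\Omega: \mathcal{P}_\theta z\in A,\ z\in\partial_\theta\Omega\}$ is Borel. The set $\partial_\theta\Omega$ is Borel because
\[
\partial_\theta\Omega=\bigcup_{n}\{z\in\partial\Omega: z-t\theta\in\Omega\ \ \forall t\in]0,1/n]\},
\]
and each set in this union is closed relative to $\partial\Omega$ up to taking rational $t$ together with openness of $\Omega$. More precisely, it is a $G_\delta$-type countable combination, and I expect this bookkeeping to be where care is needed. Finally, $g$ is a Borel combination of Borel pieces.
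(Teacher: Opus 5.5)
Your proposal is correct, up to one fixable detail in Step 3 discussed below. It follows a different route from the paper in both of its main steps.

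\textbf{Slicing step.} The paper works on each box of the rational cover $\mathcal{U}$ and mollifies only in the direction $\theta$. It then compares $u(\cdot,y)$ with the primitive $w(\cdot,y)$ of $\partial_\theta u(\cdot,y)$ and proves that their difference is constant for a.e.\ $y$. You instead test \eqref{eq:defadj} with products $\psi(y)\chi(s)$ and apply the fundamental lemma in $y$ for each $\chi$ of a countable $C^1$-dense family. You then glue the pairs $(Q,J)$ by a finite cover and a partition of unity of ${\rm supp}\,\chi\subset\omega_\theta(y)$. Your argument is shorter, and it makes explicit the passage from the boxes to all of $\omega_\theta(y)$, which the paper states without detail. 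The paper's argument has the advantage of producing the absolutely continuous (primitive) representative directly.

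\textbf{Measurability step.} The paper expresses $g\circ\Phi_\theta$ on $\Omega$ through \eqref{eq:traceoned} and the lower semicontinuity of $\delta_{\pm\theta}$ (Lemma \ref{lem:measr}). You write $g$ directly on $\partial\Omega$ as the pointwise limit of the one-sided averages $n\int_0^{1/n}u(z-t\theta)\,{\rm d}t$ of a Borel representative. This gives Borel measurability on $\partial\Omega$ itself, and it is essentially the directional Lebesgue-point property of Theorem \ref{thm:lebpoints}. These averages can be taken of $u$ itself, since $\tilde u_{\theta,y}=u_{\theta,y}$ a.e.\ for $y\in A$. If you keep $\tilde u$ as a global representative, your Fubini argument needs $\tilde u$ to be measurable. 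That again uses Lemma \ref{lem:measr}, through the endpoints $\alpha$, $\beta$ viewed as functions of the point.

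\textbf{The detail to correct in Step 3.} The sets $\{z\in\partial\Omega : z-t\theta\in\Omega \ \forall t\in\,]0,1/n]\}$ are not relatively closed, and restricting to rational $t$ changes them. For instance, let $\Omega$ be $B(0,2)$ with the points $(2-\sqrt2/k,0)$, $k\ge 1$, removed, and let $\theta=e_1$. Then $z=(2,0)$ passes the rational test for every $n$, but it is not in $\partial_\theta\Omega$.

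The correct argument is that of Lemma \ref{lem:dthetaomborel}. By compactness of $[1/m,1/n]$ and openness of $\Omega$, the set $\bigcap_{t\in[1/m,1/n]}(\Omega+t\theta)$ is open, so each of your sets is a relative $G_\delta$.

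You can also avoid this issue altogether, together with the need for a Borel $A$. Definition \ref{def:dirtrace} only constrains $g$ at the right endpoints $y+\beta\theta$ with $y\in A$, and there your averages converge. So take $u$ Borel and extended by zero, and set $g(z)=\limsup_n n\int_0^{1/n}u(z-t\theta)\,{\rm d}t$, replaced by $0$ where it is not finite. This is a Borel directional trace on all of $\partial\Omega$.
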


This theorem as well as the subsequent results remain valid upon replacing $\theta$ by $-\theta$.

\begin{proof}
 Up to a change of coordinates and the use of an isometry from $\mathbb R^{d-1}$ to $H_\theta$, we treat the case $\theta = e_1 = (1, 0, \cdots, 0)$. Let $u \in W_{e_1}(\Omega)$. 
 
 \medskip

 We use the same notation for representatives of $u$ and $\partial_{e_1}  u$ such that $u\in {\mathcal L}^2(\Omega )$ and  $\partial_{e_1}  u\in {\mathcal L}^2(\Omega )$. For proving the theorem, let us show that, for a.e. $y \in \mathcal{P}_{e_1}(\Omega)$, the functions $u_{e_1,y}:=u(\cdot,y)$ and $\partial_{e_1} u(\cdot,y) $ are such that
\begin{align}
& u_{e_1,y} \in L^2(\omega_1(y)),\label{eq:itemun}
\\
& \partial_{e_1} u(\cdot,y)\in L^2(\omega_1(y)),\label{eq:itemdeux}
\\
&u_{e_1,y}' = \partial_{e_1} u(\cdot,y).\label{eq:itemtrois}
\end{align}

\medskip

Extending $u$ by $0$ outside $\Omega $ and applying Fubini-Tonelli's theorem, we can write
\[
\int_{{\mathbb R}^{d-1}} \Big( \int_{\mathbb R} u(x,y)^2 {\rm d}x \Big) {\rm d}y =\int_{\Omega } u(z)^2 {\; \rm d}z < +\infty.
\]
A more precise consequence of Fubini-Tonelli's theorem is that the mapping $y \mapsto \int_{\mathbb R} u(x,y)^2 {\; \rm d}x$ is a measurable (in the sense that it is a.e. equal to a measurable function) and integrable mapping from ${\mathbb R}^{d-1}$ to ${\mathbb R}$. We therefore have, for a.e.  $y\in\mathcal{P}_{e_1}(\Omega)$, $u(\cdot,y) \in L^2(\omega_{e_1}(y))$. This concludes the proof of \eqref{eq:itemun}.

The same reasoning, applied to  $\partial_{e_1} u$, provides the proof of \eqref{eq:itemdeux}.

\medskip

Define $\mathcal{U}$ by
 \[ \mathcal{U} = \left\{\prod_{k=1}^d \ ]\alpha_k, \beta_k[ \ \subset \Omega : \alpha_k< \beta_k \in \mathbb Q \right\}. \]
 Then $\mathcal{U}$ is a countable (since $\mathbb Q$ is countable) cover of $\Omega$. Let $K \in \mathcal{U}$. We can write
 \[ K = \ ]\alpha_1, \beta_1[ \ \times L, \quad
     L = \prod_{k=2}^d \ ]\alpha_k, \beta_k[. \]
 Let $A = \{y \in L : u(\cdot, y), \ \partial_{e_1}u(\cdot, y) \in L^2(\omega_{e_1}(y))\}$ and $\rho \in C^\infty_c(\mathbb R)$ such that
 \[ \rho \ge 0, \quad \big(|x| > 1 \implies \rho(x) = 0\big), \quad \int_{\mathbb R} \rho(x) \ {\rm d} x = 1. \]
 For $n \in \mathbb{N}$ and $y \in A$, we define, for all $x\in \; ]\alpha_1, \beta_1[$,
 \[ u_{n,y}(x) = (u(\cdot, y) * \rho_n)(x) = \int_{\alpha_1}^{\beta_1} u(s,y)\rho_n(x-s) \ {\rm d} s, \quad \text{ where } \quad \rho_n(x) = n\rho(nx). \]
 Using convolution properties, we have $u_{n,y} \in C^\infty(]\alpha_1, \beta_1[)$, and
 \[ u_{n,y}'(x) = \int_{\alpha_1}^{\beta_1} u(s,y) \rho_n'(x-s) \ {\rm d} s. \]
 If $y \not\in A$, we set $u_{n,y} = 0$. Let $m \in \mathbb{N}$ be such that $m> \frac 2 {\beta_1 - \alpha_1}$, $J_m = \ ]\alpha_1 + \frac 1 m, \beta_1 - \frac 1m[$, $\varphi \in C^\infty_c(L)$, and $\psi(s,y) = \rho_n(x-s)\varphi(y)$. Then for all $x \in J_m$ and $n > 2m$, the support of $\psi$ is a subset of $K$, and
 \begin{align*}
     \int_L u_{n,y}'(x) \varphi(y) \ {\rm d} y
     & = \int_K u(s,y) \rho_n'(x-s) \varphi(y) \ {\rm d} s \ {\rm d} y \\
     & = -\int_K u(s,y) \partial_{e_1}\psi(s,y) \ {\rm d} s \ {\rm d} y
     = \int_K \partial_{e_1} u(s,y) \psi(s,y) \ {\rm d} s \ {\rm d} y,
 \end{align*}
 using the integration-by-parts formula. Repeating this step with
 \[ w(x,y) = \int_{\alpha_1}^{x} \partial_{e_1}u(s,y) \ {\rm d} s \quad
     \text{ and } \quad w_{n,y}(x) = (w(\cdot, y) * \rho_n)(x) = \int_{\alpha_1}^{\beta_1} w(s,y)\rho_n(x-s) \ {\rm d} s, \]
 we obtain (once again using the integration-by-parts formula),
 \[ w_{n,y}'(x) = \int_{\alpha_1}^{\beta_1} w(s,y) \rho_n'(x-s) \ {\rm d} s = \int_{\alpha_1}^{\beta_1} \partial_{e_1}u(s,y) \rho_n(x-s) \ {\rm d} s, \]
 so that
 \begin{align*}
     \int_L w_{n,y}'(x) \varphi(y) \ {\rm d} y
     = \int_K \partial_{e_1}u(s,y) \psi(s,y) \ {\rm d} s \ {\rm d} y
     = \int_L u_{n,y}'(x) \varphi(y) \ {\rm d} y.
 \end{align*}
 Hence, $(w_{n,y} - u_{n,y})'(x) = 0$ for a.e. $y \in L$, so there exists a $\lambda^{d-1}$-negligible set $E_x \subset L$ such that $(w_{n,y} - u_{n,y})'(x) = 0$ for all $y \in L \setminus E_x$. Let $E$ be the union of the $E_x$, with $x \in \mathbb Q \cap J_m$. Then $E$ is negligible (because $\mathbb Q$ is countable, so $E$ is a countable union of negligible sets), and for all $y \not\in E$ and $x \in \mathbb Q \cap J_m$, we have $(w_{n,y} - u_{n,y})'(x) = 0$. However, $\mathbb Q \cap J_m$ is a dense subset of $J_m$, and $w_{n,y} - u_{n,y}$ is a $C^\infty$ map on $]\alpha_1, \beta_1[$. Thus,
 \[ \forall y \in L \setminus E, \; \forall x \in J_m, 
     \; (w_{n,y} - u_{n,y})'(x) = 0. \]
 Finally, for a.e. $y \in L$, $w_{n,y} - u_{n,y}$ is constant on $J_m$. Since $(\rho_n)_{n \ge 0}$ is a mollifier, we obtain the following identity (for the $L^2$ norm):
 \[ \lim_{n \to +\infty} (w_{n,y} - u_{n,y})
     = \lim_{n \to +\infty} [(w(\cdot, y) - u(\cdot, y)) * \rho_n]
     = w(\cdot, y) - u(\cdot, y). \]
 Hence, $w(\cdot, y) - u(\cdot, y)$ is constant on $J_m$. Letting $m \to +\infty$, the map is constant on $]\alpha_1, \beta_1[$, so there exists $c \in \mathbb R$ such that $u(\cdot, y) = w(\cdot, y) + c$. \medskip

 To conclude, for all $K = \ ]\alpha_{1,K}, \beta_{1,K}[ \ \times L_K \in \mathcal{U}$, there exists a $\lambda^{d-1}$-negligible set $F_K \subset L_K$ such that for all $y \in L_K \setminus F_K$, $u(\cdot, y) \in H^1(]\alpha_{1,K}, \beta_{1,K}[)$, and the weak derivative of $u(\cdot, y)$ on $]\alpha_{1,K}, \beta_{1,K}[$ is $\partial_{e_1}u(\cdot, y)$. Since $\mathcal{U}$ is countable, this implies that
 \[ F = \bigcup_{K \in \mathcal{U}} F_K \subset \mathcal P_{e_1}(\Omega) \]
 is also a $\lambda^{d-1}$-negligible set. 

 \medskip
 
 Then, for all $y \in \mathcal P_{e_1}(\Omega) \setminus F$, we have
 \[ \forall K \in \mathcal{U}, \ ]\alpha_{1,K}, \beta_{1,K}[ \ \subset \omega_{e_1}(y) \implies u(\cdot, y) \in H^1(]\alpha_{1,K}, \beta_{1,K}[), \]
 and the weak derivative of $u(\cdot, y)$ on $\omega_{e_1}(y)$ is $\partial_{e_1}u(\cdot, y)$. 
Using  $u(\cdot, y)$ and $\partial_{e_1}u(\cdot, y) \in L^2(\omega_{e_1}(y))$, we get that the function $u_{e_1,y}(s) = u(s, y)$ for a.e. $s \in \ \omega_{e_1}(y)$ is such that $u_{e_1,y}\in H^1(\omega_{e_1}(y))$ with $u_{e_1,y}' = \partial_{e_1}u(\cdot, y)$, which concludes the proof of \eqref{eq:itemtrois}.
 
 \medskip
 
 We select a representative of $u_{e_1,y}$, still denoted by $u_{e_1,y}$, which is continuous on $]\alpha,\beta]$ for all $]\alpha,\beta[ \ \in \mathcal{I}_{e_1}(y)$ and define $g : \partial\Omega \to \mathbb{R}$ by setting, for all $y \in \mathcal P_{e_1}(\Omega) \setminus F$ and all $]\alpha,\beta[ \ \in \mathcal{I}_{e_1}(y)$, 
 \begin{equation}\label{eq:tracebeta}
  g(\beta, y) = u_{e_1,y}(\beta) = \frac 1 {\beta - \alpha}\int_\alpha^\beta \Big( u_{e_1,y}(t) + (t-\alpha) u_{e_1,y}'(t) \Big) {\; \rm d}t,
 \end{equation}
 in which we use \eqref{eq:valbeta} in Lemma \ref{lem:fhun}.
 Prescribing $g(z) = 0$ for all other $z \in \partial\Omega$, $g$ is a directional trace of $u$ along $e_1$. 
 
 \medskip

 We then observe that, for all $s \in \ ]\alpha,\beta[$ and $x = (s,y)$, we have $\Phi_{\minus e_1}(x) = (\alpha, y)$, $\Phi_{e_1}(x) = (\beta, y)$ and $\beta-\alpha = \delta_{\minus e_1}(x)+\delta_{e_1}(x)$. Since \eqref{eq:tracebeta} may also be written as:
 \begin{equation}\label{eq:traceoned}
     g(\Phi_{e_1}(x)) = \frac 1 {\delta_{\minus e_1}(x)+\delta_{e_1}(x)}\int_{\minus\delta_{\minus e_1}(x)}^{\delta_{e_1}(x)} \Big(u(t, y) + (t+\delta_{\minus e_1}(x))\partial_{e_1} u(t, y)\Big){\ \rm d}t,
 \end{equation}
 the measurability of $g$ for the Borel $\sigma$-algebra follows from that of the functions $\delta_{e_1}$ and $\delta_{\minus e_1}$ proved in Lemma \ref{lem:measr}.
\end{proof}

In Definition \ref{def:dirtrace}, one may in fact choose any representative of $u$, as stated in the following lemma.

\begin{lemma}\label{lem:traceoned}
    Let $u\in  W_\theta(\Omega)$ and let $g:\partial\Omega\to\mathbb{R}$ be a directional trace in the direction $\theta$ in the sense of Definition \ref{def:dirtrace}. Let $u_1$ be a representative of $u$ and let $A_1\subset\mathcal{P}_\theta(\Omega)$ be such that \eqref{eq:deftradir} holds with $u=u_1$ and $A=A_1$. Then, for all representative $u_2$ of $u$, there exists $A_2\subset\mathcal{P}_\theta(\Omega)$ such that \eqref{eq:deftradir} holds with $u=u_2$ and $A = A_2$.
\end{lemma}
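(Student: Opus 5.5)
Since $u_1$ and $u_2$ are two representatives of the same element of $L^2(\Omega)$, the set $N=\{x\in\Omega : u_1(x)\neq u_2(x)\}$ is $\lambda^d$-negligible. I will reduce everything to the lines parallel to $\theta$, as in the proof of the existence theorem: after the same change of coordinates and isometry, I take $\theta=e_1$ and write $x=(s,y)$ with $y\in H_\theta\simeq\mathbb{R}^{d-1}$. Applying Fubini--Tonelli to the indicator function of $N$ (extended by $0$ outside $\Omega$) gives
\[
\int_{\mathbb{R}^{d-1}}\lambda^1\big(\{s\in\omega_\theta(y): s\theta+y\in N\}\big)\,{\rm d}y=\lambda^d(N)=0 .
\]
Hence there is a $\lambda^{d-1}$-negligible set $E\subset\mathcal{P}_\theta(\Omega)$ such that, for every $y\in\mathcal{P}_\theta(\Omega)\setminus E$, the functions $(u_1)_{\theta,y}$ and $(u_2)_{\theta,y}$ agree $\lambda^1$-a.e. on $\omega_\theta(y)$. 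A technical point is that $N$ is only guaranteed to be Lebesgue measurable. I will handle this by enlarging $N$ to a Borel negligible set, which makes Tonelli's theorem directly applicable.

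I then set $A_2=A_1\setminus E$. The complement $\mathcal{P}_\theta(\Omega)\setminus A_2$ is contained in $(\mathcal{P}_\theta(\Omega)\setminus A_1)\cup E$, so it is $\lambda^{d-1}$-negligible. This gives item 1 of \eqref{eq:deftradir}.

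For item 2, I fix $y\in A_2$. The function $(u_2)_{\theta,y}$ coincides a.e. with $(u_1)_{\theta,y}$, which lies in $H^1(\omega_\theta(y))$, so $(u_2)_{\theta,y}$ defines the same element of $H^1(\omega_\theta(y))$. On each $]\alpha,\beta[\,\in\mathcal{I}_\theta(y)$, the function $(u_1)_{\theta,y}$ is continuous on $]\alpha,\beta]$ with value $g(\beta\theta+y)$ at $\beta$. By Lemma~\ref{lem:fhun}, formula \eqref{eq:valbeta}, this value depends only on the $H^1$ class: it equals $\frac{1}{\beta-\alpha}\int_\alpha^\beta\big(v(t)+(t-\alpha)v'(t)\big)\,{\rm d}t$ for $v=(u_1)_{\theta,y}$, and this integral does not change when $(u_1)_{\theta,y}$ is replaced by $(u_2)_{\theta,y}$.

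The main obstacle is interpretive. Definition~\ref{def:dirtrace} literally requires $u_{\theta,y}\in C^0(]\alpha,\beta])$ for the chosen pointwise representative, and an arbitrary representative $u_2$ need not be continuous on every line. I plan to read item 2, as the existence proof implicitly does, as a statement about the $H^1(\omega_\theta(y))$ class of $u_{\theta,y}$: this class belongs to $H^1(\omega_\theta(y))$, its continuous representative on each $]\alpha,\beta]$ is the one that is meant, and its value at $\beta$ equals $g(\beta\theta+y)$. Under this reading, the continuous representatives of $(u_1)_{\theta,y}$ and $(u_2)_{\theta,y}$ on $]\alpha,\beta]$ are identical, because two continuous functions that agree a.e. on an interval agree everywhere. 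Therefore $g(\beta\theta+y)=(u_2)_{\theta,y}(\beta)$ for every $y\in A_2$, which proves the lemma.
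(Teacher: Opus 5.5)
Your proposal is correct and follows essentially the paper's argument. The paper also takes $A_2=A_1\cap B$, where $B$ is the full-measure set of lines on which $u_1$ and $u_2$ agree a.e. It then identifies $g(\beta\theta+y)$ with the value at $\beta$ of the common continuous representative on $]\alpha,\beta]$, which is exactly your reading of item 2; the Borel enlargement of $N$ and the extra justification via \eqref{eq:valbeta} are harmless refinements.
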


\begin{proof}
	Let $B \subset \mathcal{P}_\theta(\Omega)$, whose complement in $\mathcal{P}_\theta(\Omega)$ is $\lambda^{d-1}$-negligible, be such that, for all $y \in B$, $u_1(s\theta+y) = u_2(s\theta+y)$ for a.e. $s\in \omega_\theta(y)$. Then for all  $y\in A_2 := B \cap A_1$ (whose complement in  $\mathcal{P}_\theta(\Omega)$ is therefore $\lambda^{d-1}$-negligible), for all $]\alpha,\beta[ \ \in \mathcal{I}_{\theta}(y)$, the function $f : s \in \ ]\alpha,\beta[ \ \mapsto u_1(s\theta+y) \in \mathbb{R}$ is such that $f(s) = u_2(s\theta+y)$ for a.e. $s \in \ ]\alpha,\beta[$ and $f\in H^1(]\alpha,\beta[)$. Also denoting by  $f$ the continuous representative of $f$ on  $]\alpha,\beta]$, we have $g(\beta\theta+y) = f(\beta)$. Hence \eqref{eq:deftradir} holds with $u = u_2$ and $A = A_2$.
\end{proof}

\subsection{Directional traces and directional measures}\label{sec:directracemesure}

Let $\theta \in \mathbb{S}^{d-1}$. We now introduce a measure on $\partial\Omega$, called the \textit{directional measure} in the direction $\theta$, which plays an essential role in the functional properties of the directional trace.

\begin{definition}[Directional measure on the boundary]\label{def:dirmeasurebound}
	We define the measure $\mu_{\theta}$ on $\partial \Omega$ by
	\begin{equation}\label{eq:defmutheta}
		\mu_{\theta}(A) = \int_{\Omega} \chi_A( \Phi_\theta(x)) {\; \rm d}x,
		\text{ for all } A \in {\mathcal B}(\partial \Omega),
	\end{equation}
	where $\chi_A$ is the characteristic function of $A$ and ${\mathcal B}(\partial \Omega)$ is the set of all Borel sets of $\partial \Omega$ (that is the intersection with $\partial \Omega$ of Borel sets of $\mathbb R^d$). Then $\mu_{\theta}$ is a finite Borel measure, since $\mu_{\theta}(\partial\Omega) \le \lambda^d(\Omega) < +\infty$.
\end{definition}

The properties of this measure are detailed in Appendix \ref{sec:propdirectionalmeasures}. The following theorem provides a link between the notions of directional trace and directional measure.

\begin{theorem}[Directional traces and measure $\mu_\theta$]\label{thm:deftraceoned}
    Let $u \in W_\theta(\Omega)$, and let $g$ be a directional trace of $u$ in the direction $\theta$ in the sense of Definition \ref{def:dirtrace}. Any function $\widetilde{g}:\partial\Omega\to\mathbb{R}$ is a directional trace of  $u$ in the direction $\theta$ if and only if $\widetilde{g}(z) = g(z)$ for $\mu_\theta$-a.e. $z \in \partial\Omega$. As a consequence, the directional trace $\gamma_\theta u$ of $u$ in the direction $\theta$ is an equivalence class (for the relation $\mu_\theta$-a.e. equal) of a measurable function for the Borel $\sigma$-algebra. 
\end{theorem}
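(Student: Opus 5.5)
The plan is to reduce everything to the statement that a Borel set $N\subset\partial\Omega$ is $\mu_\theta$-negligible if and only if the set of lines parallel to $\theta$ whose right endpoints hit $N$ is $\lambda^{d-1}$-negligible. Precisely, we will show that for Borel $N$,
\[
\mu_\theta(N)=0 \iff \lambda^{d-1}\big(\{y\in\mathcal P_\theta(\Omega): \exists\, ]\alpha,\beta[\ \in\mathcal I_\theta(y),\ \beta\theta+y\in N\}\big)=0 .
\]
As in the existence proof, we first take $\theta=e_1$. Fubini then gives
\[
\mu_{e_1}(N)=\int_{\mathcal P_{e_1}(\Omega)}\ \sum_{]\alpha,\beta[\in\mathcal I_{e_1}(y)}(\beta-\alpha)\,\chi_N(\beta,y)\,{\rm d}y ,
\]
because $\Phi_{e_1}(s,y)=(\beta,y)$ for $s\in]\alpha,\beta[$. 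Each term has $\beta-\alpha>0$, so the integrand vanishes exactly when no right endpoint lies in $N$, which proves the equivalence. Measurability of the integrand comes from that of $\chi_N\circ\Phi_{e_1}$, via Lemma \ref{lem:measr}. Since $g$ need not be Borel, we will work with its Borel version from the existence theorem, or argue through outer measure where needed.

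\emph{Step 1 (sufficiency).} Suppose $\widetilde g=g$ $\mu_\theta$-a.e., and let $N\supset\{\widetilde g\neq g\}$ be a Borel $\mu_\theta$-null set. By the equivalence, the set $B$ of $y$ having some right endpoint in $N$ is $\lambda^{d-1}$-negligible. Fix a representative $u$ and the set $A$ for which $g$ satisfies \eqref{eq:deftradir}, and put $\widetilde A=A\setminus B$. For $y\in\widetilde A$ and every $]\alpha,\beta[\ \in\mathcal I_\theta(y)$ we get $\widetilde g(\beta\theta+y)=g(\beta\theta+y)=u_{\theta,y}(\beta)$. Hence $\widetilde g$ is a directional trace.

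\emph{Step 2 (necessity).} Suppose $g$ and $\widetilde g$ are both directional traces, with possibly different representatives $u_1,u_2$ and sets $A_1,A_2$. By Lemma \ref{lem:traceoned} we may use the same representative $u$ for both, with sets $A$ and $\widetilde A$. For $y\in A\cap\widetilde A$, the function $u_{\theta,y}$ is the same element of $H^1(]\alpha,\beta[)$ on each interval, and its continuous extension to $]\alpha,\beta]$ is unique. Therefore $g(\beta\theta+y)=u_{\theta,y}(\beta)=\widetilde g(\beta\theta+y)$ at every right endpoint. Consequently the set $\{\widetilde g\neq g\}$ meets no right endpoint over $y\in A\cap\widetilde A$.

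\emph{Step 3 (concluding necessity).} Write $D=\{g\neq\widetilde g\}$. By the Fubini formula above, $\Phi_\theta^{-1}(D)\subset\{x\in\Omega:\mathcal P_\theta(x)\notin A\cap\widetilde A\}$, and the latter is $\lambda^d$-null. If $D$ is not Borel, this still shows that $D$ is contained in the $\mu_\theta$-null Borel set $\partial\Omega\setminus\Phi_\theta(\{\mathcal P_\theta x\in A\cap\widetilde A\})$. To make this rigorous, we will take a Borel subset $A'\subset A\cap\widetilde A$ of full measure and use the Borel set $\Phi_\theta(\mathcal P_\theta^{-1}(A')\cap\Omega)$; its measurability is the main obstacle. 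To get around it, we will instead use the $\lambda^d$-null Borel set $M\supset\mathcal P_\theta^{-1}(\mathcal P_\theta(\Omega)\setminus A')\cap\Omega$ and define $D'=\{z\in\partial\Omega:\Phi_\theta^{-1}(z)\subset M\}$. We will show $D'$ is Borel (via Lemma \ref{lem:measr}) and $\mu_\theta$-null, or else we simply interpret ``$\mu_\theta$-a.e.'' through the completion of $\mu_\theta$.

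Finally, the consequence follows directly. The directional traces are exactly the functions $\mu_\theta$-a.e. equal to the Borel function $g$ given by the existence theorem, so $\gamma_\theta u$ is one equivalence class of a Borel measurable function.
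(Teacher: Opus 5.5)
Your Steps 1 and 2 are correct and follow the paper's proof: your Step 1 is the paper's Step 2, and your Step 2 is the start of the paper's Step 1. Obtaining the measurability of the set of lines with a right endpoint in a Borel $N$ as the positivity set of the Tonelli integrand is a clean substitute for the paper's appeal to the measurability of projections.

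The gap is Step 3, and it is a real one. Definition \ref{def:dirtrace} puts no constraint on $g$ or $\widetilde g$ off $\partial_\theta\Omega$. So $D=\{g\neq\widetilde g\}$ may contain all of $\partial\Omega\setminus\partial_\theta\Omega$, and you must show that this set lies in a Borel $\mu_\theta$-null set. None of your options does this:
\begin{itemize}
\item The observation that $\Phi_\theta^{-1}(D)$ is $\lambda^d$-null only makes $D$ null for the push-forward of $\lambda^d$ on the $\sigma$-algebra of sets with Lebesgue-measurable preimage. That is not the Borel measure $\mu_\theta$.
\item Retreating to the completion of $\mu_\theta$ changes nothing. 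Completion-null means contained in a Borel $\mu_\theta$-null set, which is the claim itself.
\item The replacement sets are forward images under $\Phi_\theta$: $\Phi_\theta(\Omega\cap\mathcal P_\theta^{-1}(A'))=\partial_\theta\Omega\cap\mathcal P_\theta^{-1}(A')$ and $\partial\Omega\setminus D'=\Phi_\theta(\Omega\setminus M)$. The Borel measurability of $\Phi_\theta$ coming from Lemma \ref{lem:measr} controls preimages, not images, so ``we will show $D'$ is Borel'' has no argument behind it.
\end{itemize}
Every variant reduces to the same unanswered question: is $\partial_\theta\Omega$ Borel?

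That is exactly the missing ingredient, Lemma \ref{lem:dthetaomborel}. Your instinct to use Lemma \ref{lem:measr} can in fact deliver it. One checks that
\[
\partial_\theta\Omega=\partial\Omega\cap\bigcup_{q\in\mathbb{Q},\ q>0}\Big(q\theta+\{x\in\Omega:\ \delta_\theta(x)\ge q\}\Big),
\]
and each set $\{\delta_\theta\ge q\}=\bigcap_{n\ge 1}\{\delta_\theta>q-1/n\}$ is a $G_\delta$ by lower semicontinuity.

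With this in hand, Step 3 becomes short:
\begin{itemize}
\item Choose a Borel $\lambda^{d-1}$-null set $E\supset\mathcal P_\theta(\Omega)\setminus(A\cap\widetilde A)$.
\item By your Step 2, $D\subset(\partial\Omega\setminus\partial_\theta\Omega)\cup(\partial\Omega\cap\mathcal P_\theta^{-1}(E))$, and both sets are Borel.
\item The first set has $\mu_\theta$-measure $0$ because $\Phi_\theta(\Omega)=\partial_\theta\Omega$ (Corollary \ref{cor:negl2}).
\item The second has $\mu_\theta$-measure $\lambda^d(\Omega\cap\mathcal P_\theta^{-1}(E))=0$, because $\mathcal P_\theta\circ\Phi_\theta=\mathcal P_\theta$.
\end{itemize}
No Borel subset $A'$ and no auxiliary set $D'$ are needed.
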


\begin{proof}
    \uline{Step 1}: We show that all directional traces are $\mu_\theta$-a.e. equal. \medskip
    
    Let $g$ and $\widetilde{g}$ be two directional traces of $u$ in the direction $\theta$ in the sense of Definition \ref{def:dirtrace}. Let $u$ again denote a representative of $u$, and let $A$ (resp. $\widetilde{A}$) be a subset of $\mathcal{P}_\theta(\Omega)$ such that \eqref{eq:deftradir} hold for $g$, $u$ and $A$ on one hand, $\widetilde{g}$, $u$ and $\widetilde{A}$ on the other hand (owing to Lemma \ref{lem:traceoned}, we can consider the same representative of $u$ for satisfying \eqref{eq:deftradir} with $g$ and $\widetilde{g}$). Then the complement of $A \cap \widetilde{A}$ in $\mathcal{P}_\theta(\Omega)$ is $\lambda^{d-1}$-negligible, and, for all $y \in A \cap \widetilde{A}$ and for all $]\alpha,\beta[ \ \in \mathcal{I}_{\theta}(y)$, then $g(\beta\theta+y) = \widetilde{g}(\beta\theta+y)$. Using Lemma \ref{lem:partialthetaboundary} and Corollary \ref{cor:negl2}, this implies that $g(z) = \widetilde{g}(z)$ for $\mu_\theta$-a.e. $z \in \partial\Omega$. \medskip
    
    \uline{Step 2}: We show that any function $\mu_\theta$-a.e. equal to a directional trace is a directional trace.
    
    \medskip
    Let $g$ be a directional trace of $u$ in the direction $\theta$. Let  $\widetilde{g}:\partial\Omega\to\mathbb{R}$ be such that $g(z) = \widetilde{g}(z)$ for $\mu_\theta$-a.e. $z\in\partial\Omega$. We then define
    \[ \widehat{A} = \{y \in \mathcal{P}_\theta(\Omega) : \forall \ ]\alpha,\beta[ \ \in \mathcal{I}_\theta(y), \ g(\beta\theta + y) = \widetilde{g}(\beta\theta + y)\}. \] 
    From Lemma \ref{lem:negl}, we get that the complement of $\widehat{A}$ in $\mathcal{P}_\theta(\Omega)$ is $\lambda^{d-1}$-negligible. 
    Considering a representative of $u$ again denoted by $u$, let $A$ be a subset of $\mathcal{P}_\theta(\Omega)$ such that \eqref{eq:deftradir} is satisfied for $g$, $u$ and $A$. Then $\mathcal{P}_\theta(\Omega) \setminus (A \cap \widehat{A})$ is $\lambda^{d-1}$-negligible, and for all $y\in A\cap \widehat{A}$, we have $u_{\theta,y}\in H^1(\omega_\theta(y))$. Hence, for all $]\alpha,\beta[ \ \in \mathcal{I}_{\theta}(y)$, since $\beta\theta+y \in \partial_\theta\Omega$ and $y\in \widehat{A}$, we have $ \widetilde{g}(\beta\theta+y) = g(\beta\theta+y) = f(\beta)$, where $f(s) = u(s\theta+y)$ for a.e. $s \in \ ]\alpha,\beta[$ and $f\in C^0(]\alpha,\beta])$.
    Therefore \eqref{eq:deftradir} holds with $\widetilde{g}$, $u$ and $A\cap \widehat{A}$, which shows that $\widetilde{g}$ is a directional trace of  $u$ in the direction $\theta$  in the sense of Definition \ref{def:dirtrace}.
\end{proof}

The operators $\gamma_\theta$ defined by Definition \ref{def:dirtrace} satisfy the following properties.

\begin{proposition}\label{prop:tracedirectionnelle}
    Let $\theta \in \mathbb{S}^{d-1}$ and $u \in W_\theta(\Omega)$. Then $\gamma_{\theta} u \in L^2(\partial\Omega, \mu_{\theta})$ and 
   \begin{equation}\label{eq:ineqtracezero}
        \int_{\partial\Omega} (\gamma_{\theta} u(z))^2 {\ \rm d}\mu_\theta(z) \le 2 \max(1,{\rm diam}(\Omega)^2)\Vert u\Vert_\theta^2,
    \end{equation}
    and if $\gamma_{\theta} u(z) = 0$ for $\mu_\theta$-a.e. $z \in \partial\Omega$, then the following Poincar\'e inequality holds:
    \begin{equation}\label{eq:pointheta}
        \Vert u\Vert_{L^2(\Omega)} 
        \le {\rm diam}(\Omega) \Vert \partial_\theta  u\Vert_{L^2(\Omega)}.
    \end{equation}
\end{proposition}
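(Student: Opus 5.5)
We reduce to $\theta=e_1$ as in the existence proof and work line by line. By the change of variables defining $\mu_\theta$ (Definition \ref{def:dirmeasurebound}), for any nonnegative Borel $h$ on $\partial\Omega$,
\[
\int_{\partial\Omega} h\,{\rm d}\mu_\theta=\int_\Omega h(\Phi_\theta(x))\,{\rm d}x=\int_{\mathcal{P}_\theta(\Omega)}\sum_{]\alpha,\beta[\in\mathcal{I}_\theta(y)}(\beta-\alpha)\,h(\beta\theta+y)\,{\rm d}y,
\]
since $\Phi_\theta$ is constant, equal to $\beta\theta+y$, on each chord $]\alpha,\beta[$, which has length $\beta-\alpha$. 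We apply this with $h=(\gamma_\theta u)^2$, using the Borel representative $g$ given by the existence theorem; by Theorem \ref{thm:deftraceoned} this choice does not matter. For a.e. $y$, $g(\beta\theta+y)=u_{\theta,y}(\beta)$, where $u_{\theta,y}\in H^1(]\alpha,\beta[)$ is continuous on $]\alpha,\beta]$.

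The key step is a one-dimensional estimate. For $f\in H^1(]\alpha,\beta[)$ and $t\in]\alpha,\beta[$, write $f(\beta)=f(t)+\int_t^\beta f'$. Then
\[
f(\beta)^2\le 2f(t)^2+2(\beta-\alpha)\int_\alpha^\beta f'^2 .
\]
Averaging in $t$ over $]\alpha,\beta[$ and multiplying by $\beta-\alpha$ gives
\[
(\beta-\alpha)f(\beta)^2\le 2\int_\alpha^\beta f^2+2(\beta-\alpha)^2\int_\alpha^\beta f'^2\le 2\max(1,{\rm diam}(\Omega)^2)\int_\alpha^\beta (f^2+f'^2),
\]
because $\beta-\alpha\le{\rm diam}(\Omega)$. (The identity \eqref{eq:valbeta} would serve equally well.) Summing over the intervals of $\mathcal{I}_\theta(y)$, integrating in $y$, and using Fubini together with $u_{\theta,y}'=\partial_\theta u(\cdot\,\theta+y)$ gives \eqref{eq:ineqtracezero}. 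This also shows $\gamma_\theta u\in L^2(\partial\Omega,\mu_\theta)$.

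For the Poincar\'e inequality \eqref{eq:pointheta}, assume $\gamma_\theta u=0$ $\mu_\theta$-a.e. By Lemma \ref{lem:negl}, for a.e. $y$ we then have $u_{\theta,y}(\beta)=0$ for every interval $]\alpha,\beta[\in\mathcal{I}_\theta(y)$. For $s\in]\alpha,\beta[$, write $f(s)=-\int_s^\beta f'$. Cauchy--Schwarz gives
\[
f(s)^2\le(\beta-\alpha)\int_\alpha^\beta f'^2,
\]
and integrating in $s$ yields
\[
\int_\alpha^\beta f^2\le(\beta-\alpha)^2\int_\alpha^\beta f'^2\le{\rm diam}(\Omega)^2\int_\alpha^\beta f'^2 .
\]
Summing over intervals and integrating in $y$ gives \eqref{eq:pointheta} after taking square roots.

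The main obstacle is the bookkeeping in the first display. One must check that the disintegration of $\mu_\theta$ along lines is legitimate: the measurability of $\delta_{\pm\theta}$ (Lemma \ref{lem:measr}) is needed, and the null sets must transfer correctly between $\mu_\theta$-a.e. statements on $\partial\Omega$ and $\lambda^{d-1}$-a.e. statements in $y$, via Lemma \ref{lem:negl}, Lemma \ref{lem:partialthetaboundary} and Corollary \ref{cor:negl2}. Once that is settled, the rest is the elementary one-dimensional estimate.
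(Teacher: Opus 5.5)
Your proof is correct and takes the same route as the paper. You use the same disintegration $\int g^2\,{\rm d}\mu_\theta=\int_{H_\theta}\sum_{]\alpha,\beta[\in\mathcal{I}_\theta(y)}(\beta-\alpha)\,g(y+\beta\theta)^2\,{\rm d}y$, the one-dimensional bound \eqref{eq:majfhun} (which you rederive by averaging the fundamental theorem of calculus rather than citing \eqref{eq:valbeta}, with the same constant), and the chord-by-chord Poincaré estimate \eqref{eq:poinfhun}. Your explicit appeal to Lemma \ref{lem:negl}, to pass from $\gamma_\theta u=0$ $\mu_\theta$-a.e. to $u_{\theta,y}(\beta)=0$ on a.e. line, fills in a step the paper leaves implicit.
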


\begin{proof}
    Let $u \in W_\theta(\Omega)$, and let $g$ be a representative of $\gamma_{\theta} u$ which is a directional trace of $u$ in the sense of Definition \ref{def:dirtrace}. We again denote by $u$ a representative of $u$ and by $A \subset \mathcal{P}_\theta(\Omega)$ a set such that \eqref{eq:deftradir} hold. We have, using the notation of Definition \ref{def:dirtrace} and accounting for \eqref{eq:defmutheta},
    \[ \int_{\partial\Omega} g(z)^2 {\ \rm d} \mu_\theta(z)
            = \int_\Omega g(\Phi_\theta(x))^2 {\ \rm d} x
            = \int_{H_\theta} \int_{\omega_\theta(y)} g(\Phi_\theta(y+s\theta))^2 {\ \rm d}s {\ \rm d}y. \]
        However, if $]\alpha, \beta[ \ \subset \omega_\theta(y)$ and $s \in \ ]\alpha, \beta[$, then $\Phi_\theta(y+s\theta) = y+\beta\theta$. Hence,
        \begin{align*}
            \int_{\partial\Omega} g(z)^2 {\ \rm d}\mu_\theta(z)
            & = \int_{H_\theta}  \sum_{]\alpha,\beta[\in \mathcal I_\theta(y)} (\beta-\alpha) g(y+\beta\theta)^2{\rm d}y.
        \end{align*}
        We know, by Definition \ref{def:dirtrace}, that $g(y+\beta\theta) = f(\beta)$, where $f$ is the continuous representative on $[\alpha, \beta]$ of the map $u_{\theta,y} : s \mapsto u(y+s\theta)$, which lies in $H^1(]\alpha, \beta[)$. Applying \eqref{eq:majfhun}, we get
        \begin{align*}
            \int_{\partial\Omega} g(z)^2 {\ \rm d}\mu_\theta(z) 
            & \le \int_{H_\theta} \sum_{]\alpha,\beta[\in  \mathcal{I}_{\theta}(y)} \Vert u_{\theta,y} \Vert_{H^1(]\alpha,\beta[)}^2 2\max(1, (\beta - \alpha)^2) {\rm d}y \\
            & \le 2 \max(1,{\rm diam}(\Omega)^2) \int_{H_\theta} \Vert u_{\theta,y}\Vert_{H^1(\omega_\theta(y))}^2 {\ \rm d}y.
        \end{align*} 
        Noticing that the weak derivative of $u_{y, \theta}$ on $\omega_\theta(y)$ is $\partial_\theta u(\cdot \theta + y)$, we obtain \eqref{eq:ineqtracezero}. Moreover, if $g(\beta\theta+y)=0$ for all $]\alpha, \beta[ \ \in \mathcal I_\theta(y)$, then \eqref{eq:poinfhun} gives \eqref{eq:pointheta}.
\end{proof}

\begin{proposition}\label{prop:tracesvisavis}
    Let $\theta \in \mathbb{S}^{d-1}$ and $u \in W_\theta(\Omega)$. Then using the map $\Sym$ defined in \eqref{eq:defphi}, we have
    \[ \int_{\partial\Omega} \Big(\gamma_{\theta} u(z) + \gamma_{\minus \theta} u(\Sym(z))\Big)^2 {\ \rm d}\mu_\theta(z)\le 4 \max(1,{\rm diam}(\Omega)^2)\Vert u\Vert_\theta^2, \]
    and
    \[ \int_{\partial\Omega} \Big(\frac{\gamma_{\theta} u(z) -\gamma_{\minus \theta} u(\Sym(z))}{\ell_{\minus\theta}(z)}\Big)^2 {\ \rm d}\mu_\theta(z)\le \Vert u\Vert_\theta^2. \]
\end{proposition}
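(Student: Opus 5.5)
The proof reduces to one-dimensional estimates on each chord, in the same way as Proposition \ref{prop:tracedirectionnelle}. Fix a representative $u$ and a set $A\subset\mathcal{P}_\theta(\Omega)$ for which \eqref{eq:deftradir} holds simultaneously for a representative $g$ of $\gamma_\theta u$ in direction $\theta$ and for a representative $\widehat g$ of $\gamma_{\minus\theta}u$ in direction $-\theta$. This is possible by Lemma \ref{lem:traceoned}: intersect the two full-measure sets obtained for the same representative. Take $y\in A$ and $]\alpha,\beta[\ \in\mathcal{I}_\theta(y)$. Then $]-\beta,-\alpha[\ \in\mathcal{I}_{\minus\theta}(y)$. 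Since $u_{\minus\theta,y}(s)=u_{\theta,y}(-s)$, the $-\theta$ trace at $-\alpha$ is the value $f(\alpha)$ of the continuous representative $f$ of $u_{\theta,y}$ on $[\alpha,\beta]$. Moreover $\Sym(y+\beta\theta)=y+\alpha\theta$ and $\ell_{\minus\theta}(y+\beta\theta)=\beta-\alpha$. So $g(z)=f(\beta)$ and $\widehat g(\Sym(z))=f(\alpha)$ at $z=y+\beta\theta$.

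Next I would use the definition \eqref{eq:defmutheta} and Fubini, exactly as in the proof of Proposition \ref{prop:tracedirectionnelle}. The integrand is constant along each chord, so for any Borel function $G$ of $z$,
\[ \int_{\partial\Omega} G\,{\rm d}\mu_\theta=\int_{H_\theta}\sum_{]\alpha,\beta[\in\mathcal I_\theta(y)}(\beta-\alpha)\,G(y+\beta\theta)\,{\rm d}y . \]
Measurability of $z\mapsto \widehat g(\Sym(z))$ and of $\ell_{\minus\theta}$ follows from that of $\delta_{\pm\theta}$ (Lemma \ref{lem:measr}), via $x\mapsto\widehat g(\Phi_{\minus\theta}(x))$. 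Both inequalities are thereby reduced to one-dimensional inequalities for $f\in H^1(]\alpha,\beta[)$, weighted by $L=\beta-\alpha$.

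For the first inequality, $(f(\beta)+f(\alpha))^2\le 2f(\beta)^2+2f(\alpha)^2$. I would apply \eqref{eq:majfhun} at both endpoints; it is symmetric by reflection. This gives $L(f(\alpha)^2+f(\beta)^2)\le 2\cdot 2\max(1,L^2)\|f\|^2_{H^1}$, and a crude route therefore yields a constant of $8$ instead of $4$. To get $4$, I would use a sharper identity. Write $f(\beta)+f(\alpha)=\frac1L\int_\alpha^\beta\big(2f(t)+(2t-\alpha-\beta)f'(t)\big){\rm d}t$, which comes from adding \eqref{eq:valbeta} and its mirror version. Cauchy–Schwarz with $|2t-\alpha-\beta|\le L$ then gives $L(f(\alpha)+f(\beta))^2\le 2\int(4f^2+L^2f'^2)\le 8\max(1,L^2)\|f\|^2$. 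This is still $8$, so the constant needs care. I would try $(a+b)^2\le 4\max(...)$ by weighting: $L(f(\alpha)+f(\beta))^2\le \big(\int 2|f|+L|f'|\big)^2/L\le (4+L^2)\|f\|^2$, using Cauchy–Schwarz on $(2,L)$ against $(|f|,|f'|)$ integrated. Since $4+L^2\le 4\max(1,L^2)$ when $L\ge... $ fails for small $L$ but $4+L^2\le 5\max(1,L^2)$, the exact constant is the main obstacle. I expect to attain $4\max(1,L^2)$ by splitting $(2f,Lf')$ as $\int\!(|f|+|f|+L|f'|)$ with $\sqrt{1+1+L^2}\le... $, or by accepting $\max(1,{\rm diam}^2)$ absorption. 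Summing over intervals with $L\le{\rm diam}(\Omega)$ and integrating in $y$ then gives $\|u\|_\theta^2$ on the right, since $u_{\theta,y}'=\partial_\theta u(\cdot\theta+y)$.

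The second inequality is direct. Since $f(\beta)-f(\alpha)=\int_\alpha^\beta f'$, Cauchy–Schwarz gives $L\big((f(\beta)-f(\alpha))/L\big)^2\le\int_\alpha^\beta f'^2$. Summing and integrating yields $\|\partial_\theta u\|^2_{L^2}\le\|u\|_\theta^2$.
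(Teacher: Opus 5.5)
\textbf{Second inequality.} Your argument is correct and is the paper's own: chordwise Cauchy--Schwarz, i.e.\ \eqref{eq:majdiffhun}.

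\textbf{First inequality.} Here there is a genuine gap. You never reach the constant $4$; you only hope to get it by a finer splitting or by ``absorption'' into $\max(1,{\rm diam}(\Omega)^2)$. This gap cannot be closed, because the first inequality is false with the constant $4$.

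Take $d=1$, $\Omega=\,]0,1[$, $\theta=1$. Then $\mu_1$ is the unit Dirac mass at $1$, $\Sym(1)=0$ and ${\rm diam}(\Omega)=1$. For $u(t)=1+(t-\frac12)^2$ the left-hand side is $(u(0)+u(1))^2=\frac{25}{4}$. The right-hand side is $4\Vert u\Vert_\theta^2=4\int_{-1/2}^{1/2}(1+6s^2+s^4)\,{\rm d}s=\frac{121}{20}$, which is smaller than $\frac{25}{4}$. The same $u$ on $]0,1[\times]0,\varepsilon[$ with $\theta=e_1$ and $\varepsilon$ small gives a counterexample in 2D.

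More generally, on a chord of length $L$ the optimal constant in $L(f(\alpha)+f(\beta))^2\le C\,\Vert f\Vert_{H^1(]\alpha,\beta[)}^2$ is $C=2L\coth(L/2)$. It is attained by $\cosh$ centred at the midpoint of the chord, and $2L\coth(L/2)>4$ for every $L>0$. So the bound with $4$ fails whenever ${\rm diam}(\Omega)\le 1$: there $\max(1,{\rm diam}(\Omega)^2)=1$ and nothing is left to absorb. The same example shows that \eqref{eq:majsumfhun} in Lemma \ref{lem:fhun} is false on short intervals.

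\textbf{What you did prove.} You have in fact proved a correct version of the statement:
\begin{itemize}
\item Your ``crude route'', $(a+b)^2\le 2a^2+2b^2$ together with \eqref{eq:majfhun} at both endpoints, is a complete proof with constant $8\max(1,{\rm diam}(\Omega)^2)$.
\item Your weighted estimate $L(f(\alpha)+f(\beta))^2\le(4+L^2)\Vert f\Vert_{H^1(]\alpha,\beta[)}^2$ gives $5\max(1,{\rm diam}(\Omega)^2)$, since $4+L^2\le 5\max(1,L^2)$.
\end{itemize}
The paper's one-line proof, \eqref{eq:ineqtracezero} combined with Lemma \ref{lem:visavis}, yields exactly your constant $8$, not $4$. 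So you should not search for $4$; state and prove the inequality with $8$ (or $5$). The value of the constant plays no role in its later use, namely the integrability of the boundary term in Theorem \ref{thm:ipp}.
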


\begin{proof}
    The first formula is a direct consequence of \eqref{eq:ineqtracezero} and Lemma \ref{lem:visavis}. The second one comes from \eqref{eq:majdiffhun} in Lemma \ref{lem:fhun}, after noticing that
    \[ \int_{\partial\Omega} \Big(\frac{\gamma_{\theta} u(z) -\gamma_{\minus \theta} u(\Sym(z))}{\ell_{\minus\theta}(z)}\Big)^2 {\ \rm d}\mu_\theta(z)
        = \int_{H_\theta} \sum_{]\alpha, \beta[ \in \mathcal I_\theta(y)} \frac{[\gamma_\theta u(y+\beta\theta) - \gamma_{\minus\theta} u(y+\alpha\theta)]^2}{\beta - \alpha}  {\rm d}y, \]
    with $\gamma_{\minus\theta} u(y+\alpha\theta) = f(\alpha)$ and $\gamma_\theta u(y+\beta\theta) = f(\beta)$, where $f$ is the continuous representative on $[\alpha, \beta]$ of the map $u_{\theta,y} : s \mapsto u(y+s\theta)$. 
\end{proof}

\begin{theorem}[Integration-by-parts formula]\label{thm:ipp}
    Let $\theta \in \mathbb{S}^{d-1}$ and $u,v \in W_\theta(\Omega)$. We have
    \begin{equation}\label{eq:greenoned}
        \int_\Omega \Big(u(x)\partial_\theta v(x) +v(x)\partial_\theta u(x)\Big){\ \rm d}x = \int_{\partial\Omega} \frac{\gamma_{\theta} u(z)\gamma_{\theta} v(z) -\gamma_{\minus \theta} u(\Sym(z))\gamma_{\minus \theta} v(\Sym(z))}{\ell_{\minus\theta}(z)} {\ \rm d}\mu_\theta(z).
    \end{equation}
\end{theorem}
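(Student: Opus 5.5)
The plan is to reduce \eqref{eq:greenoned} to the one-dimensional integration-by-parts formula on each chord, following the same slicing used in the proofs of Proposition \ref{prop:tracedirectionnelle} and Proposition \ref{prop:tracesvisavis}. As in the existence theorem, we use a change of coordinates and choose representatives of $u$ and $v$ together with sets $A_u, A_v\subset\mathcal P_\theta(\Omega)$ of full $\lambda^{d-1}$-measure on which \eqref{eq:deftradir} holds. By Lemma \ref{lem:traceoned} and Theorem \ref{thm:deftraceoned}, these can be taken simultaneously for $\theta$ and $-\theta$, with traces $g_u,\widetilde g_u$ and $g_v,\widetilde g_v$. For $y\in A:=A_u\cap A_v$ we then have $u_{\theta,y},v_{\theta,y}\in H^1(\omega_\theta(y))$, with weak derivatives $\partial_\theta u(\cdot\theta+y)$ and $\partial_\theta v(\cdot\theta+y)$. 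Moreover, on each $]\alpha,\beta[\in\mathcal I_\theta(y)$ the continuous representatives on $[\alpha,\beta]$ take the values $g_u(y+\beta\theta)$ at $\beta$ and $\widetilde g_u(y+\alpha\theta)$ at $\alpha$, and similarly for $v$.

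The first step is the left-hand side. By Fubini--Tonelli, since $u\partial_\theta v+v\partial_\theta u\in L^1(\Omega)$, it equals $\int_{H_\theta}\sum_{]\alpha,\beta[\in\mathcal I_\theta(y)}\int_\alpha^\beta (u_{\theta,y}v_{\theta,y})'\,{\rm d}s\,{\rm d}y$. On each interval the product of two $H^1(]\alpha,\beta[)$ functions is in $W^{1,1}$ and is absolutely continuous up to the endpoints. The inner integral is therefore $f_u(\beta)f_v(\beta)-f_u(\alpha)f_v(\alpha)$, which equals $g_u g_v(y+\beta\theta)-\widetilde g_u\widetilde g_v(y+\alpha\theta)$. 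The interchange of sum and integral is justified by dominated convergence, since $\sum\int_\alpha^\beta|(u v)'|$ is integrable in $y$.

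The second step is the right-hand side. Exactly as in the proof of Proposition \ref{prop:tracesvisavis}, we use \eqref{eq:defmutheta}, the fact that $\Phi_\theta$, $\Sym\circ\Phi_\theta$ and $\ell_{\minus\theta}\circ\Phi_\theta$ are constant on each chord, and Lemma \ref{lem:visavis}, which identifies $\Sym(y+\beta\theta)=y+\alpha\theta$ and $\ell_{\minus\theta}(y+\beta\theta)=\beta-\alpha$. This rewrites the boundary integral as $\int_{H_\theta}\sum_{]\alpha,\beta[}(\beta-\alpha)\frac{g_ug_v(y+\beta\theta)-\widetilde g_u\widetilde g_v(y+\alpha\theta)}{\beta-\alpha}\,{\rm d}y$. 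The factor $\beta-\alpha$ cancels, and this coincides with the expression obtained in the first step.

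The main obstacle is checking that the boundary integrand is $\mu_\theta$-integrable, so that the change of variables is legitimate; this is where the opposite-point pairing is essential. We write $ab-cd=\frac12[(a+c)(b-d)+(a-c)(b+d)]$ with $a=\gamma_\theta u(z)$, $c=\gamma_{\minus\theta}u(\Sym(z))$, and $b,d$ the corresponding traces of $v$. Dividing by $\ell_{\minus\theta}$, each term is a product of a function of the form $\gamma_\theta(\cdot)+\gamma_{\minus\theta}(\cdot)\circ\Sym$ and one of the form $(\gamma_\theta(\cdot)-\gamma_{\minus\theta}(\cdot)\circ\Sym)/\ell_{\minus\theta}$. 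Both lie in $L^2(\partial\Omega,\mu_\theta)$ by Proposition \ref{prop:tracesvisavis}, so Cauchy--Schwarz gives integrability. The same decomposition applied chordwise gives $L^1$ control of $\sum_{]\alpha,\beta[}|\cdot|$ in $y$, which validates the Fubini manipulations in both steps. Finally, the choice of representatives is irrelevant by Theorem \ref{thm:deftraceoned}, since $\mu_\theta$-null modifications of $\gamma_\theta u$, and $\mu_{\minus\theta}$-null modifications of $\gamma_{\minus\theta}u$ (which pull back under $\Sym$ to $\mu_\theta$-null sets, by Lemma \ref{lem:visavis}), do not change the integral.
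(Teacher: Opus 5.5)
Your proposal is correct and follows essentially the same route as the paper. You use chordwise one-dimensional integration by parts for the product of the continuous representatives, then sum over $\mathcal I_\theta(y)$ and integrate in $y$. You identify the boundary side through the definition of $\mu_\theta$, with the factor $\beta-\alpha$ cancelling $\ell_{\minus\theta}$. You obtain integrability of the boundary integrand from the splitting $ab-cd=\frac12[(a+c)(b-d)+(a-c)(b+d)]$ together with Proposition~\ref{prop:tracesvisavis}, exactly as the paper does. The differences are cosmetic: you bound the products by Cauchy--Schwarz where the paper uses $|xy|\le\frac12(x^2+y^2)$, and you spell out the representative-independence and the $\Sym$-pullback of $\mu_{\minus\theta}$-null sets, which the paper leaves implicit.
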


\begin{proof}
    We first verify that the right-hand side is well defined. Indeed, denoting for short $\ell_z = \ell_{\minus\theta}(z)$, $a_z = \gamma_{\theta} u(z)$, $b_z = \gamma_{\theta} v(z)$, $c_z = \gamma_{\minus \theta} u(\Sym(z))$ and $d_z = \gamma_{\minus \theta} v(\Sym(z))$, we write 
    \begin{multline*}
        \Big|\frac {a_zb_z-c_zd_z}{\ell_z}\Big| =\Big|  \frac {a_z-c_z}{\ell_z}\frac {b_z+d_z}{2} +  \frac {a_z+c_z}{2}\frac {b_z-d_z}{\ell_z}\Big|\\
        \le  \frac 1 2\Big(\frac {a_z-c_z}{\ell_z}\Big)^2 + \frac 1 2 \Big(\frac {b_z+d_z}{2}\Big)^2 + \frac 1 2 \Big(\frac {a_z+c_z}{2}\Big)^2 +\frac 1 2\Big(\frac {b_z-d_z}{\ell_z}\Big)^2.
    \end{multline*}
    All four terms obtained are $\mu_\theta$-integrable on $\partial \Omega$, by Proposition \ref{prop:tracesvisavis}. We now move on to the calculations. First, see that, for a.e. $y\in \mathcal{P}_\theta(\Omega)$, the functions $s\mapsto u(y+s\theta)$ and $s\mapsto v(y+s\theta)$
    are elements of $H^1(\omega_\theta(y))$. When restricted to $]\alpha, \beta[ \ \in \mathcal I_\theta(y)$, they have continuous representatives on $[\alpha,\beta]$, denoted by $f$ and $g$, which satisfy
    \[
    f(\beta)g(\beta) - f(\alpha)g(\alpha) = \int_\alpha^\beta \big(f(s) Dg(s) + Df(s) g(s)\big) {\rm d}s.
    \]
    Accounting for Definition \ref{def:dirtrace} of the directional trace, we deduce
    \[
    \gamma_{\theta} u(y+\beta\theta)\gamma_{\theta} v(y+\beta\theta) -\gamma_{\minus \theta} u(y+\alpha\theta)\gamma_{\minus \theta} v(y+\alpha\theta) =  \int_\alpha^\beta \big( u(y+s\theta) \partial_\theta v(y+s\theta) + \partial_\theta u(y+s\theta) v(y+s\theta)\big){\ \rm d} s.
    \]
    Summing on $]\alpha, \beta[ \ \in \mathcal I_\theta(y)$ and integrating with respect to $y$, we obtain
    \begin{align*}
        & \int_{H_\theta} \sum_{]\alpha,\beta[\in \mathcal I_\theta(y)} \big( \gamma_{\theta} u(y+\beta\theta)\gamma_{\theta} v(y+\beta\theta) -\gamma_{\minus \theta} u(y+\alpha\theta)\gamma_{\minus \theta}v(y+\alpha\theta)\big){\ \rm d}y \\ = & \int_{H_\theta} \sum_{]\alpha,\beta[\in \mathcal I_\theta(y)} \int_\alpha^\beta \big( u(y+s\theta) \partial_\theta v(y+s\theta) + \partial_\theta u(y+s\theta) v(y+s\theta)\big){\ \rm d} s {\ \rm d}y. 
    \end{align*}
   From the definition of $\mu_\theta$, we get that 
     \begin{align*}
        & \int_{H_\theta} \sum_{]\alpha,\beta[\in \mathcal I_\theta(y)} \big( \gamma_{\theta} u(y+\beta\theta)\gamma_{\theta} v(y+\beta\theta) -\gamma_{\minus \theta} u(y+\alpha\theta)\gamma_{\minus \theta}v(y+\alpha\theta)\big){\ \rm d}y \\ = & \int_{\partial\Omega} \frac{\gamma_{\theta} u(z)\gamma_{\theta} v(z) -\gamma_{\minus \theta} u(\Sym(z))\gamma_{\minus \theta} v(\Sym(z))}{\ell_{\minus\theta}(z)} {\ \rm d}\mu_\theta(z),
    \end{align*}
    and by the change of variable $(s,y)\to x$, we obtain
  \begin{align*}
        &  \int_{H_\theta} \sum_{]\alpha,\beta[\in \mathcal I_\theta(y)} \int_\alpha^\beta \big( u(y+s\theta) \partial_\theta v(y+s\theta) + \partial_\theta u(y+s\theta) v(y+s\theta)\big){\ \rm d} s {\ \rm d}y\\ = &
        \int_\Omega \big(u(x) \partial_\theta v(x) + \partial_\theta u(x) v(x)\big) {\ \rm d} x. 
    \end{align*}
    This concludes the proof of \eqref{eq:greenoned}.
   \end{proof}
 
\begin{remark}
    As observed in the introduction,  $\int_{\partial\Omega} \gamma_{\theta} u\gamma_{\theta}v/\ell_{\minus\theta}\ {\rm d}\mu_\theta $ and $\int_{\partial\Omega} \gamma_{\minus\theta} u \gamma_{\minus\theta} v /\ell_{\theta}\ {\rm d}\mu_{\minus\theta}$
    may separately be non-integrable in \eqref{eq:greenoned}, we need to combine both terms to get integrable functions. Consider the domain in Example \ref{exa:cuspidal}, and the functions $u(x_1,x_2) = v(x_1,x_2) = x_2^{-\alpha}$ with $\alpha\in \ ]\frac 1 2,1[$, then $\frac{\gamma_{\theta} u\gamma_{\theta} v}{\ell_{\minus\theta}} $ is not integrable for the measure $\mu_\theta$.
\end{remark}

\begin{remark}\label{rem:skew}
    Note that $W_\theta(\Omega)$ is the domain of the maximal extension, in $L^2(\Omega)$, of the skew-symmetric operator $-\partial_\theta$ initially defined on $C^\infty_c(\Omega)$. Hence we know from \cite{ace2023ext} that there exist Hilbert spaces $H_\plus$ and $H_\minus$ and two continuous operators $G_\pm:W_\theta(\Omega)\to H_\pm$ such that the mapping $u\mapsto (G_\plus(u),G_\minus(u))$ defined from $W_\theta(\Omega)$ to $H_\plus\times H_\minus$ is surjective, and
    \[ \forall u,v\in W_\theta(\Omega),\ 
    \int_\Omega \big( u(x) \partial_\theta v(x) + v(x) \partial_\theta   u(x)\big){\rm d}x = \langle G_\plus u,G_\plus v\rangle_{H_\plus} - \langle G_\minus u,G_\minus v\rangle_{H_\minus}. \]
    We get from Theorem \ref{thm:ipp} that we can define $H_\pm = G_\pm(W_\theta(\Omega)) = L^2(\partial\Omega,\mu_\theta)$ with $G_\pm:W_\theta(\Omega)\to L^2(\partial\Omega,\mu_\theta)$ given by
    \[ G_\plus u(z) = \frac 1 2 \big(a_z+ b_z + \frac {a_z-b_z} {\ell_z}\big)
        \hbox{ and }
        G_\minus u(z) = \frac 1 2 \big(a_z+ b_z - \frac {a_z-b_z} {\ell_z}\big), \]
    for $\mu_\theta$-a.e. $z\in\partial\Omega$, letting $\ell_z = \ell_{\minus\theta}(z)$, $a_z =  \gamma_{\theta} u(z)$ and $b_z = \gamma_{\minus \theta} u(\Sym(z))$.
    The surjectivity of the mapping $u\mapsto (G_\plus(u),G_\minus(u))$, from $W_\theta(\Omega)$ to $L^2(\partial\Omega,\mu_\theta)^2$, is a consequence of the fact that, for all $a,b\in\mathbb{R}$, the function $f:s\mapsto a + (b-a)\frac {s - \alpha}{\beta - \alpha}$ is such that $f\in H^1(]\alpha,\beta[)\cap C^0([\alpha,\beta])$ with $f(\beta) = b$ and $f(\alpha) = a$ .
\end{remark}

\begin{lemma}[Restriction to an open subset of $\Omega$]\label{lem:included}
    Let $\widehat{\Omega}\subset\Omega$ be an open set.
    We denote by $\widehat{\mu}_\theta$, $\widehat{\Phi}_\theta$, $\widehat{\gamma}_\theta$ the quantities $\mu_\theta$, $\Phi_\theta$, $\gamma_\theta$ replacing $\Omega$ by $\widehat{\Omega}$. Let $u\in H^1(\Omega)$, and let $\widehat{u}$ denote the restriction of $u$ to $\widehat{\Omega}$. Let $g$ (resp. $\widehat{g}$) be a directional trace of $u$ (resp. $\widehat{u}$) in the direction $\theta$ in the sense of Definition \ref{def:dirtrace}. Then $\widehat{g}(z) = g(z)$ for $\mu_\theta$-a.e. and $\widehat{\mu}_\theta$-a.e. $z \in \partial_\theta \widehat{\Omega} \cap \partial_\theta\Omega$.
\end{lemma}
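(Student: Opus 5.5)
Work line by line, as in Theorem~\ref{thm:deftraceoned}. Fix a representative of $u$, still denoted $u$, and take its restriction as the representative of $\widehat{u}$. Let $A$ (resp.\ $\widehat{A}$) be the set given by \eqref{eq:deftradir} for $g,u$ (resp.\ $\widehat{g},\widehat{u}$); by Lemma~\ref{lem:traceoned} this choice of representatives is allowed. Since $\mathcal{P}_\theta(\widehat{\Omega})\subset\mathcal{P}_\theta(\Omega)$, the set $B=\widehat{A}\cap A$ has $\lambda^{d-1}$-negligible complement in $\mathcal{P}_\theta(\widehat{\Omega})$.

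For $y\in B$ and an interval $]\widehat\alpha,\widehat\beta[\ \in\widehat{\mathcal{I}}_\theta(y)$, we have $]\widehat\alpha,\widehat\beta[\ \subset\omega_\theta(y)$, so it lies in a unique $]\alpha,\beta[\ \in\mathcal{I}_\theta(y)$ with $\alpha\le\widehat\alpha<\widehat\beta\le\beta$. Suppose moreover $z=y+\widehat\beta\theta\in\partial_\theta\Omega$. Then $z\notin\Omega$, so $\widehat\beta$ cannot lie in $]\alpha,\beta[$, which forces $\widehat\beta=\beta$. The function $u_{\theta,y}$ is in $H^1(]\alpha,\beta[)$ and is continuous on $]\alpha,\beta]$. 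Its restriction to $]\widehat\alpha,\beta[$ is a.e.\ equal to $\widehat{u}_{\theta,y}$, so the two continuous representatives coincide on $]\widehat\alpha,\beta]$. Hence
\[
\widehat{g}(z)=\widehat{u}_{\theta,y}(\beta)=u_{\theta,y}(\beta)=g(z).
\]
So equality holds at every point $y+\widehat\beta\theta\in\partial_\theta\Omega$ with $y\in B$.

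It remains to convert this into the two measure statements. This is the only delicate step, and it uses the appendix results exactly as in Step~1 of the proof of Theorem~\ref{thm:deftraceoned}.

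For $\widehat\mu_\theta$: the exceptional set is contained in $\{y+\widehat\beta\theta : y\in\mathcal{P}_\theta(\widehat\Omega)\setminus B\}$. By Lemma~\ref{lem:partialthetaboundary} and Corollary~\ref{cor:negl2} applied to $\widehat{\Omega}$, this set is $\widehat\mu_\theta$-negligible.

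For $\mu_\theta$: a point $z\in\partial_\theta\widehat\Omega\cap\partial_\theta\Omega$ can be written $z=y+\beta\theta$ with $]\alpha,\beta[\ \in\mathcal{I}_\theta(y)$. For such $z$, the exceptional $y$ lie in $\mathcal{P}_\theta(\widehat\Omega)\setminus B$, a $\lambda^{d-1}$-negligible subset of $\mathcal{P}_\theta(\Omega)$. The same appendix results for $\Omega$ then give $\mu_\theta$-negligibility. Two points need a check here: that the exceptional set is Borel, or at least contained in a Borel $\mu_\theta$-null set (Lemma~\ref{lem:measr} should supply this), and that $\mu_\theta$ charges boundary points only through their projections. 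The latter is clear from \eqref{eq:defmutheta} and Fubini, because $\mu_\theta(E)=\int_{H_\theta}\sum_{]\alpha,\beta[}(\beta-\alpha)\chi_E(y+\beta\theta)\,{\rm d}y$.
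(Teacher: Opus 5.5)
Your proof is correct and follows the same route as the paper. You prove pointwise equality $g=\widehat g$ at points of $\partial_\theta\widehat\Omega\cap\partial_\theta\Omega$ whose projection lies in $A\cap\widehat A$, then place the projection of the exceptional set inside a $\lambda^{d-1}$-negligible set and apply Corollary~\ref{cor:negl2} to both $\Omega$ and $\widehat\Omega$; your argument that $\widehat\beta=\beta$ because $z\notin\Omega$ actually fills in a step the paper only asserts. The two ``points needing a check'' at the end are already covered by Corollary~\ref{cor:negl2}, which applies to arbitrary subsets of $\partial\Omega$ and supplies the Borel null superset $\mathcal{P}_\theta^{-1}(N)\cap\partial_\theta\Omega$ through Lemma~\ref{lem:dthetaomborel}, not Lemma~\ref{lem:measr}.
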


\begin{proof}
    We select a representative of $u$, again denoted by $u$, and we again denote by $\widehat{u}$ the restriction of the representative $u$ to  $\widehat{\Omega}$. Let $A$ (resp $\widehat{A}$) such that \eqref{eq:deftradir} hold.
    Then for all $y \in A$, we have $u(\cdot\theta+y) \in H^1(\omega_\theta(y))$ and for all $y \in \widehat{A}$, we also have $\widehat{u}(\cdot\theta+y) \in H^1(\widehat{\omega}_\theta(y))$. Moreover, for all $y \in A \cap \widehat A$, $u(\cdot\theta+y) = \widehat{u}(\cdot\theta+y)$, and the sets $\mathcal{P}_\theta(\partial_\theta\widehat{\Omega})\setminus \widehat{A}$ and $\mathcal{P}_\theta(\partial_\theta\Omega)\setminus A$ are $\lambda^{d-1}$-negligible. Let us define
    \[ B = \{z\in \partial_\theta\widehat{\Omega} \cap \partial_\theta\Omega : g(z) = \widehat{g}(z)\}, \]
    and
    \[ \widetilde{B} = \{z\in \partial_\theta\widehat{\Omega}\cap \partial_\theta\Omega : \mathcal{P}_\theta(z) \in A\cap \widehat{A}\}. \]
    We get from \eqref{eq:deftradir} that, for $z \in \widetilde{B}$, it holds $g(z) = \widehat{g}(z)$. This proves that $\widetilde{B}\subset B$. \medskip 

    Let $y \in \mathcal{P}_\theta((\partial_\theta\widehat{\Omega}\cap \partial_\theta\Omega) \setminus B)$. We therefore have that, for $z \in (\partial_\theta\widehat{\Omega} \cap \partial_\theta\Omega) \setminus B$ such that $y = \mathcal{P}_\theta(z)$, $g(z) \neq \widehat{g}(z)$, which means that $y\notin A \cap \widehat{A}$. If $y \in A$ (resp. $\widehat A$), this implies that $y \notin \widehat{A}$ (resp. $y \notin A$), which means that $y \in \partial_\theta \widehat{\Omega} \setminus \widehat{A}$ (resp. $y \in \partial_\theta \Omega \setminus A$). If $y \notin A \cup \widehat{A}$, it means that $y \in (\partial_\theta \widehat{\Omega} \setminus \widehat{A}) \cup (\partial_\theta\Omega \setminus A)$. Therefore
    \[ \mathcal{P}_\theta((\partial_\theta\widehat{\Omega} \cap \partial_\theta\Omega) \setminus B) \subset (\mathcal{P}_\theta(\partial_\theta\Omega)\setminus A) \cup (\mathcal{P}_\theta(\partial_\theta\widehat{\Omega})
    \setminus\widehat{A}). \]
    This implies that the set $\mathcal{P}_\theta((\partial_\theta\widehat{\Omega}\cap \partial_\theta\Omega) \setminus B$) is $\lambda^{d-1}$-negligible.
    Therefore, from Corollary \ref{cor:negl2}, we get that
    $(\partial_\theta\widehat{\Omega}\cap \partial_\theta\Omega) \setminus B$ is both $\mu_\theta$-negligible and $\widehat{\mu}_\theta$-negligible.
\end{proof}

\begin{remark}
    Lemma \ref{lem:included} cannot be strengthened to assert $\widehat{g}(z) = g(z)$ for $\mu_\theta$-a.e. and $\widehat{\mu}_\theta$-a.e. $z\in \partial\widehat{\Omega}\cap \partial\Omega$; the conclusion is only valid on $\partial_\theta\widehat{\Omega}\cap \partial_\theta\Omega$.
    Indeed, let us consider the case 
    \[ \Omega = \ ]0,1[, \qquad \widehat{\Omega} = \bigcup_{n\in\mathbb{N}^\star} \left]\frac 1 {2n+1},\frac 1{2n}\right[. \]
    Then, for $\theta = -1$, $\partial_\theta\widehat{\Omega} = \{ \frac 1 {2n+1}, n\in\mathbb{N}^\star\}$, $0\in\partial\widehat{\Omega}$ and  $\partial_\theta\Omega = \{0\}$.
    Then one can take any value for $\widehat{g}(0)$ such that $\widehat{g}(0)\neq g(0)$ (since $\widehat{\mu}_\theta(\{0\}) = 0$) whereas $\mu_\theta(\{0\}) = 1$.
\end{remark}

\subsection{Directional traces and directional Lebesgue points}\label{sec:lebesgue}

In this section, we establish a relation between the directional trace and the average value around some points of the boundary in the direction $\theta\in \mathbb{S}^{d-1}$.

\begin{theorem} [Directional trace and directional Lebesgue points]\label{thm:lebpoints}
Let $\theta \in \mathbb{S}^{d-1}$ and $u\in W_\theta(\Omega)$. For all $\varepsilon>0$ and for $\mu_\theta$-a.e. $z\in \partial_{\theta}\Omega$, we define
\[
u_{\theta,\varepsilon}(z) =  \frac 1 {\min(\varepsilon,\ell_{\minus\theta}(z))} \int_0^{\min(\varepsilon,\ell_{\minus\theta}(z))} u(z - s\theta) {\ \rm d}s.
\]
Then we have
\begin{equation}\label{eq:cvppbord}
 \hbox{For $\mu_\theta$-a.e. }z\in \partial_{\theta}\Omega,\ \lim_{\varepsilon\to 0} u_{\theta,\varepsilon}(z) = \gamma_\theta u(z)
\end{equation}
and
\begin{equation}\label{eq:intbord}
 \int_{\partial_{\theta}\Omega} ( \gamma_\theta u(z) - u_{\theta,\varepsilon}(z))^2 {\ \rm d}\mu_\theta(z) \le \varepsilon\ {\rm diam}(\Omega)\Vert\partial_\theta u\Vert_{L^2(\Omega)}^2.   
\end{equation}
 \end{theorem}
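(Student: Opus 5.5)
The plan is to reduce everything to a one-dimensional estimate on each chord and then integrate over $y\in H_\theta$, exactly as in the proofs of Proposition \ref{prop:tracedirectionnelle} and Theorem \ref{thm:ipp}.

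Fix a representative of $u$ and a set $A\subset\mathcal{P}_\theta(\Omega)$ as in Definition \ref{def:dirtrace}, with $g$ a representative of $\gamma_\theta u$. For $y\in A$ and $]\alpha,\beta[\ \in\mathcal{I}_\theta(y)$, set $z=y+\beta\theta$. Then $\ell_{\minus\theta}(z)=\beta-\alpha$, and $u(z-s\theta)=f(\beta-s)$, where $f$ is the continuous representative of $u_{\theta,y}$ on $]\alpha,\beta]$. Writing $\eta=\min(\varepsilon,\beta-\alpha)$, we get $u_{\theta,\varepsilon}(z)=\frac1\eta\int_{\beta-\eta}^{\beta}f(t)\,{\rm d}t$ and $g(z)=f(\beta)$.

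\textbf{Pointwise convergence \eqref{eq:cvppbord}.} This follows from the continuity of $f$ at $\beta$: the averages of $f$ over $[\beta-\eta,\beta]$ tend to $f(\beta)$ as $\varepsilon\to0$. This holds for every $y\in A$ and every interval of $\mathcal{I}_\theta(y)$. The set of $z\in\partial_\theta\Omega$ where convergence may fail has projection contained in $\mathcal{P}_\theta(\Omega)\setminus A$, which is $\lambda^{d-1}$-negligible. By Lemma \ref{lem:partialthetaboundary} and Corollary \ref{cor:negl2} (as in Step 1 of Theorem \ref{thm:deftraceoned}), this exceptional set is $\mu_\theta$-negligible.

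\textbf{One-dimensional estimate for \eqref{eq:intbord}.} Write
\[
f(\beta)-\frac1\eta\int_{\beta-\eta}^\beta f(t)\,{\rm d}t=\frac1\eta\int_{\beta-\eta}^\beta\int_t^\beta f'(s)\,{\rm d}s\,{\rm d}t .
\]
Cauchy--Schwarz then gives
\[
\Big|f(\beta)-\frac1\eta\int_{\beta-\eta}^\beta f\Big|^2\le \eta\int_{\beta-\eta}^\beta f'(s)^2\,{\rm d}s\le \varepsilon\int_\alpha^\beta f'(s)^2\,{\rm d}s.
\]
A sharper constant (a factor $1/2$, via Fubini: $\int_{\beta-\eta}^\beta (s-\beta+\eta)|f'(s)|\,{\rm d}s/\eta$) is available but not needed.

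\textbf{Integration over $y$.} Using the definition \eqref{eq:defmutheta} as in Proposition \ref{prop:tracedirectionnelle},
\[
\int_{\partial_\theta\Omega}(g-u_{\theta,\varepsilon})^2\,{\rm d}\mu_\theta=\int_{H_\theta}\sum_{]\alpha,\beta[\in\mathcal I_\theta(y)}(\beta-\alpha)\,(g(y+\beta\theta)-u_{\theta,\varepsilon}(y+\beta\theta))^2\,{\rm d}y .
\]
Bound $\beta-\alpha\le{\rm diam}(\Omega)$ and insert the one-dimensional estimate. Since $f'=\partial_\theta u(\cdot\,\theta+y)$, by the proof of existence of the directional trace, summing over intervals and integrating in $y$ yields $\varepsilon\,{\rm diam}(\Omega)\Vert\partial_\theta u\Vert_{L^2(\Omega)}^2$.

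\textbf{Remaining technical points.} These are minor: the measurability of $z\mapsto u_{\theta,\varepsilon}(z)$, and the fact that it is well defined $\mu_\theta$-a.e. For measurability, I would write it, as in \eqref{eq:traceoned}, as a function of $x$ through $\Phi_\theta$, $\delta_{\pm\theta}$ and Fubini, invoking Lemma \ref{lem:measr}. The main care is in this bookkeeping; the analysis itself is elementary.
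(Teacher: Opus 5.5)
Your proposal is correct and follows essentially the same route as the paper. The paper also proves the chordwise bound $|\gamma_\theta u(z)-u_{\theta,\varepsilon}(z)|^2\le \zeta\int_0^\zeta|\partial_\theta u(z-s\theta)|^2\,{\rm d}s$, with $\zeta=\min(\varepsilon,\ell_{\minus\theta}(z))$, by the fundamental theorem of calculus, Fubini and Cauchy--Schwarz, and then integrates against $\mu_\theta$ using $\beta-\alpha\le{\rm diam}(\Omega)$ and $\zeta\le\varepsilon$. The only minor difference is the pointwise limit: you obtain it from continuity of the chord representative at $\beta$, whereas the paper deduces it from the bound $\varepsilon\Vert\partial_\theta u(z-\cdot\,\theta)\Vert^2_{L^2(\omega_\theta(\mathcal{P}_\theta(z)))}$.
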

 \begin{proof}
 We select a representative of $u$. Then, for $z\in \partial_{\theta}\Omega$ such that the function $s \mapsto u(z - s \theta)$ belongs to $H^1(\omega_\theta(\mathcal{P}_\theta(z)))$, we have, for all $t \in \ ]0, \min(\varepsilon,\ell_{\minus\theta}(z))[$,
\[
\gamma_\theta u(z) - u(z - t \theta) =\int_0^t \partial_\theta u (z - s\theta){\rm d}s.
\]
Using the Fubini theorem, and denoting for short $\zeta = \min(\varepsilon,\ell_{\minus\theta}(z))$, we get
\[
\gamma_\theta u(z) - u_{\theta,\varepsilon}(z)=\frac 1 \zeta \int_0^\zeta \left(\int_0^\zeta \partial_\theta u (z-s\theta) 1_{]0,t[} (s) {\ \rm d}s\right) {\rm d}t = \frac 1 \zeta \int_0^\zeta \left(\int_0^\zeta 1_{]s,\zeta[} (t) {\ \rm d}t\right) \partial_\theta u (z - s\theta){\ \rm d}s.
\]
This gives

\[
    |\gamma_\theta u(z) - u_{\theta,\varepsilon}(z)| \le  \int_0^\zeta | \partial_\theta u (z -s\theta) | {\ \rm d}s.
\]
Using the Cauchy-Schwarz inequality, we obtain
\begin{equation}\label{maj-bte-square}
    |\gamma_\theta u(z) - u_{\theta,\varepsilon}(z)|^2 \le  \zeta \int_0^\zeta | \partial_\theta u (z -s\theta) |^2 {\ \rm d}s.
\end{equation}
This implies that
\[
|\gamma_\theta u(z) - u_{\theta,\varepsilon}(z)|^2 \le\varepsilon \Vert\partial_\theta u (z - \cdot\theta)\Vert_{L^2(\omega_\theta(\mathcal{P}_\theta(z)))}^2,
\]
which implies \eqref{eq:cvppbord}.

\medskip

We then integrate \eqref{maj-bte-square} for the measure $\mu_\theta$. This yields, from the definition of $\mu_\theta$,
\[
\int_{\partial_\theta \Omega} |\gamma_\theta u(z) - u_{\theta,\varepsilon}(z)|^2 {\ \rm d} \mu_\theta(z) \le 
\int_\Omega \min(\varepsilon,\ell_{\minus\theta}(x)) \int_0^{\min(\varepsilon,\ell_{\minus\theta}(x))} |\partial_\theta u (\Phi_\theta(x) -s\theta) |^2{\ \rm d}s{\ \rm d}x.
\]
We apply the change of variable $x \to (t,y)$ with $x = y+t\theta$ and $t\in \omega_\theta(y)$. This leads to
\begin{multline*}
 \int_\Omega \min(\varepsilon,\ell_{\minus\theta}(x)) \int_0^{\min(\varepsilon,\ell_{\minus\theta}(x))} |\partial_\theta u (\Phi_\theta(x) -s\theta) |^2{\ \rm d}s{\ \rm d}x \\ =
\int_{\mathcal{P}_\theta(\Omega)} \sum_{]\alpha,\beta[\in \mathcal{I}_\theta(y)} \min(\varepsilon,\beta - \alpha) \int_\alpha^\beta \int_0^{\min(\varepsilon,\beta-\alpha)}|\partial_\theta u (y+\beta\theta -s\theta) |^2{\ \rm d}s{\ \rm d}t{\ \rm d}y.   
\end{multline*}
We then have, up to the change of variable $s \mapsto \beta - s$,
\begin{align*}
\int_\alpha^\beta \int_0^{\min(\varepsilon,\beta-\alpha)}|\partial_\theta u (y+\beta\theta -s\theta) |^2{\ \rm d}s{\ \rm d}t & \le
{\rm diam(\Omega)} \int_{\beta - \min(\varepsilon,\beta-\alpha)}^\beta|\partial_\theta u (y+s\theta) |^2{\ \rm d}s \\
& \le {\rm diam(\Omega)} \int_{\alpha}^\beta|\partial_\theta u (y+s\theta) |^2{\ \rm d}s.
\end{align*}
Hence we obtain, since $\min(\varepsilon,\beta - \alpha)\le \varepsilon$,
\begin{align*}
 & \int_\Omega \min(\varepsilon,\ell_{\minus\theta}(x)) \int_0^{\min(\varepsilon,\ell_{\minus\theta}(x))} |\partial_\theta u (\Phi_\theta(x) -s\theta) |^2{\ \rm d}s{\ \rm d}x \\ \le \ & \varepsilon\ {\rm diam(\Omega)}
\int_{\mathcal{P}_\theta(\Omega)} \sum_{]\alpha,\beta[\in \mathcal{I}_\theta(y)} \int_\alpha^\beta |\partial_\theta u (y+s\theta) |^2{\ \rm d}s{\ \rm d}y\\
= \ & \varepsilon\ {\rm diam(\Omega)}\int_\Omega |\partial_\theta u (x) |^2{\ \rm d}x,
\end{align*}
which concludes the proof.
 \end{proof}
 \begin{remark}\label{rem:hunwtheta}
     We observe that Theorem \ref{thm:lebpoints} applies for all function of $H^1(\Omega)$ and for all $\theta\in\mathbb{S}^{d-1}$, since $H^1(\Omega)\subset W_\theta(\Omega)$.
 \end{remark}

\section{Omnidirectional trace operator on $H^1(\Omega)$}\label{sec:tracehun}

\subsection{Definition of $H_{\rm tr}^1(\Omega)$}

Let us observe that
\[ H^1(\Omega) = \bigcap_{\theta\in{\mathbb{S}^{d-1}}} W_\theta(\Omega), \]
and that, for all $\theta\in{\mathbb{S}^{d-1}}$ and $u\in H^1(\Omega)$, we have $\Vert u\Vert_\theta \le \Vert u\Vert_{H^1(\Omega)}$. 
We then denote by
\begin{equation}\label{eq:defdomegatilde}
  \partial_{\mathbb{S}}\Omega  = \bigcup_{\theta\in{\mathbb{S}^{d-1}}} \partial_\theta \Omega \subset\partial\Omega.
\end{equation}
This leads to the following definition.

\begin{definition}
    Let ${\mathcal{L}}^2(\partial\Omega,(\mu_\theta)_{\theta\in\mathbb{S}^{d-1}})$ denote the set of all functions $f:\partial\Omega\to\mathbb{R}$ such that
    \[ \exists M \ge 0, \ \forall \theta \in \mathbb{S}^{d-1}, \ \exists g_\theta \in \mathcal{L}^2(\partial\Omega,\mu_\theta), \ f = g_\theta \ \mu_\theta\text{-a.e. on } \partial\Omega \text{ and } \int_{\partial\Omega} |g_\theta|^2 {\ \rm d}\mu_\theta\le M. \]

    For all $f,g \in {\mathcal{L}}^2(\partial\Omega,(\mu_\theta)_{\theta\in\mathbb{S}^{d-1}})$, we say that $f$ is equivalent to $g$ if, for all $\theta \in \mathbb{S}^{d-1}$, $f(z) = g(z)$ for $\mu_\theta$-a.e. $z \in \partial\Omega$, and we define $L^2(\partial\Omega,(\mu_\theta)_{\theta\in\mathbb{S}^{d-1}})$ as the set of all equivalence classes of the elements of ${\mathcal{L}}^2(\partial\Omega,(\mu_\theta)_{\theta\in\mathbb{S}^{d-1}})$, normed by
    \[ \forall g\in L^2(\partial\Omega,(\mu_\theta)_{\theta\in\mathbb{S}^{d-1}}),\ \Vert g\Vert_{L^2(\partial\Omega,(\mu_\theta)_{\theta\in\mathbb{S}^{d-1}})}^2 = \sup_{\theta\in\mathbb{S}^{d-1}}\int_{\partial\Omega} |g|^2 {\ \rm d}\mu_\theta. \]
\end{definition}

Note that, for $f,g \in {\mathcal{L}}^2(\partial\Omega,(\mu_\theta)_{\theta\in\mathbb{S}^{d-1}})$,  $f$ is equivalent to $g$ if, for all $\theta\in \mathbb{S}^{d-1}$, $f = g$ $\mu_\theta$-a.e. in $\partial_{\mathbb{S}}\Omega $ (defined by \eqref{eq:defdomegatilde}), which is a dense subset of $\partial\Omega$ (see Lemma \ref{lem:closomegatilde}). The set $L^2(\partial\Omega,(\mu_\theta)_{\theta\in\mathbb{S}^{d-1}})$ is a Banach space (this is proved in Appendix \ref{sec:intfam}), but not a Hilbert space. 
Although a measure based on the integration of $\mu_\theta$ with respect to $\theta\in\mathbb{S}^{d-1}$ could lead to the definition of a Hilbert space, it would introduce an almost everywhere sense for $\theta$. In contrast, the framework resulting from the space $L^2(\partial\Omega,(\mu_\theta)_{\theta\in\mathbb{S}^{d-1}})$ leads to integration-by-parts formulas which hold for every $\theta\in\mathbb{S}^{d-1}$, not merely for almost every $\theta$.

\begin{definition}[Omnidirectional trace operator on $H^1(\Omega)$]\label{def:tracemono}
    Let $u \in H^1(\Omega)$. We say that $u$ has an omnidirectional trace on $\partial \Omega$ if there exists a function $g \in L^2(\partial\Omega,(\mu_\theta)_{\theta\in\mathbb{S}^{d-1}})$ such that for all $\theta \in {\mathbb{S}^{d-1}}$, we have $\gamma_\theta u(z) = g(z)$ for $\mu_\theta$-a.e. $z \in \partial\Omega$. In this case, we denote $g = {\rm tr}(u)$, and we define $H_{\rm tr}^1(\Omega)$ as the set of all $u \in H^1(\Omega)$ having an omnidirectional trace on $\partial \Omega$.
\end{definition}

\begin{theorem}[$H_{\rm tr}^1(\Omega)$ is closed]\label{thm:huntclosed}
    The space $H_{\rm tr}^1(\Omega)$ is closed, hence a Hilbert space. Moreover, the operator ${\rm tr} : H_{\rm tr}^1(\Omega) \to L^2(\partial\Omega,(\mu_\theta)_{\theta\in\mathbb{S}^{d-1}})$ is continuous and, for all $u,v \in H_{\rm tr}^1(\Omega)$ and $\theta\in{\mathbb{S}^{d-1}}$, we have:
    \begin{equation}\label{eq:intbypartshun}
        \int_\Omega \Big( u(x) \partial_\theta v(x) +v(x) \partial_\theta u(x)\Big){\ \rm d}x = \int_{\partial\Omega} \frac{{\rm tr }(u)(z) {\rm tr }(v)(z) - {\rm tr }(u)(\Sym(z)){\rm tr }(v)(\Sym(z))}{\ell_{\minus\theta}(z)} {\ \rm d}\mu_\theta(z).
\end{equation}
\end{theorem}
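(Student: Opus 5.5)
The plan is to combine the uniform bound \eqref{eq:ineqtracezero} with the completeness of $L^2(\partial\Omega,(\mu_\theta)_{\theta\in\mathbb{S}^{d-1}})$ (proved in Appendix \ref{sec:intfam}), and to transfer the integration-by-parts formula of Theorem \ref{thm:ipp} directly.

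\textbf{Linearity and continuity.} First observe that $H^1_{\rm tr}(\Omega)$ is a linear subspace and that ${\rm tr}$ is linear. Indeed, $\gamma_\theta$ is linear: if $g_1$ and $g_2$ are directional traces of $u_1$ and $u_2$, then $g_1+\lambda g_2$ is a directional trace of $u_1+\lambda u_2$, by intersecting the sets $A$ of Definition \ref{def:dirtrace} and using linearity of the continuous representatives. Hence ${\rm tr}(u_1)+\lambda\,{\rm tr}(u_2)$ agrees $\mu_\theta$-a.e. with $\gamma_\theta(u_1+\lambda u_2)$ for every $\theta$.

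For $u\in H^1_{\rm tr}(\Omega)$ and every $\theta$, Proposition \ref{prop:tracedirectionnelle} together with $\Vert u\Vert_\theta\le \Vert u\Vert_{H^1(\Omega)}$ gives
\[ \int_{\partial\Omega}|{\rm tr}(u)|^2{\ \rm d}\mu_\theta\le 2\max(1,{\rm diam}(\Omega)^2)\Vert u\Vert_{H^1(\Omega)}^2 . \]
Taking the supremum over $\theta$ yields continuity of ${\rm tr}$, with a bound independent of the direction.

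\textbf{Closedness.} Let $u_n\in H^1_{\rm tr}(\Omega)$ with $u_n\to u$ in $H^1(\Omega)$. By the bound above, $({\rm tr}(u_n))_n$ is Cauchy in $L^2(\partial\Omega,(\mu_\theta)_{\theta})$. By completeness it converges to some $g$ in that space. Fix $\theta$. Then ${\rm tr}(u_n)\to g$ in $L^2(\partial\Omega,\mu_\theta)$. On the other hand, applying \eqref{eq:ineqtracezero} to $u_n-u$ and using linearity of $\gamma_\theta$, we get $\gamma_\theta u_n=\gamma_\theta({\rm tr}\text{-class})\to\gamma_\theta u$ in $L^2(\partial\Omega,\mu_\theta)$. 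Limits in $L^2(\partial\Omega,\mu_\theta)$ are unique, so $\gamma_\theta u=g$ $\mu_\theta$-a.e. Since $\theta$ was arbitrary, $u\in H^1_{\rm tr}(\Omega)$ and ${\rm tr}(u)=g$. A closed subspace of the Hilbert space $H^1(\Omega)$ is a Hilbert space.

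The main subtlety in this step is the representative issue. The limit $g$ is a single function representing an equivalence class that must work for all $\theta$ simultaneously. This is exactly what the completeness result in Appendix \ref{sec:intfam} provides, so I would invoke it rather than try to construct $g$ by hand. The point is that one cannot simply pick, for each $\theta$, the $\mu_\theta$-limit separately, since there are uncountably many directions and the measures $\mu_\theta$ are mutually singular in general.

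\textbf{Integration by parts.} Apply Theorem \ref{thm:ipp} to $u,v\in W_\theta(\Omega)$. On the right-hand side, replace $\gamma_\theta u(z)$ by ${\rm tr}(u)(z)$, which is allowed $\mu_\theta$-a.e.

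For the terms $\gamma_{\minus\theta}u(\Sym(z))$, the replacement is valid provided that, for a $\mu_{\minus\theta}$-null set $N$, the set $\Sym^{-1}(N)$ is $\mu_\theta$-null. This follows from Lemma \ref{lem:visavis}: $\Sym$ is a bijection $\partial_\theta\Omega\to\partial_{\minus\theta}\Omega$ preserving the projection $\mathcal{P}_\theta$. Combined with Corollary \ref{cor:negl2}, which characterizes null sets through $\lambda^{d-1}$-negligibility of projections, this transports null sets between $\mu_{\minus\theta}$ and $\mu_\theta$.

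The integrand is integrable by the argument in the proof of Theorem \ref{thm:ipp}. This gives \eqref{eq:intbypartshun}.
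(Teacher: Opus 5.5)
Your proposal is correct and follows the same route as the paper. You use the uniform bound \eqref{eq:ineqtracezero}, the completeness of $L^2(\partial\Omega,(\mu_\theta)_{\theta\in\mathbb{S}^{d-1}})$ from Theorem \ref{theo:lpbanach}, the identification of the limit with $\gamma_\theta u$ direction by direction, and then Theorem \ref{thm:ipp}. You also make explicit two points the paper leaves implicit: the linearity of $\gamma_\theta$ and ${\rm tr}$, and the fact that $\mu_{\minus\theta}$-null sets pull back under $\Sym$ to $\mu_\theta$-null sets (this also follows at once from the identity $\mu_{\minus\theta}(A)=\mu_\theta(\Sym^{-1}(A))$ in Lemma \ref{lem:visavis}). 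One side remark is inaccurate: the measures $\mu_\theta$ are not mutually singular in general (for a Lipschitz domain, Lemma \ref{exa:cun} shows they are all absolutely continuous with respect to $\mathcal{H}^{d-1}$ and overlap for nearby directions), but the remark plays no role in your argument.
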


\begin{proof}
    Recall that, using \eqref{eq:ineqtracezero} in Proposition \ref{prop:tracedirectionnelle}, we have, for all $u\in H_{\rm tr}^1(\Omega)$ and any $\theta \in \mathbb{S}^{d-1}$,
    \[ \int_{\partial\Omega} |{\rm tr }(u)|^2 {\ \rm d}\mu_\theta 
        = \int_{\partial\Omega} |\gamma_\theta u|^2 {\ \rm d}\mu_\theta 
        \le 2 \max(1,{\rm diam}(\Omega)^2)\Vert u\Vert_{H^1(\Omega)}^2. \]
    We get that, for every Cauchy sequence $(u_n)_{n\in\mathbb{N}}$ of elements of $H_{\rm tr}^1(\Omega)$ with limit $u\in H^1(\Omega)$, the sequence $({\rm tr }(u_n))_{n\in\mathbb{N}}$ converges in $L^2(\partial\Omega,(\mu_\theta)_{\theta\in\mathbb{S}^{d-1}})$, which is a Banach space by Theorem \ref{theo:lpbanach} in Appendix \ref{sec:intfam}. Denoting by $g$ its limit, we deduce that $g = \gamma_\theta u$ $\mu_\theta$-a.e. on $\partial\Omega$ for all $\theta \in \mathbb{S}^{d-1}$, which shows that $u\in H_{\rm tr}^1(\Omega)$. Equation \eqref{eq:intbypartshun} then follows from Theorem \ref{thm:ipp}.
\end{proof}

We deduce the following corollary.

\begin{corollary}\label{cor:inclusionshuntr}
    Recall that $H^1_0(\Omega)$ denotes the closure of $C^\infty_c(\Omega)$ in $H^1(\Omega)$, and that $\widetilde{H}^1(\Omega)$ denotes the closure of $H^1(\Omega) \cap C^0(\overline{\Omega})$ in $H^1(\Omega)$. We define $H^1_{\rm tr, 0}(\Omega) = \{u \in H_{\rm tr}^1(\Omega) : {\rm tr}(u) = 0\}$. Then
    \[ H^1_0(\Omega)  \subset H_{\rm tr,0}^1(\Omega)\subset \widetilde{H}^1(\Omega) \subset H^1_{\rm tr}(\Omega). \]
\end{corollary}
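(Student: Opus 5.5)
The corollary consists of three inclusions. I would prove each one directly. The only tools needed are Definition \ref{def:dirtrace}, the estimate \eqref{eq:ineqtracezero}, and Theorem \ref{thm:huntclosed}, which gives that $H^1_{\rm tr}(\Omega)$ is closed and ${\rm tr}$ is continuous. A consequence of the latter is that $H^1_{\rm tr,0}(\Omega)=\ker({\rm tr})$ is a closed subspace of $H^1(\Omega)$.

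\emph{Inclusion $\widetilde{H}^1(\Omega)\subset H^1_{\rm tr}(\Omega)$.} Take $u\in H^1(\Omega)\cap C^0(\overline{\Omega})$ and set $g=u_{|\partial\Omega}$.
\begin{itemize}
\item For every $\theta$, every $y\in\mathcal{P}_\theta(\Omega)$ and every $]\alpha,\beta[\in\mathcal{I}_\theta(y)$, the continuous representative of $u_{\theta,y}$ on $]\alpha,\beta]$ is $s\mapsto u(s\theta+y)$ itself.
\item Its value at $\beta$ is therefore $g(\beta\theta+y)$, so $g$ is a directional trace of $u$ in every direction.
\item The bound \eqref{eq:ineqtracezero} gives $\int|g|^2{\rm d}\mu_\theta\le 2\max(1,{\rm diam}(\Omega)^2)\Vert u\Vert_{H^1}^2$ uniformly in $\theta$. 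Hence $g\in L^2(\partial\Omega,(\mu_\theta)_\theta)$, and so $u\in H^1_{\rm tr}(\Omega)$.
\end{itemize}
Since $H^1_{\rm tr}(\Omega)$ is closed by Theorem \ref{thm:huntclosed}, it contains the closure $\widetilde{H}^1(\Omega)$.

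\emph{Inclusion $H^1_0(\Omega)\subset H^1_{\rm tr,0}(\Omega)$.} For $\varphi\in C^\infty_c(\Omega)$, on every line $\varphi$ vanishes in a neighbourhood of each endpoint $\beta$ of each interval of $\mathcal{I}_\theta(y)$. Hence $g\equiv0$ is a directional trace in every direction, so ${\rm tr}(\varphi)=0$. Because $H^1_{\rm tr,0}(\Omega)$ is closed, it contains the closure $H^1_0(\Omega)$.

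\emph{Inclusion $H^1_{\rm tr,0}(\Omega)\subset \widetilde{H}^1(\Omega)$.} This is the main step. Let $u\in H^1_{\rm tr,0}(\Omega)$ and let $\bar u$ be its extension by $0$ to $\mathbb{R}^d$. I would show that $\bar u\in H^1(\mathbb{R}^d)$ with $\partial_\theta\bar u=\overline{\partial_\theta u}$, the zero extension of $\partial_\theta u$, for every $\theta$. Given that, the mollifications $\bar u*\rho_n$ are smooth on $\mathbb{R}^d$. Their restrictions lie in $H^1(\Omega)\cap C^0(\overline\Omega)$ and converge to $u$ in $H^1(\Omega)$, which proves the inclusion.

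To prove $\partial_\theta\bar u=\overline{\partial_\theta u}$, fix $\theta$ and $\psi\in C^\infty_c(\mathbb{R}^d)$. By Fubini,
\[
\int_{\mathbb{R}^d}\bar u\,\partial_\theta\psi=\int_{\mathcal{P}_\theta(\Omega)}\sum_{]\alpha,\beta[\in\mathcal{I}_\theta(y)}\int_\alpha^\beta u_{\theta,y}(s)\,\partial_\theta\psi(y+s\theta)\,{\rm d}s\,{\rm d}y .
\]
For a.e. $y$, the one-dimensional integration by parts on $]\alpha,\beta[$ produces the boundary term $u_{\theta,y}(\beta)\psi(y+\beta\theta)-u_{\theta,y}(\alpha)\psi(y+\alpha\theta)$. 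Both values vanish:
\begin{itemize}
\item $u_{\theta,y}(\beta)=\gamma_\theta u(y+\beta\theta)={\rm tr}(u)(y+\beta\theta)=0$;
\item $u_{\theta,y}(\alpha)=\gamma_{\minus\theta}u(y+\alpha\theta)=0$, using that the trace is omnidirectional, so it applies in direction $-\theta$ as well.
\end{itemize}
These equalities hold for a.e. $y$, via the correspondence between $\mu_{\pm\theta}$-null sets and $\lambda^{d-1}$-null sets of $y$ (Corollary \ref{cor:negl2}, Lemma \ref{lem:negl}). Summing over the countably many intervals is justified by dominated convergence, since $\sum\int_\alpha^\beta|u_{\theta,y}\,\partial_\theta\psi|\le\Vert u_{\theta,y}\Vert_{L^1}\Vert\partial_\theta\psi\Vert_\infty$, which is integrable in $y$. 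The same argument with $\partial_\theta u$ controls the other side, and we obtain $\int\bar u\,\partial_\theta\psi=-\int\overline{\partial_\theta u}\,\psi$.

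The obstacle I expect is precisely this bookkeeping: making sure the vanishing of both endpoint values holds for a.e. $y$ simultaneously for all intervals of $\mathcal{I}_\theta(y)$. This is where the hypothesis ${\rm tr}(u)=0$ for both $\mu_\theta$ and $\mu_{\minus\theta}$ is essential; a merely directional zero trace would not suffice.
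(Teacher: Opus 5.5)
Your proposal is correct and follows essentially the paper's route. The paper also regards the two outer inclusions as immediate from the closedness in Theorem \ref{thm:huntclosed}. It proves $H^1_{\rm tr,0}(\Omega)\subset\widetilde{H}^1(\Omega)$ by mollifying the zero extension of $u$: the vanishing of ${\rm tr}(u)$ $\mu_\theta$-a.e. and $\mu_{\minus\theta}$-a.e. (through $\Sym$) kills the boundary term, so $\partial_\theta u_n$ is the mollification of the zero extension of $\partial_\theta u$.

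The only difference is that the paper gets this by invoking \eqref{eq:intbypartshun} with $v=\rho(n(x-\cdot))$. You instead redo the line-by-line integration by parts, which is exactly the proof of Theorem \ref{thm:ipp}, and you also justify $H^1(\Omega)\cap C^0(\overline{\Omega})\subset H^1_{\rm tr}(\Omega)$, which the paper uses without proof.
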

\begin{proof}
    The only additional point to check is that $H_{\rm tr,0}^1(\Omega)\subset \widetilde{H}^1(\Omega)$.
     We consider a  function $\rho \in C^\infty_c(\mathbb{R}^d)$ such that
 \[ \rho \ge 0, \quad \big(|x| > 1 \implies \rho(x) = 0\big), \quad \int_{\mathbb{R}^d} \rho(x) \ {\rm d} x = 1. \]
 Let $u\in H^1_{\rm tr, 0}(\Omega)$. We first extend $u$  and $\partial_\theta u$ for all  $\theta\in {\mathbb{S}^{d-1}}$ by $0$ in $\mathbb{R}^d\setminus \Omega$. For all $n\in\mathbb{N}$, we define $u_n$ as the standard regularisation by convolution:
 \[
 \forall x\in \mathbb{R}^d,\ u_n(x) = n^d\int_{\mathbb{R}^d}u(y)\rho(n(x-y)){\rm d} y.
 \]
 Then $u_n$ converges to $u$ in $L^2(\mathbb{R}^d)$. Let $\theta\in {\mathbb{S}^{d-1}}$ be given. For all $x\in \mathbb{R}^d$ and $n\in\mathbb{N}$, we have, using the integration-by-parts formula \eqref{eq:intbypartshun} (since $\rho(n(x-\cdot))$ belongs to $C^0(\overline{\Omega})\cap H^1(\Omega)\subset H^1_{\rm tr}(\Omega)$) and the fact that ${\rm tr}(u) =0$ $\mu_\theta$-a.e. on $\partial\Omega$,
 \begin{multline*}
     \partial_\theta u_n(x) = n^d\int_{\mathbb{R}^d}u(y)\partial_\theta \rho(n(x-y)){\rm d} y
 = n^d\int_{\Omega}u(y)\partial_\theta \rho(n(x-y)){\rm d} y\\
 = n^d\int_{\Omega}\partial_\theta u(y) \rho(n(x-y)){\rm d} y = n^d\int_{\mathbb{R}^d}\partial_\theta u(y) \rho(n(x-y)){\rm d} y.
 \end{multline*}
 This proves that $\partial_\theta u_n$ converges to $\partial_\theta u$ in $L^2(\mathbb{R}^d)$, which concludes the proof that $u\in  \widetilde{H}^1(\Omega)$ since, also denoting by $u_n$ the restriction of $u_n$ to $\overline{\Omega}$, we have $u_n\in C^0(\overline{\Omega})\cap H^1(\Omega)$.
\end{proof}

\begin{remark}
    In the one-dimensional case, we show in Theorem \ref{thm:H1tilde=H1} that $\widetilde{H}^1(\Omega) = H^1_{\rm tr}(\Omega)$, and in Proposition \ref{Prophunzeqhunztr1d} that $H^1_0(\Omega) = H_{\rm tr,0}^1(\Omega)$.
    However, there exist situations in which $H^1_0(\Omega) \neq H_{\rm tr,0}^1(\Omega)$ (Section \ref{sec:cantcirc}) and $\widetilde{H}^1(\Omega) \neq H^1_{\rm tr}(\Omega)$ (Section \ref{sec:bicone}).
\end{remark}

Note that $H_{\rm tr,0}^1(\Omega) = \{ u \in H^1(\Omega) : \forall \theta \in \mathbb{S}^{d-1}, \ \gamma_\theta u = 0\}$. Hence, we have the following lemma.

\begin{lemma}
    For all $u \in H^1_{\rm tr, 0}(\Omega)$, then
    \begin{equation}\label{eq:poinhun}
        \Vert u\Vert_{L^2(\Omega)} 
        \le {\rm diam}(\Omega) \Vert \nabla u\Vert_{L^2(\Omega)}.
    \end{equation}
    Therefore the space $H_{\rm tr,0}^1(\Omega)$ is a Hilbert space, with the scalar product defined by 
    \[ \langle u,v\rangle := 
        \int_\Omega \nabla u(x)\cdot\nabla v(x){\ \rm d}x. \]
\end{lemma}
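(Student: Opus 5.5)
The Poincaré inequality \eqref{eq:poinhun} follows directly from the directional Poincaré inequality \eqref{eq:pointheta} of Proposition \ref{prop:tracedirectionnelle}. Fix any $\theta \in \mathbb{S}^{d-1}$. Let $u \in H^1_{\rm tr,0}(\Omega)$; by the remark preceding the lemma, $\gamma_\theta u = 0$ $\mu_\theta$-a.e. on $\partial\Omega$, and $u \in W_\theta(\Omega)$ since $H^1(\Omega)\subset W_\theta(\Omega)$. Proposition \ref{prop:tracedirectionnelle} then gives $\Vert u\Vert_{L^2(\Omega)} \le {\rm diam}(\Omega)\Vert \partial_\theta u\Vert_{L^2(\Omega)}$. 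Since $\partial_\theta u = \theta\cdot\nabla u$ a.e. and $|\theta|=1$, Cauchy--Schwarz pointwise gives $|\partial_\theta u|\le|\nabla u|$. Hence $\Vert \partial_\theta u\Vert_{L^2(\Omega)}\le \Vert \nabla u\Vert_{L^2(\Omega)}$, which yields \eqref{eq:poinhun}. A single direction is enough; for $\theta=e_1$, $\partial_\theta u$ is the usual weak partial derivative.

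For the Hilbert structure, I first check that $H^1_{\rm tr,0}(\Omega)$ is a closed subspace of $H^1(\Omega)$. It is a linear subspace, since each $\gamma_\theta$ is linear, directly from Definition \ref{def:dirtrace}. For closedness, take $u_n\to u$ in $H^1(\Omega)$ with $u_n\in H^1_{\rm tr,0}(\Omega)$. Applying \eqref{eq:ineqtracezero} to $u_n-u$ gives $\int_{\partial\Omega}(\gamma_\theta u)^2\,{\rm d}\mu_\theta = \int_{\partial\Omega}(\gamma_\theta (u - u_n))^2\,{\rm d}\mu_\theta \le 2\max(1,{\rm diam}(\Omega)^2)\Vert u-u_n\Vert_{H^1(\Omega)}^2\to 0$. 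So $\gamma_\theta u=0$ $\mu_\theta$-a.e. for every $\theta$, and $u\in H^1_{\rm tr,0}(\Omega)$. Alternatively, this is the kernel of the continuous operator ${\rm tr}$ on the closed space $H^1_{\rm tr}(\Omega)$ (Theorem \ref{thm:huntclosed}). Thus $H^1_{\rm tr,0}(\Omega)$ is a Hilbert space for the full $H^1$ inner product.

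It remains to see that $\langle u,v\rangle=\int_\Omega\nabla u\cdot\nabla v$ is an inner product on $H^1_{\rm tr,0}(\Omega)$ inducing an equivalent norm. By \eqref{eq:poinhun}, $\Vert \nabla u\Vert^2_{L^2}\le\Vert u\Vert^2_{H^1}\le(1+{\rm diam}(\Omega)^2)\Vert\nabla u\Vert^2_{L^2}$. Positive definiteness and completeness therefore transfer from the $H^1$ norm. No step is a serious obstacle; the only point needing care is the closedness argument, which is handled by the continuity estimate \eqref{eq:ineqtracezero}.
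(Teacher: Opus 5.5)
Your proof is correct and follows the paper's argument. The paper proves \eqref{eq:poinhun} from \eqref{eq:pointheta} in a single direction together with $|\partial_\theta u|\le|\nabla u|$, obtains closedness of $H^1_{\rm tr,0}(\Omega)$ from the continuity of ${\rm tr}$, and uses \eqref{eq:poinhun} to see that $\int_\Omega \nabla u\cdot\nabla v\,{\rm d}x$ is a scalar product. Your direct closedness argument, which applies \eqref{eq:ineqtracezero} to $u-u_n$ and uses the linearity of $\gamma_\theta$, is a harmless variant that avoids invoking Theorem \ref{thm:huntclosed}.
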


\begin{proof}
    For all $\theta\in\mathbb{S}^{d-1}$, $|\partial_\theta u| \le |\nabla u|$ a.e. in $\Omega$. Then \eqref{eq:poinhun} is a consequence of \eqref{eq:pointheta}, since it is satisfied provided that there exists $\theta \in \mathbb{S}^{d-1}$ with $\gamma_\theta u = 0$ $\mu_\theta$-a.e. on $\partial \Omega$. We then observe that $H_{\rm tr,0}^1(\Omega)\subset H_{\rm tr}^1(\Omega)$ is closed since ${\rm tr }:H_{\rm tr}^1(\Omega)\to L^2(\partial\Omega,(\mu_\theta)_{\theta\in\mathbb{S}^{d-1}})$ is continuous. The fact that $\langle \cdot, \cdot\rangle$ is a scalar product is a consequence of \eqref{eq:poinhun}.
\end{proof}

We finally extend Theorem \ref{thm:lebpoints} to all directions.
\begin{theorem} [Omnidirectional trace and directional Lebesgue points]\label{thm:omnilebpoints}
Let  $u\in H^1_{\rm tr}(\Omega)$. For all $\theta \in \mathbb{S}^{d-1}$ and $\varepsilon>0$ and for $\mu_\theta$ a.e. $z\in \partial_{\theta}\Omega$, we denote by
\[
u_{\theta,\varepsilon}(z) =  \frac 1 {\min(\varepsilon,\ell_{\minus\theta}(z))} \int_0^{\min(\varepsilon,\ell_{\minus\theta}(z))} u(z - s\theta) {\ \rm d}s.
\]
Then we have
\begin{equation}\label{eq:omnicvppbord}
 \hbox{For all $\theta \in \mathbb{S}^{d-1}$ and for $\mu_\theta$-a.e. }z\in \partial_{\theta}\Omega,\ \lim_{\varepsilon\to 0} u_{\theta,\varepsilon}(z) = {\rm tr}(u)(z)
\end{equation}
and
\begin{equation}\label{eq:omniintbord}
\max_{\theta\in \mathbb{S}^{d-1}} \int_{\partial_{\theta}\Omega} \Big( {\rm tr}(u)(z) - u_{\theta,\varepsilon}(z)\Big)^2 {\ \rm d}\mu_\theta(z) \le \varepsilon\ {\rm diam}(\Omega)\Vert\nabla u\Vert_{L^2(\Omega)}^2.
\end{equation}
 \end{theorem}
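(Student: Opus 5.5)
This is essentially a direct consequence of Theorem \ref{thm:lebpoints}, applied direction by direction (Remark \ref{rem:hunwtheta}), together with Definition \ref{def:tracemono}. Fix $u\in H^1_{\rm tr}(\Omega)$ and $\theta\in\mathbb{S}^{d-1}$. Since $H^1(\Omega)\subset W_\theta(\Omega)$, Theorem \ref{thm:lebpoints} gives \eqref{eq:cvppbord} and \eqref{eq:intbord} with $\gamma_\theta u$ in place of ${\rm tr}(u)$. By Definition \ref{def:tracemono}, ${\rm tr}(u)=\gamma_\theta u$ $\mu_\theta$-a.e. on $\partial\Omega$. The union of the two $\mu_\theta$-negligible exceptional sets is still $\mu_\theta$-negligible. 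Outside this union we get $\lim_{\varepsilon\to0}u_{\theta,\varepsilon}(z)={\rm tr}(u)(z)$, which is \eqref{eq:omnicvppbord}. The exceptional set is allowed to depend on $\theta$, exactly as the statement permits.

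For \eqref{eq:omniintbord}, the integrand $({\rm tr}(u)-u_{\theta,\varepsilon})^2$ coincides $\mu_\theta$-a.e. with $(\gamma_\theta u-u_{\theta,\varepsilon})^2$. Hence the integral equals the one in \eqref{eq:intbord}, which is bounded by $\varepsilon\,{\rm diam}(\Omega)\Vert\partial_\theta u\Vert_{L^2(\Omega)}^2$. Since $\partial_\theta u=\theta\cdot\nabla u$, Cauchy--Schwarz gives $|\partial_\theta u|\le|\nabla u|$ a.e., so we get the $\theta$-independent bound $\varepsilon\,{\rm diam}(\Omega)\Vert\nabla u\Vert_{L^2(\Omega)}^2$. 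Taking the supremum over $\theta$ yields the inequality.

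The only delicate point is the ``$\max$'' in \eqref{eq:omniintbord}, which asserts that the supremum over $\theta$ is attained. I expect this to be the main obstacle, since it would need some continuity in $\theta$ of the integral, which looks unavailable for general $\Omega$. I would therefore read it as a supremum, which is all the uniform bound gives. If attainment were genuinely required, I would try to use the compactness of $\mathbb{S}^{d-1}$ together with continuity of $\theta\mapsto\int_\Omega F(\Phi_\theta(x),\theta)\,{\rm d}x$, but I would not pursue this. A minor check is that $u_{\theta,\varepsilon}(z)$ is well defined for $\mu_\theta$-a.e. $z$, which is already provided in Theorem \ref{thm:lebpoints}.
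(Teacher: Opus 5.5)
Your proof is correct and follows the paper's own one-line argument: apply Theorem \ref{thm:lebpoints} for each $\theta$, identify $\gamma_\theta u$ with ${\rm tr}(u)$ $\mu_\theta$-a.e., and use $\Vert\partial_\theta u\Vert_{L^2(\Omega)}\le\Vert\nabla u\Vert_{L^2(\Omega)}$. Your reading of the $\max$ as a supremum matches what this argument actually proves; the paper does not address whether the maximum over $\theta$ is attained either.
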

 \begin{proof}
     We apply Theorem \ref{thm:lebpoints} and we use $\Vert\partial_\theta u\Vert_{L^2(\Omega)}\le \Vert\nabla u\Vert_{L^2(\Omega)}$.
 \end{proof}
\subsection{The one-dimensional case} \label{sec:oned}

In this section, we consider a domain $\Omega \subset \mathbb R$ which is open and bounded. We first study the isolated points of the boundary of $\Omega$.

\begin{lemma}\label{lem:H1tr=H1}
    Let $\Omega \subset \mathbb R$ be bounded and open. Let $\mathcal{N}(\Omega)$ be the (countable) set of the isolated points of $\partial\Omega$, defined by
    \begin{equation}\label{eq:H1tr=H1}
        \mathcal{N}(\Omega) = \{z \in \partial\Omega, \; \exists r > 0, \; ]z-r, z+r[ \setminus \{z\}  \subset \Omega\}.
    \end{equation}
    Then denoting by $\Omega^* = \Omega \cup \mathcal N(\Omega)$, we have $H^1_{\rm tr}(\Omega) = \{u\in H^1(\Omega) : \exists v \in H^1(\Omega^*), \ u = v \hbox{ a.e. in } \Omega\}$.
\end{lemma}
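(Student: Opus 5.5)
We work in the 1D setting with $\mathbb{S}^0=\{-1,1\}$. Write $\Omega=\bigcup_{k}\,]a_k,b_k[$ as a countable disjoint union of open intervals. Then $\partial_{1}\Omega=\{b_k\}$ with $\mu_1(\{b_k\})=b_k-a_k$, and $\partial_{\minus 1}\Omega=\{a_k\}$ with $\mu_{\minus 1}(\{a_k\})=b_k-a_k$. Both measures are purely atomic, carried by the endpoints of the component intervals, and every atom has positive mass. So a directional trace is an honest pointwise function on these endpoints:
\[
\gamma_1 u(b_k)=u_k(b_k),\qquad \gamma_{\minus 1}u(a_k)=u_k(a_k),
\]
where $u_k$ is the continuous representative of $u$ on $[a_k,b_k]$.

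Consequently, $u\in H^1_{\rm tr}(\Omega)$ if and only if there is a function $g$ on $\partial\Omega$ agreeing with $\gamma_1u$ on $\{b_k\}$ and with $\gamma_{\minus1}u$ on $\{a_k\}$. The bound on the norm in $L^2(\partial\Omega,(\mu_\theta)_\theta)$ is automatic from \eqref{eq:ineqtracezero}. Such a $g$ exists precisely when the two prescriptions do not conflict. A point can be both a right endpoint $b_k$ and a left endpoint $a_j$ only if $b_k=a_j=z$, which means $]a_k,z[\cup]z,b_j[\subset\Omega$ and $z\in\partial\Omega$, i.e. $z\in\mathcal N(\Omega)$. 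Conversely, each $z\in\mathcal N(\Omega)$ is of this form: for small $r$, the sets $]z-r,z[$ and $]z,z+r[$ lie in single components. Hence
\[
u\in H^1_{\rm tr}(\Omega)\iff u_k(z)=u_j(z)\ \text{ for every } z=b_k=a_j\in\mathcal N(\Omega).
\]
$\mathcal N(\Omega)$ is countable because each of its points is the right endpoint of a distinct component.

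It remains to show this matching condition is equivalent to $u$ being the restriction of some $v\in H^1(\Omega^*)$. Note that $\Omega^*$ is open, since near $z\in\mathcal N(\Omega)$ the punctured neighbourhood lies in $\Omega$. Each component of $\Omega^*$ is an interval $I$ obtained by gluing consecutive components of $\Omega$ at points of $\mathcal N(\Omega)$; inside $I$ these points form a discrete, hence countable, set with no accumulation point in $I$, though possibly accumulating at $\partial I$.

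For ($\Leftarrow$): if $v\in H^1(\Omega^*)$, then $v$ has a continuous representative on each component of $\Omega^*$, so at every $z\in\mathcal N(\Omega)$ the one-sided limits $u_k(z)$ and $u_j(z)$ coincide.

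For ($\Rightarrow$): define $v$ on each component $I$ of $\Omega^*$ as the function equal to $u_k$ on each $]a_k,b_k[\subset I$. The matching condition makes $v$ continuous on $I$, since every point of $I\setminus\Omega$ is an isolated gluing point. Also, $v$ is absolutely continuous on every compact subinterval of $I$, because such a subinterval meets only finitely many gluing points: they are discrete in $I$. Hence the distributional derivative of $v$ on $I$ is the piecewise derivative $u'$, with no Dirac masses. Since $\{z\}$ is Lebesgue-null, $\|v\|_{H^1(\Omega^*)}=\|u\|_{H^1(\Omega)}<\infty$.

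The main obstacle is this last step: checking carefully that no singular part appears in $v'$ even though infinitely many gluing points may accumulate at the ends of a component of $\Omega^*$. This is handled by testing against $\varphi\in C^\infty_c(I)$, whose support meets only finitely many gluing points. We then integrate by parts on each of the finitely many relevant pieces, and the boundary terms cancel pairwise by continuity of $v$.
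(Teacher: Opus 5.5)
Your proof is correct and follows the same route as the paper. Both identify $\mathcal{N}(\Omega)=\partial_1\Omega\cap\partial_{-1}\Omega$ as exactly the boundary points carrying positive mass for both $\mu_1$ and $\mu_{-1}$, so membership in $H^1_{\rm tr}(\Omega)$ reduces to matching the one-sided values there, which is equivalent to gluing into an element of $H^1(\Omega^*)$. You also supply details the paper leaves implicit: the atomic structure of $\mu_{\pm 1}$, the countability of $\mathcal{N}(\Omega)$, and the check that the glued function has no singular part in its derivative even when gluing points accumulate at the ends of a component of $\Omega^*$.
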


\begin{proof}
    We first notice that $\mathcal{N}(\Omega) = \partial_1 \Omega \cap \partial_{\minus 1}\Omega$. Hence, for all $v \in H^1(\Omega^*)$, assimilating $v$ with a continuous representative in $\Omega^*$, we may define $u\in H^1(\Omega)$ by $u=v$ a.e. in $\Omega$. We then have
    \[ {\rm tr}(u)(z) = \begin{cases}
        \gamma_1(u)(z) & \text{ if } z \in \partial_1 \Omega \setminus \mathcal{N}(\Omega), \\
        \gamma_{\minus 1}(u)(z) & \text{ if } z \in \partial_{\minus 1}\Omega \setminus \mathcal{N}(\Omega),\\
        v(z) & \text{ if } z \in \mathcal{N}(\Omega),
    \end{cases} \]
    so that for all $\theta \in \mathbb{S}^0 = \{-1,1\}$, ${\rm tr}(u) = \gamma_\theta(u)$. Therefore, $u \in H^1_{\rm tr}(\Omega)$. Conversely, if $u \in H^1_{\rm tr}(\Omega)$, we have $\gamma_1(u)(z) = \gamma_{\minus 1}(u)(z) $ for all $z \in  \mathcal{N}(\Omega)$, which means that $u = v$ a.e. in $\Omega$, where $v \in H^1(\Omega^*)$.
\end{proof}

The goal is now to show that, for every bounded open set $\Omega \subset \mathbb{R}$, then any $u \in H^1_{\rm tr}(\Omega)$ is the limit of a sequence of continuous elements on $\overline \Omega$ belonging to $H^1(\Omega)$, which means that $H^1_{\rm tr}(\Omega) = \widetilde{H}^1(\Omega)$.

\begin{theorem}\label{thm:H1tilde=H1}
    Let $\Omega$ be an open bounded subset of $\mathbb R$. Then $\widetilde{H}^1(\Omega) = H^1_{\rm tr}(\Omega)$.
\end{theorem}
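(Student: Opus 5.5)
By Corollary \ref{cor:inclusionshuntr}, the inclusion $\widetilde{H}^1(\Omega)\subset H^1_{\rm tr}(\Omega)$ already holds, so only the reverse inclusion needs proof. Take $u\in H^1_{\rm tr}(\Omega)$. By Lemma \ref{lem:H1tr=H1}, $u$ is the restriction of some $v\in H^1(\Omega^*)$, where $\Omega^*=\Omega\cup\mathcal N(\Omega)$. We identify $v$ with its continuous representative on each connected component of $\Omega^*$. Write $\Omega=\bigcup_{k}]a_k,b_k[$ as a countable disjoint union of intervals. On the closure $[a_k,b_k]$ of each interval, $u$ extends continuously, with endpoint values ${\rm tr}(u)(b_k)=\gamma_1 u(b_k)$ and ${\rm tr}(u)(a_k)=\gamma_{\minus1}u(a_k)$. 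These values are consistent at points of $\mathcal N(\Omega)$.

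The plan is to approximate $u$ by functions that vanish near the ``bad'' part of the boundary and are continuous on $\overline\Omega$. Fix $\varepsilon>0$. Since $\sum_k \|u\|_{H^1(]a_k,b_k[)}^2<\infty$, we can choose a finite set $F$ of indices so that the tail over $k\notin F$ has norm below $\varepsilon$. A naive truncation $u\chi_F$ fails, because the resulting function jumps at the finitely many endpoints $a_k,b_k$ ($k\in F$) whenever these are accumulation points of other intervals. The endpoints that are only isolated boundary points, where two intervals of $F$ meet, cause no problem, since $v$ is continuous there.

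The remedy is to correct $u$ near each problematic endpoint. On $]a_k,b_k[$ with $k\in F$, near a non-isolated endpoint, say $b_k$, replace $u$ by a function that decays linearly from ${\rm tr}(u)(b_k)$ to $0$ over a short subinterval $]b_k-\eta,b_k[$. To keep the $H^1$ error small, do not cut inside $]a_k,b_k[$ itself. Instead, extend \emph{outside}: the non-isolated endpoint $b_k$ is a limit of boundary points, and intervals of $\Omega$ lying in $]b_k,b_k+\eta[$ have small total measure. On those intervals, define $w$ to be a continuous function of $x$ on $\mathbb{R}$, for instance $c\,\max(0,1-|x-b_k|/\eta)$ with $c={\rm tr}(u)(b_k)$, restricted to $\Omega$. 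This function is continuous on $\overline\Omega$ and has $H^1(\Omega)$-norm squared at most $c^2(\lambda(\Omega\cap]b_k,b_k+\eta[)+\lambda(\Omega\cap]b_k,b_k+\eta[)/\eta^2)$.

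This last bound exposes the main obstacle: $\lambda(\Omega\cap]b_k,b_k+\eta[)/\eta^2$ need not be small, since it may behave like $1/\eta$. The cure is to use the freedom in the profile. Choose a continuous function $\phi_\eta$ on $\mathbb{R}$ with $\phi_\eta(b_k)=1$, support in $[b_k,b_k+\eta]$, and derivative supported in $]b_k,b_k+\eta[\setminus\Omega$, which has positive measure because $b_k$ is not isolated. Take $\phi_\eta$ to be a Cantor-like staircase, increasing only on the complement of $\Omega$. Concretely, let $\phi_\eta(x)=1-\lambda(]b_k,x[\setminus\Omega)/\lambda(]b_k,b_k+\eta[\setminus\Omega)$ for $x\in[b_k,b_k+\eta]$. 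Then $\phi_\eta$ is Lipschitz and is constant on each interval of $\Omega$, so its derivative vanishes a.e. on $\Omega$. Its $H^1(\Omega)$-norm is therefore at most $\lambda(\Omega\cap]b_k,b_k+\eta[)^{1/2}\to0$.

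Define $u_\varepsilon$ to equal $u$ on the intervals with indices in $F$ and $\sum c_j\phi_{\eta}^{(j)}$ near each non-isolated endpoint, with a symmetric construction on the left side, and $0$ elsewhere. Take $\eta$ small enough that the supports do not meet the other intervals of $F$. Then $u_\varepsilon$ is continuous on $\overline\Omega$ and lies in $H^1(\Omega)$, and $\|u-u_\varepsilon\|_{H^1(\Omega)}\lesssim\varepsilon$. This gives $u\in\widetilde{H}^1(\Omega)$.
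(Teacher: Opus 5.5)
The step that fails is your claim that $]b_k,b_k+\eta[\setminus\Omega$ has positive measure because $b_k$ is not isolated. Non-isolatedness only says that other boundary points accumulate at $b_k$. These points may form a Lebesgue-null set, and then your $\phi_\eta$ is $0/0$.

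In some such cases the gap can be repaired, for instance when the intervals of $\Omega$ to the right of $b_k$ are the gaps of a null Cantor set. Replace your Lipschitz profile by a singular staircase, given by Lemma~\ref{lem:escalierdiable}: continuous, monotone, and constant on every interval of $\Omega^*$ in the window. Its restriction to $\Omega$ has zero derivative, so your bound $c^2\lambda(\Omega\cap\,]b_k,b_k+\eta[)$ survives. This is exactly the device in the paper, which interpolates with such a staircase across each gap between finitely many selected components of $\Omega^*$.

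You should also select components of $\Omega^*$ rather than intervals of $\Omega$. As written, you never treat an isolated endpoint where an interval of $F$ meets an interval outside $F$. Enlarging $F$ does not help, because chains of isolated points can be infinite.

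Now suppose $]b_k,b_k+\eta[\,\subset\Omega^*$ for some $\eta>0$, i.e.\ $b_k$ is a common endpoint of two components of $\Omega^*$. Then $]b_k,b_k+\eta[\setminus\Omega\subset\mathcal N(\Omega)$ is countable. A continuous function on $[b_k,b_k+\eta]$ that is constant on each interval of $\Omega$ is therefore constant, since its range is a countable interval. So no profile exists.

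This is not a weakness of your method: the statement itself fails in that configuration. Let $\Omega=\,]-1,0[\,\cup\bigcup_{n\ge1}]\frac1{n+1},\frac1n[$, with $u=-1$ on $]-1,0[$ and $u=1$ on $\Omega\cap\,]0,1[$. Since $0\in\partial_1\Omega\setminus\partial_{-1}\Omega$, we have $\mu_{-1}(\{0\})=0$. Hence $g=-1$ on $\{-1,0\}$ and $g=1$ on $\{\frac1n : n\ge1\}$ is an omnidirectional trace of $u$, and $u\in H^1_{\rm tr}(\Omega)$. This is consistent with Lemma~\ref{lem:H1tr=H1}, since $\Omega^*=\,]-1,0[\,\cup\,]0,1[$.

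On the other hand, take any $w\in H^1(\Omega)\cap C^0([-1,1])$. It is absolutely continuous on each $[\frac1{n+1},\frac1n]$, these intervals abut, and $w$ is continuous at $0$. Hence $w\in H^1(]0,1[)$ and $w\in H^1(]-1,0[)$ with the common endpoint value $w(0)$. Applying \eqref{eq:majfhun} on both intervals yields
\[ \Vert u-w\Vert_{H^1(\Omega)}^2 \ \ge\ \tfrac12\big((w(0)+1)^2+(w(0)-1)^2\big)\ \ge\ 1, \]
so $u\notin\widetilde H^1(\Omega)$.

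The paper's proof breaks at the same point. It asserts that the closures $[a_i,b_i]$ of the components of $\Omega^*$ are pairwise disjoint, which is false here, since both closures contain $0$. Under that additional hypothesis, Lemma~\ref{lem:escalierdiable} applies near every endpoint, and both the paper's argument and your repaired one are correct.
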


\begin{proof}
    We write $\Omega^*$ given by Lemma \ref{lem:H1tr=H1} as the countable union of open intervals $]a_i, b_i[$, $i \in I\subset\mathbb{N}$, such that all the closed intervals $[a_i, b_i]$ are disjoint.   We define $\alpha = \inf\{ a_i,i\in I\} - 1$ and $\beta = \sup\{ b_i,i\in I\} + 1$. Let $u \in H^1_{\rm tr}(\Omega)$. Using Lemma \ref{lem:H1tr=H1}, we may also denote by $u$ its continuous representative on all $[a_i, b_i]$. For all $M > 0$, let $T_M:\mathbb{R}\to \mathbb{R}$ be the truncation function $s\mapsto \min( \max(s,-M),M)$. Define $u_M:[\alpha,\beta]\to\mathbb{R}$ by
    \[ u_M(x) = \begin{cases}
        T_M(u(x)) & \text{ if } \hbox{there exists }i\in I\hbox{ s.t. } x \in [a_i, b_i],\\
        0 & \text{ otherwise}.
    \end{cases} \]
    Recall that truncation preserves $H^1$-membership with $\nabla u_M = \nabla u\cdot \mathbf{1}_{\{|u|<M\}}$ a.e. in $\Omega^*$ and that  
    \[
    ||u - u_M||_{H^1(\Omega)}^2
        \xrightarrow[M \to \infty]{} 0.\] 
    Let $\varepsilon > 0$ be given and let $M > 0$ such that $||u - u_M||_{H^1(\Omega)} \le \varepsilon$. For all $n \in \mathbb N$, let $\Omega^{(n)}$ be the union of a finite number of open intervals $]a_i, b_i[$ such that $\lambda(\Omega^* \setminus \Omega^{(n)}) \le 2^{-n}$. Then $[\alpha,\beta] \setminus \Omega^{(n)}$ is the union of a finite number of closed intervals $[c_j^n, d_j^n]$ for $j=1,\ldots,J$. Consider
    \[ v_n(x) = \begin{cases}
        u_M(x) & \text{ if } x \in \Omega^{(n)}, \\
        f_i^n(x) & \text { if } x \in [c_i^n, d_i^n],
    \end{cases} \]
    where $f_i^n : [c_i^n, d_i^n] \to \mathbb{R}$ is defined by
    $f_i^n(s) = u_M(c_i^n) (1-f(s)) + u_M(d_i^n) f(s)$ and $f$ is the generalised staircase function defined in  Lemma \ref{lem:escalierdiable} in Appendix \ref{sec:escalierdiable} with
    $f(c_i^n) = 0$ and $f(d_i^n)=1$ which is constant on all the open intervals of $\Omega^*$ included in $[c_i^n, d_i^n]$. Then $|v_n(x) - u_M(x)| \le 2M$ for $x \in [c_i^n, d_i^n]$ and
    \begin{align*}
        ||v_n - u_M||_{H^1(\Omega)}^2 & = ||v_n - u_M||_{L^2(\Omega)}^2+||\nabla v_n -\nabla u_M||_{L^2(\Omega)}^2\\
        & \hspace{1cm} \le 4 M^2 2^{-n} + ||\nabla u_M||_{L^2(\Omega^* \setminus \Omega^{(n)})}^2 \xrightarrow[n \to \infty]{} 0.
    \end{align*}
    Moreover, $v_n$ is continuous on $[\alpha,\beta]$ and therefore on $\overline \Omega$. Finally, there exists $N \in \mathbb N$ such that for all $n \ge N$,
    \[ ||v_n - u||_{H^1(\Omega)} \le ||v_n - u_M||_{H^1(\Omega)} + ||u_M - u||_{H^1(\Omega)} \le 2 \varepsilon. \]
    We conclude that $u \in \overline{H^1(\Omega) \cap \mathcal C(\overline \Omega)}$, so that the claimed equality holds using Corollary \ref{cor:inclusionshuntr}.
\end{proof}

\begin{proposition}\label{Prophunzeqhunztr1d}
    Let $\Omega$ be an open bounded subset of $\mathbb R$. Then $H^1_0(\Omega) = H^1_{\rm tr, 0}(\Omega)$.
\end{proposition}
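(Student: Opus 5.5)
The inclusion $H^1_0(\Omega)\subset H^1_{\rm tr,0}(\Omega)$ is already given by Corollary \ref{cor:inclusionshuntr}, so only the converse needs proof. Let $u\in H^1_{\rm tr,0}(\Omega)$. We use the structure from the proof of Theorem \ref{thm:H1tilde=H1}. Write $\Omega^*=\Omega\cup\mathcal N(\Omega)$ as a countable union of disjoint open intervals $]a_i,b_i[$, $i\in I$, whose closures are pairwise disjoint. By Lemma \ref{lem:H1tr=H1}, $u$ has a representative that is continuous on each $[a_i,b_i]$. The condition ${\rm tr}(u)=0$ means $\gamma_1 u(b_i)=0$ and $\gamma_{\minus 1}u(a_i)=0$ for every $i$. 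These endpoints carry positive $\mu_{\pm1}$-mass (namely $b_i-a_i$ for each of the subintervals of $\Omega$ they bound), so the equalities hold pointwise at the endpoints, not just almost everywhere. Hence $u(a_i)=u(b_i)=0$ for all $i\in I$.

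Each restriction $u_{|]a_i,b_i[}$ therefore lies in $H^1(]a_i,b_i[)$ and vanishes at both ends. We can then approximate $u$ in $H^1(\Omega)$ by finite sums of compactly supported smooth functions.

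\begin{itemize}
\item[(i)] Given $\varepsilon>0$, choose a finite set $F\subset I$ such that $\sum_{i\notin F}\Vert u\Vert^2_{H^1(]a_i,b_i[)}\le\varepsilon^2$. This tail is small because $\Vert u\Vert^2_{H^1(\Omega)}=\sum_i\Vert u\Vert^2_{H^1(]a_i,b_i[)}$; the points of $\mathcal N(\Omega)$ are countable and hence negligible.
\item[(ii)] For each $i\in F$, we have $u_{|]a_i,b_i[}\in H^1_0(]a_i,b_i[)$, by the standard 1D characterisation of $H^1_0$ of an interval via vanishing boundary values. So there is $\varphi_i\in C^\infty_c(]a_i,b_i[)$ with $\Vert u-\varphi_i\Vert_{H^1(]a_i,b_i[)}\le\varepsilon/\sqrt{|F|}$.
\item[(iii)] Each $\varphi_i$ is supported in a compact subset of $]a_i,b_i[$, but that interval may contain points of $\mathcal N(\Omega)$, which lie outside $\Omega$. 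To fix this, multiply by cutoffs $1-\chi_k$. The points of $\mathcal N(\Omega)\cap{\rm supp}\,\varphi_i$ are finitely many, since they are isolated boundary points in a compact set that are also interior points of $\Omega^*$. Around each such point, $\chi_k$ equals $1$ on $]z-1/k^2,z+1/k^2[$, equals $0$ outside $]z-1/k,z+1/k[$, and has $\Vert\chi_k'\Vert_{L^2}\to0$, for instance a logarithmic-type cutoff. More simply, since $H^1$ functions are continuous in 1D, we can first modify $\varphi_i$ to vanish near each such $z$ by subtracting $\varphi_i(z)$ times a hat function of small support. This works only if $\varphi_i(z)=0$, and that is not given, so this is the key point. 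We need to check that $u(z)$ can be arbitrary at $z\in\mathcal N(\Omega)$ while still being approximable in $H^1(\Omega)$ by functions of $C^\infty_c(\Omega)$.
\end{itemize}

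Step (iii) is where I expect the main obstacle. In 1D, a point has positive capacity, so functions in $H^1_0(]z-r,z[\cup]z,z+r[)$ must vanish at $z$. Thus an approximation from $C^\infty_c(\Omega)$ cannot converge to a $u$ with $u(z)\neq0$. This suggests the argument should instead use the trace condition at $z$ itself. Since $z\in\partial_1\Omega\cap\partial_{\minus1}\Omega$ carries positive $\mu_{\pm1}$ mass, ${\rm tr}(u)(z)=0$ forces $u(z)=0$ for the continuous representative on $\Omega^*$. So every $z\in\mathcal N(\Omega)$ also behaves like an endpoint. I would therefore redo the decomposition using the connected components of $\Omega$ itself, rather than of $\Omega^*$. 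Let $\Omega=\bigcup_j\,]c_j,d_j[$. Every endpoint $c_j,d_j$ has $\mu_{\minus1}$ (resp. $\mu_1$) mass $d_j-c_j>0$, so $u(c_j^+)=u(d_j^-)=0$ for all $j$. Then steps (i)--(ii) apply directly with components of $\Omega$, no cutoff is needed, and $\sum_{j\in F}\varphi_j\in C^\infty_c(\Omega)$ approximates $u$ within $2\varepsilon$. The only remaining care is to justify pointwise evaluation via atoms of $\mu_{\pm1}$, and to cite the 1D fact that $H^1(]c,d[)$ functions with zero endpoint values belong to $H^1_0(]c,d[)$.
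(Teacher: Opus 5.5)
Your final argument is correct and is essentially the paper's proof: decompose $\Omega$ itself (not $\Omega^*$) into its components $]c_j,d_j[$, note that each endpoint is an atom of $\mu_{1}$ or $\mu_{-1}$ so that ${\rm tr}(u)=0$ forces $u$ to vanish there, then truncate to finitely many components and approximate in each $H^1_0(]c_j,d_j[)$. Drop the $\Omega^*$ detour: its claim that endpoints of components of $\Omega^*$ carry positive $\mu_{\pm1}$-mass fails in general (e.g.\ for $\Omega=\,]0,1[\,\setminus\{1-1/n : n\ge 2\}$, the point $1$ is not an atom of $\mu_1$), though your final argument does not depend on it.
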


\begin{proof}
Note that, in this proof, we do not suppress the isolated points of the boundary of $\Omega$, in contrast to the proof of Theorem \ref{thm:H1tilde=H1}.
    Let $u \in H^1_{\rm tr, 0}(\Omega)$. We again write $\Omega$ as the countable union of disjoint open intervals $]a_i, b_i[$, $i \in I\subset\mathbb{N}$.
    Let $\varepsilon > 0$  be given. As in Theorem \ref{thm:H1tilde=H1}, for all $n \in \mathbb N$, set $\Omega^{(n)}$ be the finite union of the open intervals $]a_i, b_i[$ for $i \in I_n\subset I$, such that $\lambda(\Omega \setminus \Omega^{(n)}) \le 2^{-n}$. Consider $v_n:\mathbb{R}\to \mathbb{R}$ defined by
    \[ v_n(x) = \begin{cases}
        u(x) & \text{ if } x \in \Omega^{(n)}, \\
        0 & \text { otherwise}.
    \end{cases} \]
    Since we have
    \[ ||v_n - u||_{H^1(\Omega)}^2 = ||u||_{H^1(\Omega\setminus \Omega^{(n)})}^2 \xrightarrow[n \to \infty]{} 0, \]
    there exists $N \in \mathbb N$ such that for all $n \ge N$, $||v_n - u||_{H^1(\Omega)} \le \varepsilon$. For such $n$ and for all $i \in I_n$, we have $v_n \in H^1_{0}(]a_i, b_i[)$, so there exists $w_{n,i} \in C^\infty_c(]a_i, b_i[)$ such that
    \[ \Vert v_n - w_{n,i} \Vert_{H^1(]a_i, b_i[)}^2 \le \frac{\varepsilon^2}{\#I_n}. \]
    Hence, if we set $w_n = w_{n,i}$ on $]a_i, b_i[$ for all $i\in I_n$ and $0$ elsewhere, we obtain $\Vert v_n - w_n \Vert_{H^1(\Omega^{(n)})}^2 \le \varepsilon^2$ with $w_n\in C^\infty_c(\Omega)$.
     Therefore,
    \[ \Vert w_n - u \Vert_{H^1(\Omega)} \le \Vert w_n - v_n \Vert_{H^1(\Omega)} + \Vert v_n - u \Vert_{H^1(\Omega)} \le 2 \varepsilon, \]
    so $u \in H^1_0(\Omega)$, and the claimed equality holds using Corollary \ref{cor:inclusionshuntr}.
\end{proof}

\subsection{Definition of the spaces  $H^{1/2}(\partial\Omega)$ and $H^{-1/2}(\partial\Omega)$ for general domains}\label{sec:proptrace}

Owing to the Hilbert structure of $H^1_{\rm tr}(\Omega)$, we can define the Hilbert space which contains the traces of elements of $H^1_{\rm tr}(\Omega)$. We thus define
\[ H^{1/2}(\partial\Omega) = \{ {\rm tr}(u) : u\in H^1_{\rm tr}(\Omega)\}. \]

\begin{proposition}\label{prop:defpsi}
    Let $g \in H^{1/2}(\partial\Omega)$. There exists a unique $u \in H^1_{\rm tr}(\Omega)$ such that ${\rm tr}(u) = g$ and for all $w \in H^1_{\rm tr, 0}(\Omega)$,
    \[ \int_\Omega \nabla u \cdot \nabla w {\ \rm d} x = 0. \]
    We denote $u = \Psi(g)$, and the map $\Psi : H^{1/2}(\partial\Omega) \to H^1_{\rm tr}(\Omega)$ is injective.
\end{proposition}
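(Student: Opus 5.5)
Since $g \in H^{1/2}(\partial\Omega)$, there is some $u_0 \in H^1_{\rm tr}(\Omega)$ with ${\rm tr}(u_0) = g$. The plan is to look for $u = u_0 - w_0$ with $w_0 \in H^1_{\rm tr,0}(\Omega)$, which turns the statement into a variational problem on the Hilbert space $H^1_{\rm tr,0}(\Omega)$. This space carries the scalar product $\langle w, \varphi\rangle = \int_\Omega \nabla w\cdot\nabla \varphi \,{\rm d}x$. By the preceding lemma (Poincar\'e inequality \eqref{eq:poinhun}), this product defines a norm equivalent to the $H^1$ norm, and the space is complete.

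\textbf{Existence.} Consider the linear form $L(\varphi) = \int_\Omega \nabla u_0\cdot\nabla\varphi\,{\rm d}x$ on $H^1_{\rm tr,0}(\Omega)$. It is continuous, since $|L(\varphi)| \le \Vert\nabla u_0\Vert_{L^2}\,\langle\varphi,\varphi\rangle^{1/2}$. By the Riesz representation theorem there is a unique $w_0 \in H^1_{\rm tr,0}(\Omega)$ with $\langle w_0,\varphi\rangle = L(\varphi)$ for all $\varphi$. Set $u = u_0 - w_0$.
\begin{itemize}
\item $u$ lies in $H^1_{\rm tr}(\Omega)$, because $H^1_{\rm tr}(\Omega)$ is a vector space and $H^1_{\rm tr,0}(\Omega) \subset H^1_{\rm tr}(\Omega)$.
\item By linearity of ${\rm tr}$, ${\rm tr}(u) = {\rm tr}(u_0) - 0 = g$. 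Linearity holds because the directional traces $\gamma_\theta$ are linear $\mu_\theta$-a.e., as is clear from Definition \ref{def:dirtrace} applied to a linear combination of representatives.
\item $\int_\Omega \nabla u\cdot\nabla w\,{\rm d}x = 0$ for all $w \in H^1_{\rm tr,0}(\Omega)$, by construction of $w_0$.
\end{itemize}
Equivalently, $u$ is the $H^1_{\rm tr,0}$-orthogonal correction of $u_0$, i.e.\ the minimiser of the Dirichlet energy over the affine set $u_0 + H^1_{\rm tr,0}(\Omega)$. That affine set is closed because ${\rm tr}$ is continuous (Theorem \ref{thm:huntclosed}).

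\textbf{Uniqueness.} Let $u_1, u_2$ both satisfy the conditions. Then $w = u_1 - u_2 \in H^1_{\rm tr}(\Omega)$ and ${\rm tr}(w) = 0$, so $w \in H^1_{\rm tr,0}(\Omega)$. Testing the orthogonality relation with $w$ itself gives $\int_\Omega |\nabla w|^2 = 0$. The Poincar\'e inequality \eqref{eq:poinhun} then forces $w = 0$ in $L^2(\Omega)$.

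\textbf{Injectivity of $\Psi$.} Since ${\rm tr}(\Psi(g)) = g$, $\Psi$ has a left inverse and is therefore injective. Moreover, by the uniqueness step, $\Psi(g)$ does not depend on the choice of $u_0$.

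The only delicate point is making sure ${\rm tr}$ is well defined and linear on equivalence classes in $L^2(\partial\Omega,(\mu_\theta)_{\theta\in\mathbb{S}^{d-1}})$, and that ${\rm tr}(u) = 0$ is understood $\mu_\theta$-a.e. for every $\theta$. Both follow directly from the definitions and Theorem \ref{thm:deftraceoned}, so I expect no real obstacle.
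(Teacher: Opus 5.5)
Your proof is correct and follows the paper's argument. You lift $g$ to some element of $H^1_{\rm tr}(\Omega)$, subtract its projection onto $H^1_{\rm tr,0}(\Omega)$ for the gradient scalar product (you invoke Riesz where the paper invokes Lax--Milgram, which amounts to the same thing here), and obtain uniqueness by testing with the difference and applying the Poincar\'e inequality \eqref{eq:poinhun}. Your explicit check that ${\rm tr}$ is linear and your left-inverse argument for injectivity fill in points the paper leaves implicit.
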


\begin{proof}
    Let $g \in H^{1/2}(\partial\Omega)$. There exists $v \in H^1_{\rm tr}(\Omega)$ such that ${\rm tr}(v) = g$. By the Lax-Milgram theorem, there exists a unique $\widetilde v \in H^1_{\rm tr, 0}(\Omega)$ such that for all $w \in H^1_{\rm tr, 0}(\Omega)$,
    \[ \int_\Omega \nabla \widetilde v \cdot \nabla w {\ \rm d}x
        = \int_\Omega \nabla v \cdot \nabla w {\ \rm d}x. \]
    Considering $u = v - \widetilde v$, we obtain ${\rm tr}(u) = {\rm tr}(v) = g$, and for all $w \in H^1_{\rm tr, 0}(\Omega)$,
    \[ \int_\Omega \nabla u \cdot \nabla w {\ \rm d}x
        = \int_\Omega \nabla (v-\widetilde v) \cdot \nabla w {\ \rm d}x
        = 0. \]
    This proves the existence. To show the uniqueness, let $\widetilde u \in H^1_{\rm tr}(\Omega)$ satisfying the two properties. Then $u - \widetilde u \in H^1_{\rm tr, 0}(\Omega)$, so that $\Vert \nabla(u-\tilde u) \Vert_{L^2(\Omega)} = 0$. Hence, $u = \widetilde u$.
\end{proof}

\begin{corollary}\label{cor:hhalfhilbert}
    The space $H^{1/2}(\partial\Omega)$ is a Hilbert space with the scalar product
    \[ \langle g, \widetilde{g}\rangle_{H^{1/2}}
        = \langle \Psi(g), \Psi(\widetilde{g})\rangle_{H^{1}}. \]
\end{corollary}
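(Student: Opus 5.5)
The plan is to transport the Hilbert structure of $H^1_{\rm tr}(\Omega)$ to $H^{1/2}(\partial\Omega)$ through the map $\Psi$ of Proposition \ref{prop:defpsi}. Concretely, I will show that $\Psi$ is a linear bijection from $H^{1/2}(\partial\Omega)$ onto a closed subspace of $H^1_{\rm tr}(\Omega)$. Once that is done, the pulled-back inner product is automatically a scalar product, and completeness is inherited from the image.

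\textbf{Linearity of $\Psi$.} Let $g,\widetilde g\in H^{1/2}(\partial\Omega)$ and $a,b\in\mathbb{R}$, and set $u=a\Psi(g)+b\Psi(\widetilde g)$.
\begin{itemize}
\item Since ${\rm tr}$ is linear on $H^1_{\rm tr}(\Omega)$, and the directional traces $\gamma_\theta$ are linear because they are defined line by line, we have $u\in H^1_{\rm tr}(\Omega)$ and ${\rm tr}(u)=ag+b\widetilde g$.
\item For every $w\in H^1_{\rm tr,0}(\Omega)$, we have $\int_\Omega\nabla u\cdot\nabla w=0$.
\end{itemize}
By the uniqueness part of Proposition \ref{prop:defpsi}, $u=\Psi(ag+b\widetilde g)$. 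Hence $\Psi$ is linear, and it is injective because ${\rm tr}\circ\Psi={\rm Id}$. The bilinear form $\langle g,\widetilde g\rangle_{H^{1/2}}$ is then symmetric and positive. It is definite because $\langle g,g\rangle_{H^{1/2}}=0$ forces $\Psi(g)=0$, hence $g={\rm tr}(\Psi(g))=0$.

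\textbf{Completeness.} I will identify the image $V=\Psi(H^{1/2}(\partial\Omega))$ as
\[ V=\Big\{u\in H^1_{\rm tr}(\Omega):\ \int_\Omega\nabla u\cdot\nabla w{\ \rm d}x=0\ \ \forall w\in H^1_{\rm tr,0}(\Omega)\Big\}. \]
The inclusion of the image in this set holds by construction. Conversely, any $u$ in this set equals $\Psi({\rm tr}(u))$ by the uniqueness in Proposition \ref{prop:defpsi}.

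This set is an intersection of kernels of the linear forms $u\mapsto\int_\Omega\nabla u\cdot\nabla w$. Each of these forms is continuous on $H^1(\Omega)$, so $V$ is closed in $H^1_{\rm tr}(\Omega)$. Since $H^1_{\rm tr}(\Omega)$ is a Hilbert space by Theorem \ref{thm:huntclosed}, $V$ is a Hilbert space.

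Now take a Cauchy sequence $(g_n)$ in $H^{1/2}(\partial\Omega)$. Then $(\Psi(g_n))$ is Cauchy in $H^1(\Omega)$ and converges to some $u\in V$. Put $g={\rm tr}(u)\in H^{1/2}(\partial\Omega)$. Then $\Psi(g)=u$, so $\Vert g_n-g\Vert_{H^{1/2}}=\Vert\Psi(g_n)-u\Vert_{H^1}\to0$.

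\textbf{Main obstacle.} The only delicate point is the characterisation of the image of $\Psi$, which is what gives closedness. It relies on uniqueness in Proposition \ref{prop:defpsi}, and that uniqueness in turn rests on the Poincaré inequality \eqref{eq:poinhun} on $H^1_{\rm tr,0}(\Omega)$. I would also note that the continuity of ${\rm tr}$ from Theorem \ref{thm:huntclosed} makes the embedding $H^{1/2}(\partial\Omega)\hookrightarrow L^2(\partial\Omega,(\mu_\theta)_{\theta\in\mathbb{S}^{d-1}})$ continuous, although completeness does not need this fact.
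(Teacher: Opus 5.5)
Your proposal is correct and follows the paper's argument: transport the Hilbert structure through the bijection $\Psi$ onto its closed image in $H^1_{\rm tr}(\Omega)$. You also supply details the paper leaves implicit, namely the linearity of $\Psi$ via the uniqueness in Proposition \ref{prop:defpsi}, and the closedness of the image via its identification with $H^1_{\rm tr}(\Omega)\cap D_\nu(\Omega)$, an intersection of kernels of continuous linear forms.
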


\begin{proof}
    This is a consequence of the fact that $\Psi(H^{1/2}(\partial\Omega))$ is a closed subset of $H^1_{\rm tr}(\Omega)$, and that the map $\Psi : H^{1/2}(\partial\Omega) \to \Psi(H^{1/2}(\partial\Omega))$ is bijective.
\end{proof}

We can then define the following space of harmonic functions:
\[ D_\nu(\Omega) = \left\{u\in H^1(\Omega) : \forall w \in H^1_{\rm tr,0}(\Omega), \ \int_\Omega \nabla u \cdot \nabla w {\ \rm d} x = 0\right\}. \]
By definition, for all $g \in H^{1/2}(\partial\Omega)$, $\Psi(g)$ is harmonic.

\begin{definition}[Generalized normal derivative and Dirichlet-to-Neumann operator]\label{def:gennormalder}
    For all $u \in D_\nu(\Omega)$, let $\partial_\nu u$ the continuous linear form defined on $H^{1/2}(\partial\Omega)$ by
    \[ \langle \partial_\nu u,g\rangle_{H^{-1/2},H^{1/2}} 
        = \int_\Omega \nabla u\cdot\nabla \Psi(g) {\ \rm d} x. \]
    We call $\partial_\nu u$ the generalized normal derivative of $u$. It is an element of the space $H^{-1/2}(\partial\Omega)$ of all continuous linear forms on $H^{1/2}(\partial\Omega)$. The Dirichlet-to-Neumann operator is then defined by
    \[ D : \begin{cases}
        H^{1/2}(\partial\Omega) & \to \qquad H^{-1/2}(\partial\Omega) \\
        \qquad g & \mapsto \quad D(g) = \partial_\nu \Psi(g).
    \end{cases} \]
\end{definition}

We notice that $D$ is the standard isomorphism between $H^{1/2}(\partial\Omega)$ and its topological dual space.

\section{Examples and applications}

 \subsection{Preliminary: a linear form supported by a fractal domain}\label{sec:fractal}

  The construction of the domain $\Omega_{\mathcal{C}}$ (see Figure \ref{fig:monocone}) is inspired by results of \cite{mazya} on sets with positive capacity. Following \cite{falconer1990fractal}, we consider a domain whose boundary includes a fractal subset with positive capacity, but zero 1D-Lebesgue measure. Then, accounting for some results from \cite{ziemer1989weakly} concerning Poincar\'e inequalities linked with the notion of capacity, we construct a continuous linear functional on $H^1(\Omega_{\mathcal{C}})$ that equals the integral of the Sobolev trace of the quasi-continuous representative of $u$ on $C_{1/3} \times \{0\}$ with respect to the Hausdorff measure of dimension $\log 2/\log 3$. Nevertheless, for the sake of completeness, we provide a short, self-contained proof that does not rely on results from potential theory or the theory of sets of positive capacity.

     Let us first define the middle third Cantor set, denoted by $\mathcal{C}_{1/3}\subset [0,1]$. We define the values $a_m,b_m,c_m,d_m$ for all positive integer $m$ by
     \begin{itemize}
         \item $a_1=0$, $b_1 = 1$;
         \item For all integer $n\ge 0$, and any $m=2^n,\ldots,2^{n+1}-1$, we define $c_m = \frac 2 3 a_m+\frac 1 3 b_m$ and $d_m = \frac 1 3 a_m+\frac 2 3 b_m$; then we define $a_{2m} = a_m$, $b_{2m} = c_m$, $a_{2m+1} = d_m$, $b_{2m+1} = b_m$.
     \end{itemize}
     Hence we observe that $n=0$ provides  $c_1 = \frac 1 3$, $d_1 = \frac 2 3$,  $a_2 = a_1$, $b_2 = c_1$, $a_3 = d_1$ and $b_3 = b_1$.
     We define the fractal Cantor set $\mathcal{C}_{1/3}$ by
     \[ \mathcal{C}_{1/3} = [0,1]\setminus \bigcup_{m\ge 1} ]c_m,d_m[. \]
     The 1D-Lebesgue measure of $\mathcal{C}_{1/3}$ is then equal to 0.
     
    For all $a\in [0,1]$, we define the cone with vertex $(a,0)$, angle $\pi/2$ and vertical height equal to $1$:
    \[ K_a = \{(x_1,x_2)\in \mathbb{R}^2 : \ 0 < x_2 < 1\hbox{ and } |x_1-a|<x_2\}. \]
    We define the open set $\Omega_{\mathcal{C}}$ (see Figure \ref{fig:monocone}) by
    \begin{equation}\label{eq:defomegac}
     \Omega_{\mathcal{C}} = \bigcup_{a\in \mathcal{C}_{1/3}} K_a.   
    \end{equation} 
\begin{lemma}\label{lem:monocone}
 Let $\Omega$ be an open bounded domain of $\mathbb{R}^2$ such that $\Omega_{\mathcal{C}}\subset \Omega$ where $\Omega_{\mathcal{C}}$ is defined by \eqref{eq:defomegac}.
For all $n\in\mathbb{N}$, we define $y_n = 3^{-n} /2$ and $\mathcal{D}_n = \{x\in\mathbb{R} : (x,y_n)\in\Omega_{\mathcal{C}}\}$.
    
    We define the linear form $\nu_n\in H^1(\Omega)'$ by
    \begin{equation}
\forall u\in  H^1(\Omega),\ \nu_n(u) = \frac 1 {|\mathcal{D}_n|}\int_{\mathcal{D}_n} u(x,y_n) {\ \rm d}x,
\label{def-nun}
\end{equation}
    where, by abuse of notation, we also denote by $u$ the standard trace of $u$ on any line $\mathcal{D}_n$.
    Then the sequence $(\nu_n)_{n\in\mathbb{N}}$ converges to a linear form denoted by $\overline{\nu}\in H^1(\Omega)'$. This linear form is such that, if $u\in H^1(\Omega)$ is equal a.e. in $\Omega_{\mathcal{C}}$ to a constant $A$, then $\overline{\nu}(u) = A$, and for all $u\in H^1(\Omega)$ such that there exists $V$, open set of $\mathbb{R}^2$, with $\mathcal{C}_{1/3}\times \{0\}\subset V$ and $u(x) = 0$ for a.e. $x\in V\cap \Omega_{\mathcal{C}}$, then $\overline{\nu}(u) =0$.
\end{lemma}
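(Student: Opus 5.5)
The plan is to show that $(\nu_n)_{n\in\mathbb{N}}$ is a Cauchy sequence in the dual space $H^1(\Omega)'$ by proving an estimate of the form
\[
|\nu_{n+1}(u)-\nu_n(u)| \le C\,2^{-n/2}\,\Vert \nabla u\Vert_{L^2(\Omega)}, \qquad \forall u\in H^1(\Omega),
\]
with $C$ independent of $n$ and $u$. Since the bound is summable, $\nu_n$ then converges in $H^1(\Omega)'$ to some $\overline{\nu}$. I would first check that $\nu_0$ is continuous, using the standard trace inequality on a horizontal line crossing a strip contained in $\Omega_{\mathcal{C}}$. This also shows that each $\nu_n$ is continuous, although this follows from the telescoping estimate as well.

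\textbf{Step 1 (geometry of $\mathcal{D}_n$).} At height $y$, $\Omega_{\mathcal{C}}$ is the union of the intervals $]a-y,a+y[$ with $a\in\mathcal{C}_{1/3}$. Consider the $2^n$ level-$n$ intervals $[a_m,b_m]$, each of length $3^{-n}$. Inside one of them, the gaps of the Cantor set have length at most $3^{-n-1}<2y_n=3^{-n}$, so they are filled. Two distinct level-$n$ intervals are separated by at least $3^{-n}=2y_n$. Hence $\mathcal{D}_n$ is the union of $2^n$ pairwise disjoint open intervals $J_m=\,]a_m-y_n,b_m+y_n[$, each of length $2\cdot 3^{-n}$, and $|\mathcal{D}_n|=2^{n+1}3^{-n}$.

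\textbf{Step 2 (comparison of averages).} Each $J_m$ at height $y_n$ lies above the two children intervals $J_{2m}$ and $J_{2m+1}$ at height $y_{n+1}$. I split $J_m$ into its two halves, each of length $3^{-n}$, and map each child, of length $2\cdot 3^{-n-1}$, affinely onto the corresponding half. This gives a family of straight segments from $(x,y_{n+1})$ to $(\varphi(x),y_n)$ whose slopes are uniformly bounded. These segments sweep a trapezoid $T$. I would check that $T\subset\Omega_{\mathcal{C}}$: at each intermediate height the endpoints interpolate linearly, and they stay inside the interval $]a-y,\,\cdot+y[$ generated by the cones. The trapezoids for different $m$ are disjoint, and all of them lie in the strip $y_{n+1}<x_2<y_n$.

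For a.e.\ such segment $u$ is $H^1$ along it, by the argument in the proof of the existence theorem of directional traces applied in the slanted direction (a shear change of variables reduces to that case). The fundamental theorem of calculus along the segments, followed by integration in $x$, gives that $\frac{1}{|J_m|}\int_{J_m}u$ minus the weighted average of $u$ over the two children is bounded by $C\,3^{n}\int_T|\nabla u|$. The constant absorbs the bounded Jacobian of the parametrisation. Because the affine maps scale lengths by the ratio of $|J_m|$ to the children's total length, the weights match the normalisation by $|\mathcal{D}_n|$ and $|\mathcal{D}_{n+1}|$ exactly.

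Summing over $m$ and dividing by $|\mathcal{D}_n|$, I get $|\nu_{n+1}(u)-\nu_n(u)|\le C|\mathcal{D}_n|^{-1}\int_{R_n}|\nabla u|$, where $R_n$ is the union of the trapezoids and $|R_n|\le C\,2^n3^{-2n}$. Cauchy--Schwarz then yields the bound $C(3/2)^n\,2^{n/2}3^{-n}\Vert\nabla u\Vert_{L^2}=C\,2^{-n/2}\Vert\nabla u\Vert_{L^2}$. This step is the main obstacle: one must verify carefully that $T\subset\Omega_{\mathcal{C}}$, and handle the traces on horizontal lines consistently with the a.e.\ slanted-line representatives. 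For the latter I would use Fubini on the shear-transformed region together with density of smooth functions on the open set $R_n$.

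\textbf{Step 3 (properties).} If $u=A$ a.e.\ on $\Omega_{\mathcal{C}}$, then $\nu_n(u)=A$ for every $n$, so $\overline{\nu}(u)=A$. If $u=0$ a.e.\ on $V\cap\Omega_{\mathcal{C}}$ with $V\supset\mathcal{C}_{1/3}\times\{0\}$ open, then compactness gives $\delta>0$ such that the $\delta$-neighbourhood of $\mathcal{C}_{1/3}\times\{0\}$ lies in $V$. Every point of $\mathcal{D}_n\times\{y_n\}$ is within distance $\le 2y_n$ of $\mathcal{C}_{1/3}\times\{0\}$, so for $n$ large $\mathcal{D}_n\times\{y_n\}$ lies in the open set $V\cap\Omega_{\mathcal{C}}$, where the trace of $u$ vanishes. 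Hence $\nu_n(u)=0$ for all large $n$, and $\overline{\nu}(u)=0$.
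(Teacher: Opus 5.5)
Your proposal is correct and follows essentially the same route as the paper: the decomposition of $\mathcal{D}_n$ into $2^n$ intervals of length $2\cdot 3^{-n}$, the splitting of each into two halves matched affinely with the children at height $y_{n+1}$, the fundamental theorem of calculus along slanted segments with a bounded Jacobian, Cauchy--Schwarz with an area bound of order $2^n3^{-2n}$ giving a summable $2^{-n/2}$ rate, and the same compactness argument for the vanishing property. The differences are cosmetic: you use trapezoids instead of $\Omega_{\mathcal{C}}\cap(J\times\,]y_{n+1},y_n[)$, and you justify the traces and segment representatives explicitly, which the paper leaves implicit.
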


    \begin{figure}[!ht]
    \begin{center}
    \begin{tikzpicture}[scale=6] 
    
        \def\h{0.5}      
        \def\ylow{0.03} 
        \def\yhigh{0.084} 
    
        \def\cantorL{0.5} 
    
        \tikzset{
            upperCone/.style={fill=red!10, draw=blue!40, opacity=0.4, ultra thin},
            midZone/.style={fill=magenta!40, opacity=0.7}, 
            cantorPoint/.style={black, thick}
        }
    
        \draw[dotted, gray!50] (0,0) -- (\cantorL, 0);
    
        \foreach \i in {0,...,15} {
            \pgfmathsetmacro{\p}{
                ((mod(floor(\i/8),2)*18 + mod(floor(\i/4),2)*6 + mod(floor(\i/2),2)*2 + mod(\i,2)*0.666)/27) * \cantorL
            }
    
            \coordinate (S) at (\p, 0);
    
            \fill[upperCone] (S) -- (\p - \h, \h) -- (\p + \h, \h) -- cycle;
            \begin{scope}
                \clip (-0.25, \ylow) rectangle (0.75, \yhigh);
                \fill[midZone] (S) -- (\p - \h, \h) -- (\p + \h, \h) -- cycle;
            \end{scope}
    
    
            \draw[cantorPoint] (\p-0.001, 0) -- (\p+0.001, 0);
        }
    
        \draw[<->, thick] (0, -0.04) -- node[below] { $[0,1]$} (\cantorL, -0.04);
        \draw[<->, thick] (-0.3, 0) -- node[left] {[0,1]} (-0.3, \h);
    
        \node[right] at (\cantorL+0.05, 0) {$\mathcal{C}_{1/3}\times\{0\}$};
        \node[left] at (-.05, \yhigh-.03) {$\Omega_{\mathcal{C}}^{(1)}$};
    \end{tikzpicture}
    \end{center}
    \caption{The domains $\Omega_{\mathcal{C}}$ and $\Omega_{\mathcal{C}}^{(1)}$.\label{fig:monocone}}
    \end{figure}

\begin{proof}
    Note that
    \[ \mathcal{D}_n = \bigcup_{m=2^n}^{2^{n+1}-1} I_m^{(n)}\hbox{ with }I_m^{(n)}:= \ ]a_m-y_n, b_m+y_n[, \] 
    where the length of $I_m^{(n)}$ is equal to $2\cdot 3^{-n}$, hence the measure of $\mathcal{D}_n$ is equal to $2 (2/3)^{n}$ 
     and we also have
    \[ \nu_n(u) = 2^{-n}\sum_{m=2^n}^{2^{n+1}-1} \frac 1 {|I_m^{(n)}|}\int_{I_m^{(n)}} u(x,y_n){\ \rm d}x. \] 
    
    In order to prove the convergence of the sequence $(\nu_n)_n$ in $H^1(\Omega)'$, we compute $\nu_{n+1}(u) - \nu_n(u)$. 
    We denote by $J_{2m}^{(n)}:=]a_m-y_n, \frac {a_m+b_m} 2[$ and $J_{2m+1}^{(n)}:=]\frac {a_m+b_m} 2, b_m+y_n[$.
    We observe that
    \[ \frac 2 {|I_m^{(n)}|}\int_{I_m^{(n)}} u(x,y_n){\ \rm d}x 
        = \frac 1 {|J_{2m}^{(n)}|}\int_{J_{2m}^{(n)}} u(x,y_n){\ \rm d}x 
        + \frac 1 {|J_{2m+1}^{(n)}|}\int_{J _{2m+1}^{(n)}} u(x,y_n){\ \rm d}x. \]

    We have
    \[ \nu_{n+1}(u) - \nu_n(u)
        = 2^{-(n+1)}\sum_{m=2^n}^{2^{n+1}-1} (T_{2m}^{(n)} + T_{2m+1}^{(n)}), \]
    with
    \[ T_{2m}^{(n)} = \frac 1 {|I_{2m}^{(n+1)}|}\int_{I_{2m}^{(n+1)}} u(x,y_{n+1}){\ \rm d}x
        - \frac 1 {|J_{2m}^{(n)}|}\int_{J_{2m}^{(n)}} u(x,y_n){\ \rm d}x, \]
    and
    \[ T_{2m+1}^{(n)} = \frac 1 {|I_{2m+1}^{(n+1)}|}\int_{I_{2m+1}^{(n+1)}} u(x,y_{n+1}){\ \rm d}x - \frac 1 {|J_{2m+1}^{(n)}|}\int_{J_{2m+1}^{(n)}} u(x,y_n){\ \rm d}x. \]
    Let us find a bound for $T_{2m}^{(n)}$, using the gradient of $u$ (the bound for $T_{2m+1}^{(n)}$ will then be identical).
    We have, owing the change of variable $x \to a_{2m}-y_{n+1}+ 2(x- a_m+y_n)/3$ in the first integral of $T_{2m}^{(n)}$, 
    \[ T_{2m}^{(n)} = \frac 1 {|J_{2m}^{(n)}|}\int_{J_{2m}^{(n)}} \left(u\left(a_{2m}-y_{n+1}+ \frac{2(x- a_m+y_n)}{3},y_{n+1}\right) - u(x,y_n)\right){\ \rm d}x. \]
    We then write
    \[ u(a_{2m}-y_{n+1}+ 2(x- a_m+y_n)/3,y_{n+1}) - u(x,y_n) 
        = \int_{[y_{n+1},y_n]} \nabla u( \beta(x) y + \zeta(x),y) \cdot \frac {\bm v}{|{\bm v}|}{\ \rm d}y, \]
    where ${\bm v} = (a_{2m}-y_{n+1}+ 2(x- a_m+y_n)/3 - x,y_{n+1} - y_n)$, $\beta(x) y_{n+1} + \zeta(x) = a_{2m}-y_{n+1}+ 2(x- a_m+y_n)/3$ and $\beta(x) y_{n} + \zeta(x) = x$. 
    We make the change of variable $(x,y)\to (\beta(x) y + \zeta(x),y)$, whose Jacobian is equal to $J(x,y) = \frac {\frac  1 3 y + \frac 2 3 y_n - y_{n+1}}{y_n - y_{n+1}}$. Since $\frac 2 3 \le J(x,y)\le 1$, we get that
    \[ |T_{2m}^{(n)}| \le  \frac 1 {|J_{2m}^{(n)}|}\int_{\Omega_{2m}^{(n)} } \vert \nabla u(x,y)\vert {\ \rm d}x{\ \rm d}y \hbox{ and similarly }
    |T_{2m+1}^{(n)}| \le \frac 1 {|J_{2m+1}^{(n)}|}\int_{\Omega_{2m+1}^{(n)} } \vert \nabla u(x,y)\vert {\ \rm d}x{\ \rm d}y, \]
    where
    \[ \Omega_{2m}^{(n)}= \Omega_{\mathcal{C}} \cap (J_{2m}^{(n)}\times ]y_{n+1},y_n[)\hbox{ and }\Omega_{2m+1}^{(n)}= \Omega_{\mathcal{C}} \cap (J_{2m+1}^{(n)}\times ]y_{n+1},y_n[). \]
    Therefore, denoting by 
    \[ \Omega_{\mathcal{C}}^{(n)} = \{ (x,y)\in\Omega_{\mathcal{C}}, y_{n+1} < y < y_n\} = \bigcup_{m=2^n}^{2^{n+1}-1} (\Omega_{2m}^{(n)}\cup\Omega_{2m+1}^{(n)}), \]
    we find that 
    \[ |\nu_{n+1}(u) - \nu_n(u)| \le  \frac {2^{-n}} {3^{-n}} \int_{\Omega_{\mathcal{C}}^{(n)} } \vert \nabla u(x,y)\vert {\ \rm d}x{\ \rm d}y. \]
    Owing to the Cauchy-Schwarz inequality, and to the fact that $\lambda^2(\Omega_{\mathcal{C}}^{(n)}) \le 2^{n+1} 3^{-2n}$, we get
    \[ |\nu_{n+1}(u) - \nu_n(u)|^2 \le  \frac {2^{-2n}} {3^{-2n}} 2^{n+1} 3^{-2n}\int_{\Omega_{\mathcal{C}}^{(n)} } \vert \nabla u(x,y)\vert^2 {\ \rm d}x{\ \rm d}y, \]
    which implies 
    \[ |\nu_{n+1}(u) - \nu_n(u)| \le 2^{(1-n)/2} \Vert  u\Vert_{H^1(\Omega)}. \]
    This first shows that the series $\sum_n (\nu_{n+1} - \nu_n)$ is absolutely convergent in $H^1(\Omega)'$, which, using that $\nu_0\in H^1(\Omega)'$, allows us to define $\overline{\nu}\in H^1(\Omega)'$ by
    \[ \overline{\nu} = \lim_{n\to\infty}\nu_n. \]
    We then notice that, if $u\in H^1(\Omega)$ is equal a.e. in $\Omega_{\mathcal{C}}$ to a constant $A$, then $\nu_n(u) = A$ for all $n\ge 0$, which yields $\overline{\nu}(u) = A$. 
    
    Let $u\in H^1(\Omega)$ such that there exists $V$, open set of $\mathbb{R}^2$, with $\mathcal{C}_{1/3}\times \{0\}\subset V$ and $u(x) = 0$ for a.e. $x\in V\cap \Omega_{\mathcal{C}}$. We notice that the distance $a$ between the compact set $\mathcal{C}_{1/3}\times\{0\}$ and the boundary of $V$ is positive. Hence there exists $n_0\in \mathbb{N}$ such that, for all $n\ge n_0$, $\mathcal{D}_n\times\{y_n\}\subset V$. For such $n$, we get $\nu_n(u) = 0$, which shows that $\overline{\nu}(u) = 0$.
\end{proof}

\begin{corollary}[Positive capacity] Let  $\Omega = B((\frac 1 2,0),2)\subset \mathbb{R}^2$ (this open ball satisfies $\overline{\Omega_{\mathcal{C}}}\subset \Omega$, where $\Omega_{\mathcal{C}}$ is defined by \eqref{eq:defomegac}, see Figure \ref{fig:cantcirc}). Denote $K= \mathcal{C}_{1/3}\times\{0\}$ and define the capacity of $K$ relative to $\Omega$, following for example \cite{edmundsevans1987,boc1996exist},
 \begin{equation}\label{eq:defcap}
   {\rm Cap}(K,\Omega) = \inf\{ \Vert\nabla u\Vert_{L^2(\Omega)}^2, \ u\in C^\infty_c(\Omega), u\ge \chi_K\}.   
 \end{equation}
 Then ${\rm Cap}(K,\Omega) >0$.
\end{corollary}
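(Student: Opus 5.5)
We argue by contradiction, using the linear form $\overline{\nu}\in H^1(\Omega)'$ given by Lemma \ref{lem:monocone}, which applies since $\Omega_{\mathcal{C}}\subset\Omega$. Suppose ${\rm Cap}(K,\Omega)=0$. Then there is a sequence $(u_k)_{k\in\mathbb{N}}$ in $C^\infty_c(\Omega)$ with $u_k\ge\chi_K$ and $\Vert\nabla u_k\Vert_{L^2(\Omega)}\to 0$. Since $u_k\in H^1_0(\Omega)$ and $\Omega$ is a ball, the Poincar\'e inequality gives $\Vert u_k\Vert_{H^1(\Omega)}\le C\Vert\nabla u_k\Vert_{L^2(\Omega)}\to 0$. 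By continuity of $\overline{\nu}$ on $H^1(\Omega)$, we get $\overline{\nu}(u_k)\to 0$.

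The aim is to contradict this by showing $\overline{\nu}(u_k)\ge 1$ for all $k$, or at least $\liminf_k \overline{\nu}(u_k)\ge 1$. The value $\overline{\nu}(u_k)$ is determined by $u_k$ near $K$, through the second property in Lemma \ref{lem:monocone}. Fix $k$ and $\eta>0$. Because $u_k$ is continuous and $u_k\ge 1$ on the compact set $K$, the set $V=\{u_k>1-\eta\}$ is open and contains $K$. Take a Lipschitz truncation $w_k=T(u_k)$ with $T(s)=\min(s,1-\eta)$. Then $w_k\in H^1(\Omega)$ with $\Vert\nabla w_k\Vert_{L^2}\le\Vert\nabla u_k\Vert_{L^2}$, and $w_k=1-\eta$ on $V$.

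Apply Lemma \ref{lem:monocone} to $w_k-(1-\eta)$, which vanishes on $V\cap\Omega_{\mathcal{C}}$ with $V\supset K$. This gives $\overline{\nu}(w_k)=\overline{\nu}(1-\eta)=1-\eta$, using the constant property of the lemma with $A=1$. Moreover, $\Vert w_k\Vert_{H^1(\Omega)}\le \Vert u_k\Vert_{H^1(\Omega)}$, since $|T(s)|\le |s|$ pointwise (as $1-\eta>0$) and the gradient does not increase. Hence
\[1-\eta=|\overline{\nu}(w_k)|\le \Vert\overline{\nu}\Vert_{H^1(\Omega)'}\,\Vert u_k\Vert_{H^1(\Omega)}\to 0,\]
which is a contradiction. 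Therefore ${\rm Cap}(K,\Omega)>0$.

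The main point to check is that the truncation $w_k$ is a legitimate function in $H^1(\Omega)$ to which the lemma's hypothesis applies, namely that it vanishes after subtracting the constant a.e. on $V\cap\Omega_{\mathcal{C}}$. This is immediate from continuity of $u_k$. Equivalently, one could bypass truncation and argue directly that $\nu_n(u_k)\ge 1-\eta$ for large $n$. This holds because the lines $\mathcal{D}_n\times\{y_n\}$ eventually lie in $V$, and $\nu_n$ is an average. Passing to the limit gives $\overline{\nu}(u_k)\ge 1-\eta$, and hence $\overline{\nu}(u_k)\ge1$. This direct route is simpler and I would use it, keeping the truncation argument as a backup.
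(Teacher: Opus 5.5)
Your proof is correct. It reaches the conclusion by a shorter route than the paper's.

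\textbf{The paper's route.} The paper views the $\nu_n$ as probability measures on $\overline{\Omega}$ and extracts a weak-$\star$ limit $\nu$ in $C^0(\overline{\Omega})'$. It then checks that $\nu$ is supported by $K$. It uses the convergence $\nu_n\to\overline{\nu}$ in $H^{-1}(\Omega)$ to remove the subsequence and to identify $\langle\nu,u\rangle=\langle\overline{\nu},u\rangle$ on $C^0(\overline{\Omega})\cap H^1_0(\Omega)$. Only then does it use $\nu\ge 0$, $\nu(K)=1$ and $u\ge\chi_K$ to get $1\le\langle\overline{\nu},u\rangle\le M\Vert\nabla u\Vert_{L^2(\Omega)}$.

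\textbf{Your route.} You never build $\nu$. For a continuous $u\ge\chi_K$ you note that the segments $\mathcal{D}_n\times\{y_n\}$ lie within distance $\sqrt{2}\,y_n$ of $K$. Hence they eventually lie in the open neighbourhood $\{u>1-\eta\}$ of $K$, so $\nu_n(u)\ge 1-\eta$ and $\overline{\nu}(u)=\lim_n\nu_n(u)\ge 1$. This uses only the norm convergence $\nu_n\to\overline{\nu}$ from Lemma~\ref{lem:monocone} and the geometric fact already used in its proof. It avoids the compactness, support and identification steps.

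\textbf{The contradiction is unnecessary.} The same chain $1\le\overline{\nu}(u)\le\Vert\overline{\nu}\Vert_{H^1(\Omega)'}\,C_P\,\Vert\nabla u\Vert_{L^2(\Omega)}$, with $C_P$ the Poincar\'e constant of the ball, gives the explicit bound ${\rm Cap}(K,\Omega)\ge (C_P\Vert\overline{\nu}\Vert_{H^1(\Omega)'})^{-2}$. This is the analogue of the paper's $1/M^2$.

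\textbf{What each route buys.} The paper's longer route produces an actual positive measure of mass $1$, carried by $K$, that lies in $H^{-1}(\Omega)$. That is the classical potential-theoretic witness of positive capacity, and it ties in with the Hausdorff-measure interpretation mentioned at the start of Section~\ref{sec:fractal}. Your truncation variant is also valid, with $A=1-\eta$ or linearity in the constant property, but as you say it is not needed.
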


\begin{proof}
Let $n \in \mathbb{N}^\star$.
Equation \eqref{def-nun} defines an element of $C^0(\overline{\Omega})'$ and an element of $H^{-1}(\Omega)$, that is to say
\begin{align*}
& \langle \nu_n,u\rangle_{C^0(\overline{\Omega})',C^0(\overline{\Omega})}=\nu_n(u), \textrm{ for } u \in C^0(\overline{\Omega}),
\\
& \langle \nu_n,u\rangle_{H^{-1}(\Omega),H^1_0(\Omega)}=\nu_n(u), \textrm{ for } u \in H^1_0(\Omega).
\end{align*}
As an element of $C^0(\overline{\Omega})'$, $\nu_n$ is a measure of mass $1$.
Since $C^0(\overline{\Omega})$ is a separable Banach space, $\nu_n$ converges up to a subsequence, as $n \to +\infty$, $C^0(\overline \Omega)'$-$\star$-weakly to some positive Radon measure $\nu$ of mass $1$.
Then $\nu$ is supported by $K$, since for $u \in C^0(\overline{\Omega})$ with $u=0$ in $K$, using the uniform continuity of $u$ in $\overline{\Omega}$, one proves that $\nu_n(u) \to 0$ as $n \to \infty$.

The proof of Lemma \ref{lem:monocone} shows that $\nu_n$
converges in $H^{-1}(\Omega)$ to  $\overline \nu$ of $H^{-1}(\Omega)$, as $n \to +\infty$. Since the support of $\nu$ is reduced to $K$, we get that $\nu$ is uniquely defined by $\overline \nu$, which implies that $\nu_n$ converge to $\nu$ without extraction of a subsequence.

If $u \in C^0(\overline{\Omega})\cap H^1_0(\Omega)$, $\langle \nu,u\rangle_{C^0(\overline{\Omega})',C^0(\overline{\Omega})}=\langle \overline \nu,u\rangle_{H^{-1}(\Omega),H^1_0(\Omega)}$ (this can be proven, for instance, using the fact that if $u \in C^0(\overline{\Omega}) \cap H^1_0(\Omega)$ there exist a sequence $(u_n)_{n \in \mathbb{N}}$ of $C^\infty_c(\Omega)$ converging uniformly and in $H^1_0(\Omega)$ to $u$).

\medskip

Let $M=\Vert \overline \nu \Vert_{H^{-1}(\Omega)}$ (taking $\Vert u \Vert_{H^1_0(\Omega)}^2=\int_\Omega |\nabla u(x) |^2 dx$).
Using \eqref{eq:defcap}, let us prove that
\begin{equation}
 {\rm Cap}(K,\Omega) \ge \frac 1 {M^2}.
\label{cap-K}
\end{equation}
Indeed, let $u \in C^\infty_c(\Omega)$, $u \ge \chi_K$.
Using the fact that $\nu$ is a measure of mass $1$ supported by $K$,
\[
1 \le \langle \nu,u\rangle_{C^0(\overline{\Omega})',C^0(\overline{\Omega})} = \langle \overline \nu,u\rangle_{H^{-1}(\Omega),H^1_0(\Omega)} \le M \Vert u \Vert_{H^1_0(\Omega)}.
\]
This gives $\Vert u \Vert_{H^1_0(\Omega)} \ge 1/M$ and then \eqref{cap-K}.
\end{proof}

 \subsection{Case in which $H^1_0(\Omega)\neq H_{\rm tr,0}^1(\Omega)$}\label{sec:cantcirc}

\begin{figure}[!ht]
\begin{center}
\begin{tikzpicture}[scale=3]
    \draw[draw=black, fill=red!10, thick] (0.25,0) circle (1);
    
        \def\h{0.5}      
        \def\ylow{0.03} 
        \def\yhigh{0.084} 
    
        \def\cantorL{0.5} 
    
        \tikzset{
            upperCone/.style={fill=red!10, draw=blue!40, opacity=0.4, ultra thin},
            cantorPoint/.style={black, thick}
        }
    
        \draw[dotted, gray!50] (0,0) -- (\cantorL, 0);
    
        \foreach \i in {0,...,15} {
            \pgfmathsetmacro{\p}{
                ((mod(floor(\i/8),2)*18 + mod(floor(\i/4),2)*6 + mod(floor(\i/2),2)*2 + mod(\i,2)*0.666)/27) * \cantorL
            }
    
            \coordinate (S) at (\p, 0);
    
            \fill[upperCone] (S) -- (\p - \h, \h) -- (\p + \h, \h) -- cycle;

            \draw[cantorPoint] (\p-0.005, 0) -- (\p+0.005, 0);
        }
\end{tikzpicture}
\end{center}
    \caption{Domain $\Omega$ with  $H^1_0(\Omega)\neq H_{\rm tr,0}^1(\Omega)$.} \label{fig:cantcirc}
\end{figure}

We consider the same example as \cite[Example 5.3]{are2024perron}. We let $d = 2$, and 
$\Omega = B((\frac 1 2,0),2)\setminus (\mathcal{C}_{1/3}\times\{0\})$ (see Figure \ref{fig:cantcirc}), which implies that $\partial\Omega = (\mathcal{C}_{1/3}\times\{0\})\cup \partial B((\frac 1 2,0),2)$ 
and that $\Omega_{\mathcal{C}}\subset \Omega$ (where the middle third Cantor set $\mathcal{C}_{1/3}$ and  $\Omega_{\mathcal{C}}$ are defined in Section \ref{sec:fractal}). We now give an example of a function $u \in H^1_{\mathrm{tr},0}(\Omega) \setminus H^1_0(\Omega)$.

\medskip

We first remark that, since the 1D-Lebesgue measure of the projection of $\mathcal{C}_{1/3}\times\{0\}$ onto $H_\theta$ for all $\theta\in\mathbb{S}^{d-1}$ is equal to $0$, then $u\in H_{\rm tr,0}^1(\Omega)$ if and only if $u\in H^1_0(B((\frac 1 2,0),2))$. Let $u$ be chosen such that $\overline{\nu}(u)\neq 0$ (for example, we assume that $u(x) = 1$ for a.e. $x\in \Omega_{\mathcal{C}}$, then, from Lemma \ref{lem:monocone}, we get that $\overline{\nu}(u)=1$). Assume that $(v_n)_n$ is a sequence of elements of $H^1_0(\Omega)$. For each $n$, the complement of support$(v_n)$ in $\mathbb{R}^2$ is open and contains the boundary of $\Omega$, including $\mathcal{C}_{1/3}\times\{0\}$. From Lemma \ref{lem:monocone}, we deduce that $\overline{\nu}(v_n)=0$. Since $\overline{\nu}$ is continuous, $u$ cannot be the limit in $H^1(\Omega)$ of the sequence $(v_n)_n$.

\subsection{Case  in which $\widetilde{H}^1(\Omega)\neq H^1_{\rm tr}(\Omega)$}\label{sec:bicone}

\begin{lemma}\label{lem:bicone}
    There exists a bounded open set $\Omega\subset\mathbb{R}^2$ such that $H^1_{\rm tr}(\Omega)\setminus \widetilde{H}^1(\Omega)$ is not empty.
\end{lemma}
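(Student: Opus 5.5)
We construct a ``bicone'' domain: two copies of the fractal comb $\Omega_{\mathcal{C}}$ meeting along the Cantor set, namely
\[ \Omega = \Omega_{\mathcal{C}} \cup \Omega_{\mathcal{C}}^- , \qquad \Omega_{\mathcal{C}}^- = \{(x_1,x_2) : (x_1,-x_2)\in \Omega_{\mathcal{C}}\}, \]
where $\Omega_{\mathcal{C}}$ is given by \eqref{eq:defomegac}. The two halves are disjoint open sets, and their closures intersect exactly along $K = \mathcal{C}_{1/3}\times\{0\}$, which lies in $\partial\Omega$. We take $u = 1$ on $\Omega_{\mathcal{C}}$ and $u = 0$ on $\Omega_{\mathcal{C}}^-$. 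Then $u\in H^1(\Omega)$, since $\nabla u = 0$ on the open set $\Omega$.

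First we show $u\in H^1_{\rm tr}(\Omega)$. We define $g=1$ on $\partial\Omega_{\mathcal{C}}\setminus K$, $g=0$ on $\partial\Omega_{\mathcal{C}}^-\setminus K$, and $g$ arbitrary (say $0$) on $K$. For every $\theta\in\mathbb{S}^1$, $\mathcal{P}_\theta(K)$ is $\lambda^1$-negligible: the projection of the Cantor set onto a line is either a single point (if $\theta$ is vertical) or a scaled copy of $\mathcal{C}_{1/3}$, which has measure zero. By Corollary~\ref{cor:negl2}, $K$ is therefore $\mu_\theta$-negligible. On each segment $]\alpha,\beta[\in\mathcal{I}_\theta(y)$, $u$ is constant, equal to $1$ or $0$ according to the half containing the segment, and the endpoint $\beta\theta+y$ lies outside $K$ for a.e.\ $y$. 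Hence $\gamma_\theta u = g$ $\mu_\theta$-a.e. Moreover $g$ is bounded, so $\int|g|^2\,{\rm d}\mu_\theta\le\lambda^2(\Omega)$ uniformly in $\theta$, and $g\in L^2(\partial\Omega,(\mu_\theta)_\theta)$.

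Next we show $u\notin\widetilde H^1(\Omega)$ by means of Lemma~\ref{lem:monocone}, applied to both halves. Lemma~\ref{lem:monocone} gives a continuous linear form $\overline\nu$ on $H^1(\Omega)$ (the hypothesis $\Omega_{\mathcal{C}}\subset\Omega$ holds) with $\overline\nu(u)=1$, because $u=1$ on $\Omega_{\mathcal{C}}$. By symmetry, the reflected construction gives $\overline\nu^-\in H^1(\Omega)'$, the limit of averages on the lines $x_2=-y_n$, with $\overline\nu^-(u)=0$. It then suffices to prove
\[ \overline\nu(v) = \overline\nu^-(v) \quad\text{for all } v\in H^1(\Omega)\cap C^0(\overline\Omega). \]
By continuity of $\overline\nu - \overline\nu^-$, this identity extends to $\widetilde H^1(\Omega)$, and it fails for $u$, so $u\notin\widetilde H^1(\Omega)$.

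The main obstacle is this identity for continuous $v$. The plan is to show $\nu_n(v)\to\int v\,{\rm d}\nu$ and $\nu_n^-(v)\to\int v\,{\rm d}\nu^-$, where $\nu,\nu^-$ are the weak-$\star$ limit measures, as in the proof of the positive-capacity corollary. By construction both $\nu_n$ and $\nu_n^-$ put mass $2^{-n}$ on each interval $I_m^{(n)}$, at heights $\pm y_n$. The intervals at height $+y_n$ and $-y_n$ are mirror images, so by uniform continuity of $v$ on $\overline\Omega$,
\[ |\nu_n(v)-\nu_n^-(v)|\le\sup_{|p-q|\le 2y_n}|v(p)-v(q)|\to0 . \]
Hence $\overline\nu(v)=\lim\nu_n(v)=\lim\nu_n^-(v)=\overline\nu^-(v)$. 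The subtle point is to justify that $\overline\nu(v)=\lim\nu_n(v)$ for continuous $v\in H^1$. This follows directly from the $H^1(\Omega)'$-convergence $\nu_n\to\overline\nu$ established in Lemma~\ref{lem:monocone}, since $v\in H^1(\Omega)$; the only care needed is that the trace of $v$ on the lines $\mathcal{D}_n$ equals its pointwise restriction, which holds for continuous $v$.
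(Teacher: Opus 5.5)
Your proof is correct and follows essentially the same route as the paper. You use the same domain $\Omega_{\mathcal C}\cup\psi(\Omega_{\mathcal C})$ and the same functional $\overline\nu-\overline\nu^-$ (the paper's $\overline\nu-\overline\nu_\psi$), which vanishes on $C^0(\overline\Omega)\cap H^1(\Omega)$ by uniform continuity, and the $\mu_\theta$-negligibility of $\mathcal C_{1/3}\times\{0\}$ gives the omnidirectional trace; the values $1,0$ instead of the paper's $\pm1$ are immaterial. One small slip, which does not affect the conclusion: the projection of $K$ onto $H_\theta$ reduces to a point when $\theta$ is horizontal, not vertical (for vertical $\theta$ it is $K$ itself).
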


        \begin{figure}[!ht]
    \begin{center}
    \begin{tikzpicture}[scale=6] 
    
        \def\h{0.5}      
        \def\ylow{0.03} 
        \def\yhigh{0.084} 
    
        \def\cantorL{0.5} 
    
        \tikzset{
            upperCone/.style={fill=red!12, draw=blue!40, opacity=0.4, ultra thin},
            lowerCone/.style={fill=red!12, draw=red!40, opacity=0.4, ultra thin},
            cantorPoint/.style={black, thick}
        }
    
        \draw[dotted, gray!50] (0,0) -- (\cantorL, 0);
    
        \foreach \i in {0,...,15} {
            \pgfmathsetmacro{\p}{
                ((mod(floor(\i/8),2)*18 + mod(floor(\i/4),2)*6 + mod(floor(\i/2),2)*2 + mod(\i,2)*0.666)/27) * \cantorL
            }
    
            \coordinate (S) at (\p, 0);
    
            \fill[upperCone] (S) -- (\p - \h, \h) -- (\p + \h, \h) -- cycle;
    
            \fill[lowerCone] (S) -- (\p - \h, -\h) -- (\p + \h, -\h) -- cycle;
    
            \draw[cantorPoint] (\p-0.001, 0) -- (\p+0.001, 0);
        }
    
        \node[right] at (\cantorL+0.05, 0) {$\mathcal{C}_{1/3}$};
    \end{tikzpicture}
    \end{center}
    \caption{The domain $\Omega$ with $\widetilde{H}^1(\Omega)\neq H^1_{\rm tr}(\Omega)$.\label{fig:bicone}}
    \end{figure}

\begin{proof}
    We let $d = 2$. We define the open set $\Omega$ (see Figure \ref{fig:bicone}) by
    \[ \Omega = \Omega_{\mathcal{C}}\cup \psi(\Omega_{\mathcal{C}}), \]
    where $\Omega_{\mathcal{C}}$ is defined in Section \ref{sec:fractal}, and $\psi:(x,y)\mapsto(x,-y)$ is the symmetry with respect to the second coordinate. Using the linear form $\overline{\nu}$ provided by Lemma \ref{lem:monocone}, we denote by $\overline{\nu}_\psi$ the linear form $u\mapsto \overline{\nu}(u\circ \psi)$.
    
    We claim that $(\overline{\nu}- \overline{\nu}_\psi)(u) = 0$ for all $u\in C^0(\overline{\Omega})\cap H^1(\Omega)$. Indeed, we have
    \[ (\overline{\nu}- \overline{\nu}_\psi)(u) = \lim_{n\to\infty} \frac 1 {|\mathcal{D}_n|}\int_{\mathcal{D}_n} (u(x,y_n)-u(x,-y_n)){\ \rm d}x. \]
    Since $u\in C^0(\overline{\Omega})$ and $\overline{\Omega}$ is compact, for all $\varepsilon>0$ there exists $\delta>0$ with $|u(x,y) - u(x',y')|\le \varepsilon$ if $|(x,y)-(x',y')|\le \delta$.  Therefore, for $n$ such that $y_n\le \delta/2$,
    \[ \big| \frac 1 {|\mathcal{D}_n|}\int_{\mathcal{D}_n}  (u(x,y_n)-u(x,-y_n)){\ \rm d}x \big|\le \varepsilon. \]
    Hence, for a sequence $(u_k) \in C^0(\overline{\Omega})\cap H^1(\Omega)$ converging in $H^1(\Omega)$ to some $u\in H^1(\Omega)$, since $\overline{\nu}- \overline{\nu}_\psi$ is a continuous linear form, we get $(\overline{\nu}- \overline{\nu}_\psi)(u)=0$. \medskip
    
    We define the function $\widehat{u}$ by $\widehat{u}(x,y) = 1$ for $y>0$ and $-1$ for $y<0$. It satisfies $(\overline{\nu}- \overline{\nu}_\psi)(\widehat{u}) = 1 - (-1) = 2$, which implies that $\widehat{u}\notin \widetilde{H}^1(\Omega)$. We may write that 
    $ \partial\Omega = \partial\Omega_{\mathcal{C}}\cup \partial\psi(\Omega_{\mathcal{C}})$, with $\partial\Omega_{\mathcal{C}}\cap \partial\psi(\Omega_{\mathcal{C}}) = \mathcal{C}_{1/3}\times\{0\}$. Since  $\widehat{u} = 1$ on $\Omega_{\mathcal{C}}$, we get that $\gamma_\theta \widehat{u} = 1$ $\mu_\theta$-a.e. on $\partial\Omega_{\mathcal{C}}$ for all $\theta\in \mathbb{S}^{d-1}$. Similarly, we have $\gamma_\theta \widehat{u} = -1$ $\mu_\theta$-a.e. on $\partial\psi(\Omega_{\mathcal{C}})$ for all $\theta\in \mathbb{S}^{d-1}$. Since $\mathcal{C}_{1/3}\times\{0\}$ is $\mu_\theta$-negligible for  all $\theta\in \mathbb{S}^{d-1}$, we get that $\widehat{u}\in H^1_{\rm tr}(\Omega)$ with ${\rm tr}(\widehat{u}) = 1$ a.e. on $\partial\Omega_{\mathcal{C}}$ and $-1$ on $\partial\psi(\Omega_{\mathcal{C}})$.
\end{proof}

\medskip

Let us now consider the following non-homogeneous Dirichlet problems, on the domain $\Omega  = \Omega_{\mathcal{C}}\cup \psi(\Omega_{\mathcal{C}})$ studied in Lemma \ref{lem:bicone}. 
Let us first define $H_0 \subset H^1_{\rm tr}(\Omega)$ as the set of all elements $u\in H^1_{\rm tr}(\Omega)$ such that ${\rm tr}(u) = 0$  on $]-1,3[\times\{1\}$ and  ${\rm tr}(u) = 0$  on $]-1,3[\times\{-1\}$,  $\mu_\theta$-a.e. for all $\theta\in\mathbb{S}^{d-1}$. The Poincar\'e inequality 
\eqref{eq:pointheta} used with $\theta = (0,\pm 1)$ suffices to prove that $H_0$ is a Hilbert space with the scalar product $\int_\Omega \nabla u(x)\cdot\nabla v(x){\rm d}x$. We now study the two following problems.

\medskip

{\bf Problem in $\widetilde{H}^1(\Omega)$:} Find $u\in \widetilde{H}^1(\Omega)$ such that
\[ \begin{cases}
    {\rm tr}(u) = 1 \text{ on } ]-1, 3[ \ \times \{1\} \text{ and } {\rm tr}(u) = -1 \text{ on } ]-1, 3[ \ \times \{-1\} \ \mu_\theta\text{-a.e. for all } \theta\in\mathbb{S}^{d-1}, \vspace{0.2cm} \\
    \displaystyle \forall v\in H_0\cap \widetilde{H}^1(\Omega), \int_\Omega \nabla u(x)\cdot\nabla v(x){\rm d}x = 0.
\end{cases} \]
Then the solution of this problem is the function defined by $u(x,y) = y$ a.e. in $\Omega$. \pagebreak

{\bf Problem in $H^1_{\rm tr}(\Omega)$:} Find $u\in H^1_{\rm tr}(\Omega)$ such that 
\[ \begin{cases}
    {\rm tr}(u) = 1 \text{ on } ]-1, 3[ \ \times \{1\} \text{ and } {\rm tr}(u) = -1 \text{ on } ]-1, 3[ \ \times \{-1\} \ \mu_\theta\text{-a.e. for all } \theta\in\mathbb{S}^{d-1}, \vspace{0.2cm} \\
    \displaystyle \forall v\in H_0, \int_\Omega \nabla u(x)\cdot\nabla v(x){\rm d}x = 0.
\end{cases} \]
The solution of this problem is the function defined by $u(x,y) = 1$ a.e. in $\Omega$ for $y>0$ and  $-1$ for $y<0$.

\subsection{The case of Lipschitz domains}

We recall that, in the case of Lipschitz domains, the relation $\widetilde{H}^1(\Omega) = H^1(\Omega)$ holds, and therefore  $H^1_{\rm tr}(\Omega) = H^1(\Omega)$. The trace in the standard sense is obtained by passing to the limit in $L^2(\partial\Omega, \mathcal{H}^{d-1})$ on the restriction of $C^1$ functions on the boundary, whereas the omnidirectional trace is obtained by passing to the limit in $L^2(\partial\Omega,(\mu_\theta)_{\theta\in\mathbb{S}^{d-1}})$. Although the spaces $L^2(\partial\Omega, \mathcal{H}^{d-1})$ and $L^2(\partial\Omega,(\mu_\theta)_{\theta\in\mathbb{S}^{d-1}})$ do not have the same construction, the following lemma implies that the standard trace coincides with the omnidirectional trace in this case.

\begin{lemma}[The measure $\mu_\theta$ in the case of a Lipschitz domain]\label{exa:cun}
    Let $\Omega$ be a Lipschitz domain, denote by ${\bm n}(z)$ the unit outward normal to the boundary for $\mathcal{H}^{d-1}$-a.e. $z\in\partial\Omega$. Then 
    \[ \mu_\theta(z) = \ell_{\minus\theta}(z) \max(\theta\cdot {\bm n}(z),0){\mathcal{H}}^{d-1}(z). \]
\end{lemma}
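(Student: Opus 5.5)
We identify $\mu_\theta$ with a density against $\mathcal{H}^{d-1}$ by testing both sides on a Borel set $A\subset\partial\Omega$, and we reduce everything to a computation in the coordinates $x=y+s\theta$, $y\in H_\theta$. By Definition \ref{def:dirmeasurebound} and Fubini,
\[
\mu_\theta(A)=\int_{H_\theta}\sum_{]\alpha,\beta[\in\mathcal I_\theta(y)}(\beta-\alpha)\,\chi_A(y+\beta\theta){\ \rm d}y
=\int_{H_\theta}\sum_{z\in\partial_\theta\Omega\cap(y+\mathbb R\theta)}\ell_{\minus\theta}(z)\,\chi_A(z){\ \rm d}y .
\]
This uses the fact that each chord $]\alpha,\beta[$ has upper endpoint $z=y+\beta\theta$ with $\ell_{\minus\theta}(z)=\beta-\alpha$. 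It is therefore enough to prove the coarea-type identity
\[
\int_{H_\theta}\sum_{z\in\partial_\theta\Omega\cap(y+\mathbb R\theta)} f(z){\ \rm d}y=\int_{\partial\Omega} f(z)\max(\theta\cdot{\bm n}(z),0){\ \rm d}\mathcal{H}^{d-1}(z)
\]
for nonnegative Borel $f$ supported on $\partial_\theta\Omega$. We then apply it with $f=\ell_{\minus\theta}\chi_A$. We also need to show that $\partial\Omega\setminus\partial_\theta\Omega$ carries no mass on the right-hand side where $\theta\cdot{\bm n}>0$, up to $\mathcal{H}^{d-1}$-null sets.

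The first tool is the area formula for the Lipschitz map $\mathcal P_\theta$ restricted to $\partial\Omega$ (e.g.\ \cite{evans2015meas}). On $\partial\Omega$, the tangential Jacobian of this projection equals $|\theta\cdot{\bm n}(z)|$ at $\mathcal{H}^{d-1}$-a.e.\ $z$. This gives
\[
\int_{H_\theta}\sum_{z\in\partial\Omega\cap(y+\mathbb R\theta)}f(z){\ \rm d}y=\int_{\partial\Omega}f(z)\,|\theta\cdot{\bm n}(z)|{\ \rm d}\mathcal{H}^{d-1}(z).
\]

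It then remains to show that, for $\lambda^{d-1}$-a.e.\ $y$, the points of $\partial\Omega\cap(y+\mathbb R\theta)$ split as follows. The exits $\partial_\theta\Omega$ are exactly the points with $\theta\cdot{\bm n}>0$. The entries $\partial_{\minus\theta}\Omega$ are those with $\theta\cdot{\bm n}<0$. There are no other points, and points with $\theta\cdot{\bm n}=0$ contribute nothing. For this, work in a local Lipschitz chart where $\Omega=\{x_d<\varphi(x')\}$. By Rademacher and the area formula, for a.e.\ line parallel to $\theta$, every boundary point it meets lies where $\varphi$ is differentiable. At such points one has $\theta\cdot{\bm n}\neq0$, because the image under $\mathcal P_\theta$ of the set where $\theta\cdot{\bm n}=0$ is null, by the same area formula. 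At a point of differentiability with $\theta\cdot{\bm n}(z)>0$, the line is transversal to the tangent plane. Hence $z-s\theta\in\Omega$ and $z+s\theta\notin\overline\Omega$ for small $s>0$, so $z=\Phi_\theta(z-s\theta)\in\partial_\theta\Omega$. Conversely, if $z\in\partial_\theta\Omega$ with $\theta\cdot{\bm n}(z)<0$, then $\Omega$ would lie on the wrong side of $z$. Summing the restricted identity with $f=\ell_{\minus\theta}\chi_A\chi_{\partial_\theta\Omega}$ gives $\mu_\theta(A)=\int_A\ell_{\minus\theta}\max(\theta\cdot{\bm n},0){\ \rm d}\mathcal{H}^{d-1}$.

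The main obstacle is this last step: the a.e.-line transversality and the identification of $\partial_\theta\Omega$ with $\{\theta\cdot{\bm n}>0\}$ up to null sets. It is genuinely local, so the argument must be patched across finitely many charts. The delicate case is directions $\theta$ nearly tangent to the graph, where the chart direction and $\theta$ differ. I would handle this by noting that the exceptional set $\{z:\varphi\text{ not differentiable at }z'\}\cup\{\theta\cdot{\bm n}=0\}$ is $\mathcal{H}^{d-1}$-null resp.\ has $\lambda^{d-1}$-null projection. The contributions of these sets to both sides of the identity therefore vanish, which also reconciles the statement with Lemma \ref{lem:partialthetaboundary} and Corollary \ref{cor:negl2}.
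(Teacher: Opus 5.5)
Your argument is correct, but it takes a genuinely different route from the paper.

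\textbf{The paper's route.} The paper tests $\mu_\theta$ against $\varphi\in C^0(\partial\Omega)$. It applies the divergence theorem to the field $\bm u(x)=\delta_{\minus\theta}(x)\,\varphi(\Phi_\theta(x))\,\theta$, whose divergence is $\varphi\circ\Phi_\theta$, so that $\int_\Omega {\rm div}\,\bm u\,{\rm d}x=\int_{\partial\Omega}\varphi\,{\rm d}\mu_\theta$ by definition. It then asserts, without proof, that the normal trace of $\bm u$ is $\varphi\,\ell_{\minus\theta}\max(\theta\cdot\bm n,0)$, and concludes by identifying the two measures.

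This is short and fits the Gauss--Green framework, but the asserted normal-trace identity carries all the content. The field $\bm u$ is merely bounded and in general discontinuous in $\Omega$, since it jumps wherever $\Phi_\theta$ does. Identifying its normal trace requires two facts:
\begin{itemize}
\item chords leave $\Omega$ exactly at points with $\theta\cdot\bm n>0$ and enter at points with $\theta\cdot\bm n<0$;
\item line-by-line boundary contributions convert into integrals against $\mathcal H^{d-1}$ with weight $|\theta\cdot\bm n|$.
\end{itemize}
These are precisely your transversality step and your area-formula step.

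\textbf{Your route.} You make both steps explicit and work directly with Borel sets, so no density or Riesz argument is needed. The price is invoking the area formula for $\mathcal P_\theta$ on the rectifiable set $\partial\Omega$, or chart by chart on Lipschitz graphs.

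\textbf{A simplification.} The step you flag as the main obstacle is easier than you make it. The area formula already counts every intersection point of each line with $\partial\Omega$, including points in the exceptional sets. So you need no statement about almost every line missing non-differentiability points or points where $\theta\cdot\bm n=0$. It suffices that $\chi_{\partial_\theta\Omega}\,|\theta\cdot\bm n|=\max(\theta\cdot\bm n,0)$ holds $\mathcal H^{d-1}$-a.e. on $\partial\Omega$. Your pointwise argument gives exactly this: at points where the chart function is differentiable, $g(s)=(z+s\theta)_d-\varphi((z+s\theta)')$ has $g'(0)$ of the sign of $\theta\cdot\bm n(z)$. That works whatever the angle between $\theta$ and the chart direction, so the ``nearly tangent'' case needs no separate treatment.
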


\begin{proof}
    Let $\varphi\in C^0(\partial\Omega)$ be given. We define the function ${\bm u}:\Omega\to \mathbb{R}^d$ such that 
    \[ {\bm u}(x) = \big((x- \Phi_{\minus\theta}(x))\cdot\theta\big) \varphi(\Phi_{\theta}(x))\theta. \]
    Note that $u \in H(\mathrm{div}, \Omega)$ with $\mathrm{div}\, u(x) = \varphi(\Phi_\theta(x))$ (this follows by taking a basis with $\theta$ as the first vector). We also notice that the function ${\bm u}$ has a normal trace $\gamma{\bm u}\in L^\infty(\partial\Omega,\mathcal{H}^{d-1})$ which is such that $\gamma{\bm u}(z) = \varphi(z) \ell_{\minus\theta}(z) \max(\theta\cdot {\bm n}(z),0)$ for $\mathcal{H}^{d-1}$-a.e. $z\in\partial\Omega$. Then, applying the divergence theorem (also named the  Gauss-Green theorem \cite{pfeffer}), we get
     \[ \int_\Omega {\rm div}{\bm u}(x) {\ \rm d}x 
        = \int_\Omega\varphi(\Phi_\theta (x)) {\ \rm d}x 
        = \int_{\partial\Omega} \varphi(z) \ell_{\minus\theta}(z) \max(\theta\cdot {\bm n}(z),0) {\rm d}{\mathcal{H}}^{d-1}(z). \]
    The identification of the last term with $\int_{\partial\Omega} \varphi(z) {\rm d}\mu_\theta(z)$ concludes the proof.
\end{proof}

\subsection{The case of domains $\Omega$ such that $H_{\rm tr}^1(\Omega)\neq H^1(\Omega)$}\label{exa:cracks}

The presence of cracks in the domain prevents from the equality $H_{\rm tr}^1(\Omega)= H^1(\Omega)$, as shown by the two following examples.

\begin{example}(1D case where $H_{\rm tr}^1(\Omega)\neq H^1(\Omega)$){\bf .}\label{exa:fisund}
    Let us take the example $\Omega = \ ]0,1[ \ \cup \ ]1,2[$, and $u(x) = x$ for $x \in \ ]0,1[$, and  $u(x) = x-1$ for $x \in \ ]1,2[$. Then $\gamma_1 u(1) = 1$, $\gamma_1 u(2) = 1$, $\gamma_{\minus 1} u(0) = 0$, $\gamma_{\minus 1} u(1) = 0$. Hence in general, for $z\in \partial\Omega$, $\gamma_\theta u(z)$ is not constant with respect to $\theta$.
\end{example}

\begin{example}(2D case where $H_{\rm tr}^1(\Omega)\neq H^1(\Omega)$){\bf .}\label{exa:fisdeuxd}
    Let us consider $\Omega = (]0,1[\times ]-1,1[) \setminus (\{\frac 1 2\}\times[0,1])$. We define the function $u(x_1,x_2) = - x_2$ for $(x_1,x_2)\in ]0,\frac 1 2[\times ]0,1[ $, $u(x_1,x_2) = x_2$ for $(x_1,x_2)\in ]\frac 1 2,1[\times ]0,1[ $ and $u(x_1,x_2) = 0$ for $(x_1,x_2)\in ]0,1[\times ]-1,0[$. The domain is connected, unlike Example \ref{exa:fisund}, and $\gamma_{(1,0)} u( (\frac 1 2,s)) = -s$ and  $\gamma_{(-1,0)} u( (\frac 1 2,s)) = s$ for all $s\in]0,1[$.
\end{example}

\subsection{The case of a cuspidal domain}\label{exa:cuspidal}

Recall that cuspidal domains are not Lipschitz domains. Nevertheless, the presence of one cusp, even severe, does not prevent the equality 
 $H^1_{\rm tr}(\Omega) = H^1(\Omega)$. It is noteworthy that in this case, $H^{1/2}(\partial\Omega)$ is not a subset of $L^2(\partial\Omega, \mathcal{H}^{d-1})$.
    Let us assume that $\Omega\subset\mathbb{R}^2$ is the (non-Lipschitz) domain defined by (see Figure \ref{fig:triangle})
    \[ \Omega = \{(x_1,x_2)\in \mathbb{R} \times ]0,1[ \ : -x_2^3 < x_1 < x_2^3\}. \]
    
    \begin{figure}[!ht]
    \begin{center}

    \begin{tikzpicture}[scale=2]
    \fill[red!10] 
        plot[domain=0:1, variable=\y] ({-\y^3}, \y) -- 
        plot[domain=1:0, variable=\y] ({\y^3}, \y) -- cycle;

    \draw[thick, black] plot[domain=0:1, variable=\y] ({\y^3}, \y);
    \draw[thick, black] plot[domain=0:1, variable=\y] ({-\y^3}, \y);
    
    \draw[thick, black] (-1,1) -- (1,1);

    \draw[->] (-1.5,0) -- (1.5,0) node[right] {$x_1$};
    \draw[->] (0,-0.2) -- (0,1.5) node[above] {$x_2$};
\end{tikzpicture}
\end{center}
        \caption{Cuspidal domain $\Omega$. \label{fig:triangle}}
    \end{figure}

    Assume that $\theta = (1,0)$. Then we get for all $(x_1,x_2)\in\Omega$ that $\Phi_{\theta}(x_1,x_2) = (x_2^3,x_2)$, $\ell_{\theta}(x_1,x_2) = 2 x_2^3$. Considering the right side of the domain (in red in  Figure \ref{fig:triangle}), we have 
    \[ \theta\cdot {\bm n}(x_2^3,x_2) = \frac 1 {\sqrt{1 + 9 x_2^4}}. \] 
    Hence, for a.e. $x_2 \in ]0,1[$, 
    \[ \mu_{\theta}(x_2^3,x_2) = \frac {2 x_2^3} {\sqrt{1 + 9 x_2^4}}  \mathcal{H}^1(x_2^3,x_2), \]
    and $\mu_{\theta}$ is equal to $0$ on the remaining of the boundary (in black in Figure \ref{fig:triangle}). Then the function $u(x_1,x_2) = x_2^{-\alpha}$ belongs to $H^1(\Omega)\subset W_\theta(\Omega)$ for all $\alpha \in \ ]\frac 1 2,1[$ (indeed, the function $x_2^{-2\alpha-2} \times x_2^3$ is integrable on $]0,1[$), and a directional trace of $u$ in the sense of Definition \ref{def:dirtrace} is the function $g(x_1,x_2) = x_2^{-\alpha}$ on all edges. In this case, $g$ is also equal a.e. to the trace of $u$ as defined in Section \ref{sec:tracehun}. Note that $g \in \mathcal{L}^2(\partial\Omega, \mu_\theta)$ but $g \notin \mathcal{L}^2(\partial\Omega, \mathcal{H}^1)$ (the function $x_2^{-2\alpha}$ is not integrable on $]0,1[$).

\subsection{Cases  in which the Hausdorff measure $\mathcal{H}^{d-1}$ is locally infinite on $\partial\Omega$}\label{sec:cantor}

\begin{example}(complement of Cantor set){\bf .} \label{ex:cantor}
    For $\rho\in \ ]0,\frac 1 3]$, we construct the $\rho$-Cantor set, denoted by $\mathcal{C}_\rho\subset [0,1]$ by the following procedure.
    \begin{itemize}
        \item We define $a_1=0$ and $b_1 = 1$;
        \item For all $k\in\mathbb{N}$ $m=2^k,\ldots,2^{k+1}-1$, we define $c_m = \frac {a_m+b_m} 2 -\frac  {\rho^{k+1}} 2$ and $d_m = \frac {a_m+b_m} 2 +\frac  {\rho^{k+1}} 2$; we then define $a_{2m} = a_m$, $b_{2m} = c_m$, $a_{2m+1} = d_m$, $b_{2m+1} = b_m$;
    \end{itemize}
    Hence we have $c_1 = \frac {a_1+b_1} 2 - \frac  \rho 2$,  $d_1 = \frac {a_1+b_1} 2 + \frac \rho 2$, $a_2 = a_1$, $b_2 = c_1$, $a_3 = d_1$ and $b_3 = b_1$. We set
    \[ \Omega = \bigcup_{m\ge 1} ]c_m,d_m[. \]
    Then $\mathcal{C}_\rho = \partial\Omega = [0,1] \setminus \Omega$, since $\Omega$ is dense in $[0,1]$. If $\rho = 1/3$, we recover the well-known ``middle third Cantor set''. The Lebesgue measure of $\Omega$ is equal to $\frac {\rho} {1 - 2\rho}$, therefore the Lebesgue measure of $\mathcal{C}_\rho$ is equal to $\frac {1-3\rho} {1 - 2\rho}$. Since we are in 1D, the unit sphere is $\mathbb{S}^{d-1} = \{-1,1\}$. The measure $\mu_1$ (resp. $\mu_{\minus 1}$) is the discrete measure on points $d_m$ (resp. $c_m$) with weight $d_m-c_m$. This means that, for all $A \in \mathcal{B}(\partial \Omega)$,
    $$
    \mu_1(A)= \sum_{m \ge 1} (d_m-c_m) \chi_A(d_m)~\text{and}~\mu_{\minus 1}(A)=\sum_{m \ge 1} (d_m-c_m) \chi_A(c_m).
    $$
    The Hausdorff measure $\mathcal{H}^{0}$ (which is the counting measure) of $ \partial\Omega$ is infinite, the Hausdorff measure $\mathcal{H}^{1}$ (which coincide with the Lebesgue measure) of $ \partial\Omega$ is null for $\rho = 1/3$ and positive for $\rho < 1/3$. This examples cannot be handled in the framework of the approximative trace of \cite{ae2011dirtoneu,sauter2020uniq} since the measure $\mathcal{H}^{d-1}$ is locally infinite everywhere on $\partial\Omega$. Indeed, we have $\partial\Omega = \mathcal{C}_\rho$ and $\overline{\Omega} = [0,1]$.
\end{example}


\begin{figure}[!ht]
\begin{center}
    \begin{tikzpicture}[x=6cm, y=3cm] 
        
        \def\rho{0.25}
        \def\hOrig{1}
        \def\hBase{1}
        \colorlet{fillColor}{red!10}

        \expandafter\gdef\csname a1\endcsname{0}
        \expandafter\gdef\csname b1\endcsname{1}
        
        \fill[black] (0,0) rectangle (1,\hOrig+\hBase);

        \fill[fillColor] (0,0) rectangle (1,\hBase);

        
        \foreach \k in {0,1,2,3,4} {
            \pgfmathsetmacro{\startM}{int(2^\k)}
            \pgfmathsetmacro{\endM}{int(2^(\k+1)-1)}
            
            \foreach \m in {\startM,...,\endM} {
                \pgfmathsetmacro{\am}{\csname a\m\endcsname}
                \pgfmathsetmacro{\bm}{\csname b\m\endcsname}
                
                \pgfmathsetmacro{\holeWidth}{(\rho)^(\k+1)}
                \pgfmathsetmacro{\midpoint}{(\am + \bm)/2}
                \pgfmathsetmacro{\cm}{\midpoint - \holeWidth/2}
                \pgfmathsetmacro{\dm}{\midpoint + \holeWidth/2}
                
                \fill[fillColor] (\cm, \hBase) rectangle (\dm, \hOrig+\hBase);
                
                \pgfmathsetmacro{\idxL}{int(2*\m)}
                \pgfmathsetmacro{\idxR}{int(2*\m+1)}
                
                \expandafter\xdef\csname a\idxL\endcsname{\am}
                \expandafter\xdef\csname b\idxL\endcsname{\cm}
                \expandafter\xdef\csname a\idxR\endcsname{\dm}
                \expandafter\xdef\csname b\idxR\endcsname{\bm}
            }
        }

        
        \draw[thin, black] (0,0) rectangle (1, \hBase+\hOrig);
        
        \node[below, font=\tiny] at (0, 0) {0};
        \node[below, font=\tiny] at (1, 0) {1};

    \end{tikzpicture}
     \begin{tikzpicture}[x=6cm, y=3cm] 
        
        \def\rho{0.33333}
        \def\hOrig{1}
        \def\hBase{1}
        \colorlet{fillColor}{red!10}

        \expandafter\gdef\csname a1\endcsname{0}
        \expandafter\gdef\csname b1\endcsname{1}
        
        \fill[black] (0,0) rectangle (1,\hOrig+\hBase);

        \fill[fillColor] (0,0) rectangle (1,\hBase);

        
        \foreach \k in {0,1,2,3,4} {
            \pgfmathsetmacro{\startM}{int(2^\k)}
            \pgfmathsetmacro{\endM}{int(2^(\k+1)-1)}
            
            \foreach \m in {\startM,...,\endM} {
                \pgfmathsetmacro{\am}{\csname a\m\endcsname}
                \pgfmathsetmacro{\bm}{\csname b\m\endcsname}
                
                \pgfmathsetmacro{\holeWidth}{(\rho)^(\k+1)}
                \pgfmathsetmacro{\midpoint}{(\am + \bm)/2}
                \pgfmathsetmacro{\cm}{\midpoint - \holeWidth/2}
                \pgfmathsetmacro{\dm}{\midpoint + \holeWidth/2}
                
                \fill[fillColor] (\cm, \hBase) rectangle (\dm, \hOrig+\hBase);
                
                \pgfmathsetmacro{\idxL}{int(2*\m)}
                \pgfmathsetmacro{\idxR}{int(2*\m+1)}
                
                \expandafter\xdef\csname a\idxL\endcsname{\am}
                \expandafter\xdef\csname b\idxL\endcsname{\cm}
                \expandafter\xdef\csname a\idxR\endcsname{\dm}
                \expandafter\xdef\csname b\idxR\endcsname{\bm}
            }
        }

        
        \draw[thin, black] (0,0) rectangle (1, \hBase+\hOrig);
        
        \node[below, font=\tiny] at (0, 0) {0};
        \node[below, font=\tiny] at (1, 0) {1};

    \end{tikzpicture}
    \end{center}
    \caption{Example \ref{exa:bicantor}. Left: $\rho = \frac 1 4$. Right: $\rho = \frac 1 3$. \label{fig:bicantor}}
\end{figure}

\begin{example}\label{exa:bicantor}
    The domains $\Omega$ is shown in Figure \ref{fig:bicantor} in the cases $\rho = \frac 1 4$ and $\rho = \frac 1 3$. We denote, for a given $\rho\in \ ]0,\frac 1 3]$, by
    \[ \Omega = \Big( ([0,1]\setminus \mathcal{C}_\rho )\times ]-1,1[ \Big)\cup \Big(]0,1[\times ]-1,0[\Big). \]
    Then the boundary of $\Omega$ contains $\mathcal{C}_\rho \times [0,1[$, which has infinite local Hausdorff measure $\mathcal{H}^{d-1}$ for $\rho < \frac 1 3$, since it contains the set  $\mathcal{C}_\rho \times [0,1]$. But the set $\partial_{\mathbb{S}}\Omega  $ is given by the union of all the following sets:
    \begin{itemize}
        \item the boundary of $]0,1[\times ]-1,1[$,
        \item the sets of the form $\{c_m\}\times [0,1]$ and $\{d_m\}\times [0,1]$,
        \item the set $\mathcal{C}_\rho \times \{0\}$.
    \end{itemize} \pagebreak
    Then $\Omega$ is connected and $H_{\rm tr}^1(\Omega) = H^1(\Omega)$, applying Lemma \ref{lem:sufcondmono} below with $\Omega_0 = \ ]0,1[\times ]-1,0[$ and $\Omega_m = ]c_m,d_m[\times ]0,1[$.
\end{example}

The next lemma gives a simple sufficient condition leading to $H_{\rm tr}^1(\Omega) = H^1(\Omega)$.

\begin{lemma}\label{lem:sufcondmono}
    Let $(\Omega_i)_{i\in I}$ be a countable family of open subsets of $\Omega$, such that
    \begin{itemize}
        \item for all $i\in I$, $H_{\rm tr}^1(\Omega_i) = H^1(\Omega_i)$,
        \item for all $\theta\in \mathbb{S}^{d-1}$, $\partial_\theta\Omega\setminus \bigcup_{i\in I} \partial_\theta\Omega_i$ is $\mu_\theta$-negligible,
        \item for all $i\in I$, $\partial_{\mathbb{S}}\Omega  \cap \partial_{\mathbb{S}}\Omega _i \cap \bigcup_{j\neq i} \partial_{\mathbb{S}}\Omega _j $ is $\mu_\theta$-negligible for all $\theta\in \mathbb{S}^{d-1}$ (see \eqref{eq:defdomegatilde} for the definition of $\partial_{\mathbb{S}}\Omega $). 
    \end{itemize}
    Then $H_{\rm tr}^1(\Omega) = H^1(\Omega)$.  
\end{lemma}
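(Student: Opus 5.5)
Fix $u\in H^1(\Omega)$; we must produce one function $g$ on $\partial\Omega$ with $g=\gamma_\theta u$ $\mu_\theta$-a.e. for every $\theta\in\mathbb{S}^{d-1}$. Since the bound $\int|\gamma_\theta u|^2{\rm d}\mu_\theta\le 2\max(1,{\rm diam}(\Omega)^2)\Vert u\Vert_{H^1(\Omega)}^2$ of Proposition \ref{prop:tracedirectionnelle} is uniform in $\theta$, it suffices to build such a $g$; membership in $L^2(\partial\Omega,(\mu_\theta)_\theta)$ then follows automatically. For each $i\in I$, the restriction $u_i=u_{|\Omega_i}$ lies in $H^1(\Omega_i)=H^1_{\rm tr}(\Omega_i)$ by the first hypothesis. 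We therefore pick a representative $g_i$ of ${\rm tr}(u_i)$, which satisfies $g_i=\widehat{\gamma}^{(i)}_\theta u_i$ $\widehat{\mu}^{(i)}_\theta$-a.e. for all $\theta$.

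We define $g$ on $\partial\Omega$ as follows. Let $N_i=\partial_{\mathbb{S}}\Omega\cap\partial_{\mathbb{S}}\Omega_i\cap\bigcup_{j\ne i}\partial_{\mathbb{S}}\Omega_j$. For $z\in(\partial_{\mathbb{S}}\Omega\cap\partial_{\mathbb{S}}\Omega_i)\setminus N_i$, set $g(z)=g_i(z)$, and set $g=0$ elsewhere. This is well defined: if such a $z$ belonged to $\partial_{\mathbb{S}}\Omega_j$ for some $j\ne i$, it would lie in $N_i$.

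Now fix $\theta$. By the second hypothesis, $\mu_\theta$-a.e. $z\in\partial_\theta\Omega$ lies in some $\partial_\theta\Omega_i\subset\partial_{\mathbb{S}}\Omega_i$, and $\mu_\theta$ is carried by $\partial_\theta\Omega\subset\partial_{\mathbb{S}}\Omega$ (Appendix results). Removing the countable union $\bigcup_i N_i$, which is $\mu_\theta$-negligible by the third hypothesis, we get for $\mu_\theta$-a.e. $z$ an index $i$ with $z\in\partial_\theta\Omega\cap\partial_\theta\Omega_i$ and $g(z)=g_i(z)$. Countability of $I$ lets us write $\partial_\theta\Omega$ as the union of the sets $E_i:=\partial_\theta\Omega\cap\partial_\theta\Omega_i$ up to a $\mu_\theta$-null set. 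It therefore remains to show that, for each fixed $i$, $g_i=\gamma_\theta u$ $\mu_\theta$-a.e. on $E_i$.

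By Lemma \ref{lem:included} applied with $\widehat\Omega=\Omega_i$, we have $\widehat{\gamma}_\theta u_i=\gamma_\theta u$ $\mu_\theta$-a.e. on $E_i$. The main obstacle is the mismatch of measures. We know $g_i=\widehat\gamma_\theta u_i$ only $\widehat\mu_\theta$-a.e., where $\widehat\mu_\theta$ is the directional measure of $\Omega_i$, but we need this $\mu_\theta$-a.e. on $E_i$. To handle this, go back through projections, as in the proof of Lemma \ref{lem:included}. The set $F=\{z\in\partial_\theta\Omega_i: g_i(z)\ne\widehat\gamma_\theta u_i(z)\}$ is $\widehat\mu_\theta$-null, so by Lemma \ref{lem:negl} (applied to $\Omega_i$) the set of $y$ for which some right endpoint of a chord of $\Omega_i$ on the line $y+\mathbb{R}\theta$ lies in $F$ is $\lambda^{d-1}$-null. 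Hence $\mathcal{P}_\theta(F)$ is $\lambda^{d-1}$-negligible. Corollary \ref{cor:negl2} (applied to $\Omega$) then gives that $F\cap\partial_\theta\Omega$ is $\mu_\theta$-negligible. Combining these, $g=\gamma_\theta u$ $\mu_\theta$-a.e. on each $E_i$, hence on $\partial\Omega$, for every $\theta$. This shows $u\in H^1_{\rm tr}(\Omega)$ with ${\rm tr}(u)=g$.
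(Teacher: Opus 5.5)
Your proof is correct and follows essentially the same route as the paper. It uses the same definition of $g$ as $g_i$ on $(\partial_{\mathbb{S}}\Omega\cap\partial_{\mathbb{S}}\Omega_i)\setminus\bigcup_{j\neq i}\partial_{\mathbb{S}}\Omega_j$, Lemma~\ref{lem:included} to identify $g_i$ with $\gamma_\theta u$ on $\partial_\theta\Omega\cap\partial_\theta\Omega_i$, and the covering and overlap hypotheses to conclude $\mu_\theta$-a.e.\ for every $\theta$. Your explicit transfer of the $\widehat{\mu}_\theta$-null set $\{g_i\neq\widehat{\gamma}_\theta u_i\}$ to a $\mu_\theta$-null set, via its $\lambda^{d-1}$-negligible projection and Corollary~\ref{cor:negl2}, fills in a step the paper leaves implicit; apply Corollary~\ref{cor:negl2} rather than Lemma~\ref{lem:negl} on $\Omega_i$ too, since $g_i$ need not be Borel.
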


\begin{proof}
    Let $u\in H^1(\Omega)$ and let $u_i\in H^1(\Omega_i)$ be the restriction of $u$ to $\Omega_i$ for all $i\in I$. We denote by $g_i$ a representative of ${\rm tr }(u_i)$, the trace of $u_i$ on $\partial\Omega_i$, defined by Definition \ref{def:tracemono}. By Lemma \ref{lem:included}, we have that, for all $\theta\in \mathbb{S}^{d-1}$,  $g_i(z) = \gamma_\theta u_i(z) = \gamma_\theta u(z)$ for $\mu_{\theta}$-a.e. $z\in \partial_\theta\Omega_i\cap\partial_\theta\Omega$. \medskip
 
    Let us define the function $g:\partial\Omega\to\mathbb{R}$ by $g(z) = g_i(z)$ for all $z\in (\partial_{\mathbb{S}}\Omega \cap \partial_{\mathbb{S}}\Omega _i) \setminus \bigcup_{j\neq i} \partial_{\mathbb{S}}\Omega _j $ and all $i\in I$, and $g(z) = 0$ for all other $z\in\partial\Omega$. We then have, for all $\theta\in\mathbb{S}^{d-1}$, $g(z) = g_i(z) = \gamma_\theta u(z)$ for $\mu_\theta$-a.e. $z\in \partial_\theta\Omega_i \cap \partial_\theta\Omega \subset \partial_{\mathbb{S}}\Omega \cap \partial_{\mathbb{S}}\Omega _i $. Since the complement in $\partial_\theta\Omega$ of $\bigcup_{i\in I}(\partial_\theta\Omega_i\cap\partial_\theta\Omega)$ is $\mu_\theta$-negligible, we obtain that, for  all $\theta\in \mathbb{S}^{d-1}$, $g(z) = \gamma_\theta u(z)$ for  $\mu_{\theta}$-a.e. $z\in \partial_{\theta}\Omega$. This implies that the equivalence class of $g$ in $L^2(\partial\Omega,(\mu_\theta)_{\theta\in\mathbb{S}^{d-1}})$ is the trace of $u$ in the sense of Definition \ref{def:tracemono}.
\end{proof}

\section{Conclusion and open problems}
The following inclusion relations 
 \[ H^1_0(\Omega)  \subset H_{\rm tr,0}^1(\Omega)\subset \widetilde{H}^1(\Omega) \subset H^1_{\rm tr}(\Omega)\subset H^1(\Omega) \]
can be strict in situations which are explored in this paper. We do not yet have at this point general conditions on the domain shape leading to the equality of some of these sets. For example, if $\Omega$ is equal to the interior of its closure, additional properties are expected.
The extension of the results of the present paper to the spaces $W^{1,p}(\Omega)$ for $p \in [1, +\infty)$ remains to be studied.
Nevertheless, other questions remain open, for example:
\begin{itemize}
    \item Can we establish the connection between the collection of measures $(\mu_\theta)_{\theta\in\mathbb{S}^{d-1}}$ and the harmonic measures on $\partial\Omega$ in the general case, in the spirit of \cite{dahlberg}?
    
    \item How many values can the directional traces of elements of $H^1(\Omega)$ in all directions $\theta\in\mathbb{S}^{d-1}$ take almost everywhere?
\end{itemize}
These questions justify further investigations.

\appendix

\section{Properties of functions in $H^1(]\alpha, \beta[)$}\label{ap:hund}

\begin{lemma}\label{lem:fhun}
    Let $\alpha<\beta$ be two reals and let $f \in H^1(]\alpha,\beta[) \cap C^0([\alpha,\beta])$ be given. Denoting by $Df$ the weak derivative of $f$, the following properties hold:
    \begin{equation}\label{eq:valbeta}
        f(\beta) = \frac 1 {\beta - \alpha} \int_\alpha^\beta 
            \big(f(t) + (t-\alpha) Df(t)\big){\rm d}t,
    \end{equation}
    
    \begin{equation}\label{eq:majfhun}
        \max(f(\beta)^2, f(\alpha)^2) 
            \le \Vert f\Vert_{H^1(]\alpha,\beta[)}^2 
            \frac{2\max(1, (\beta - \alpha)^2)}{\beta - \alpha},
    \end{equation}
    
    \begin{equation}\label{eq:majsumfhun}
        (f(\beta)+f(\alpha))^2 
            \le \Vert f\Vert_{H^1(]\alpha,\beta[)}^2 
            \frac{4\max(1, (\beta - \alpha)^2)}{\beta - \alpha},
    \end{equation}
    and
    \begin{equation}\label{eq:majdiffhun}
        \Big(\frac{f(\beta) - f(\alpha)} {\beta - \alpha}\Big)^2 
        \le \Vert f\Vert_{H^1(]\alpha,\beta[)}^2\frac{1}{\beta - \alpha}.
    \end{equation}
    
    Moreover, if $f(\beta)=0$, then
    \begin{equation}\label{eq:poinfhun}
        \Vert f\Vert_{L^2(]\alpha,\beta[)} 
        \le (\beta - \alpha) \Vert D f\Vert_{L^2(]\alpha,\beta[)}.
    \end{equation}
\end{lemma}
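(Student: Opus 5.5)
We rely on the fundamental theorem of calculus for the continuous representative $f$: for all $s,t\in[\alpha,\beta]$, $f(t)-f(s)=\int_s^t Df(r)\,{\rm d}r$. We write $L=\beta-\alpha$.

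For \eqref{eq:valbeta}, we integrate by parts on $[\alpha,\beta]$ the product of $f$ with $t\mapsto t-\alpha$, which is legitimate since $f\in H^1$ is absolutely continuous:
\[ \int_\alpha^\beta (t-\alpha)Df(t)\,{\rm d}t = L f(\beta) - \int_\alpha^\beta f(t)\,{\rm d}t. \]
Dividing by $L$ gives \eqref{eq:valbeta}. The symmetric identity, with weight $(\beta-t)$, gives
\[ f(\alpha)=\frac 1L\int_\alpha^\beta \big(f(t)-(\beta-t)Df(t)\big)\,{\rm d}t. \]

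For \eqref{eq:majfhun}, we apply $(a+b)^2\le 2a^2+2b^2$ and then Cauchy--Schwarz to the representation of $f(\beta)$:
\[ f(\beta)^2\le \frac{2}{L^2}\Big(L\Vert f\Vert_{L^2}^2+\Big(\int_\alpha^\beta (t-\alpha)^2{\rm d}t\Big)\Vert Df\Vert_{L^2}^2\Big)=\frac2L\Vert f\Vert_{L^2}^2+\frac{2L}{3}\Vert Df\Vert_{L^2}^2. \]
Both coefficients are bounded by $\frac{2\max(1,L^2)}{L}$, since $L\le \max(1,L^2)$. This gives \eqref{eq:majfhun} for $f(\beta)$, and the same bound holds for $f(\alpha)$.

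For \eqref{eq:majsumfhun}, we use $(f(\beta)+f(\alpha))^2\le 2f(\beta)^2+2f(\alpha)^2$ together with \eqref{eq:majfhun}. For \eqref{eq:majdiffhun}, Cauchy--Schwarz gives
\[ (f(\beta)-f(\alpha))^2=\Big(\int_\alpha^\beta Df\Big)^2\le L\Vert Df\Vert_{L^2}^2. \]
Dividing by $L^2$ and using $\Vert Df\Vert_{L^2}\le\Vert f\Vert_{H^1}$ yields the claim.

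Finally, for the Poincaré inequality \eqref{eq:poinfhun}, assume $f(\beta)=0$. Then $f(t)=-\int_t^\beta Df$, so $f(t)^2\le (\beta-t)\Vert Df\Vert_{L^2}^2$ by Cauchy--Schwarz. Integrating over $t$ gives
\[ \Vert f\Vert_{L^2}^2\le \frac{L^2}{2}\Vert Df\Vert_{L^2}^2\le L^2\Vert Df\Vert_{L^2}^2. \]

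No step is a genuine obstacle. The only point requiring care is the constant bookkeeping in \eqref{eq:majfhun}, namely the check that $\frac{2L}{3}\le \frac{2\max(1,L^2)}{L}$ in both regimes $L\le1$ and $L\ge1$.
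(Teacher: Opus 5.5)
The real gap is in your treatment of \eqref{eq:majsumfhun}, not in \eqref{eq:majfhun}, where you located the only delicate bookkeeping. Combining $(f(\beta)+f(\alpha))^2\le 2f(\beta)^2+2f(\alpha)^2$ with \eqref{eq:majfhun} gives $(f(\beta)+f(\alpha))^2\le \frac{8\max(1,L^2)}{L}\Vert f\Vert_{H^1(]\alpha,\beta[)}^2$, which is twice the stated constant. Even your sharper intermediate bound only yields $\frac 8L\Vert f\Vert_{L^2}^2+\frac{8L}3\Vert Df\Vert_{L^2}^2$, and $\frac 8L>\frac{4\max(1,L^2)}L$ whenever $L<\sqrt2$.

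No cleverer estimate can repair this, because \eqref{eq:majsumfhun} is false for $L\le1$. Constant functions already give equality there. For $\alpha=0$, $\beta=1$, $f(t)=1+(t-\frac12)^2$, one gets $(f(0)+f(1))^2=\frac{25}{4}=\frac{125}{20}$, whereas $4\Vert f\Vert_{H^1(]0,1[)}^2=4\big(1+\frac16+\frac1{80}+\frac13\big)=\frac{121}{20}$. In fact the best constant is $\sup_{f\neq0}(f(\alpha)+f(\beta))^2/\Vert f\Vert_{H^1(]\alpha,\beta[)}^2=2\coth\frac L2$, attained by $f(t)=\cosh(t-\frac{\alpha+\beta}2)$, and it exceeds $\frac 4L$ for every $L>0$.

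The paper's one-line argument has the same defect. It adds \eqref{eq:valbeta} to the analogous formula for $f(\alpha)$ and applies Cauchy--Schwarz. Carried out, this gives $(f(\beta)+f(\alpha))^2\le\frac1L\int_\alpha^\beta\big(2f+(2t-\alpha-\beta)Df\big)^2{\rm d}t\le\frac8L\Vert f\Vert_{L^2}^2+2L\Vert Df\Vert_{L^2}^2$, i.e.\ again the constant $8$.

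So your argument proves the correct version of \eqref{eq:majsumfhun}, with $8\max(1,L^2)$ in place of $4\max(1,L^2)$. You should state it that way rather than claim the printed constant and say that no step is an obstacle. The factor $2$ propagates to the first inequality of Proposition \ref{prop:tracesvisavis}: its proof via \eqref{eq:ineqtracezero} and Lemma \ref{lem:visavis} likewise gives $8\max(1,{\rm diam}(\Omega)^2)$. This is harmless, because Theorem \ref{thm:ipp} only needs integrability.

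Your proofs of \eqref{eq:valbeta}, \eqref{eq:majfhun}, \eqref{eq:majdiffhun} and \eqref{eq:poinfhun} are correct and essentially the paper's. The paper represents $f(s)$ for every $s$ through the kernel $\psi(s,t)$ and bounds $|\psi|\le L$. You use only $s=\alpha,\beta$ and keep the exact weights $\int_\alpha^\beta(t-\alpha)^2{\rm d}t=\frac{L^3}3$ and $\int_\alpha^\beta(\beta-t){\rm d}t=\frac{L^2}2$, which gives slightly better constants. One small slip: $\frac{2L}3\le\frac{2\max(1,L^2)}L$ follows from $L^2\le3\max(1,L^2)$, not from $L\le\max(1,L^2)$.
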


\begin{proof}
    Let $s \in [\alpha, \beta]$. For $t \in [\alpha, \beta]$, let us define
    \[ \psi(s,t) = \begin{cases}
        t-\alpha & \text{ if } t < s, \\
        t-\beta & \text{ if } t > s.
    \end{cases}\]
    Then, using an integration-by-parts formula in $H^1(]\alpha, \beta[)$, we get
    \begin{align*}
        \int_\alpha^\beta \big( f(t) + \psi(s,t) Df(t) \big) {\ \rm d}t
        & = \int_\alpha^\beta f(t) {\ \rm d}t
            + \int_\alpha^s (t-\alpha) Df(t) {\ \rm d}t
            + \int_s^\beta (t-\beta) Df(t) {\ \rm d}t \\
        & = \int_\alpha^\beta f(t) {\ \rm d}t
            + (s-\alpha) f(s) - \int_\alpha^s f(t) {\ \rm d}t
            - (s-\beta) f(s) - \int_s^\beta f(t) {\ \rm d}t \\
        & = (\beta - \alpha)f(s).
    \end{align*}

    Applying this formula to $s = \beta$ yields \eqref{eq:valbeta}. Using the Cauchy-Schwarz inequality, we obtain
    \begin{align*}
        f(s)^2 & = \frac{1}{(\beta-\alpha)^2} \left(\int_\alpha^\beta [f(t) + \psi(s,t) Df(t)] \cdot 1 {\ \rm d}t\right)^2 \\
        & \le \frac{1}{(\beta-\alpha)^2} \left(\int_\alpha^\beta [f(t) + \psi(s,t) Df(t)]^2 {\ \rm d}t\right)  \left(\int_\alpha^\beta {\rm d}t\right) \\
        & \le \frac{2}{\beta-\alpha} \left(\int_\alpha^\beta [f(t)^2 + \psi(s,t)^2 Df(t)^2] {\ \rm d}t\right) \\
        & \le \frac{2}{\beta-\alpha} \left(\int_\alpha^\beta \max(1, (\beta-\alpha)^2)[f(t)^2 + Df(t)^2] {\ \rm d}t\right) \\
        & = \frac{2 \max(1, (\beta-\alpha)^2)}{\beta-\alpha} \Vert f \Vert_{H^1(]\alpha, \beta[)}^2.
    \end{align*}
    Therefore, we get \eqref{eq:majfhun}.  Adding \eqref{eq:valbeta} and
    \[
    f(\alpha) = \frac 1 {\beta - \alpha} \int_\alpha^\beta 
            \big(f(t) + (t-\beta) Df(t)\big){\rm d}t,
            \]
    the  Cauchy-Schwarz inequality leads to \eqref{eq:majsumfhun}. Using once again the Cauchy-Schwarz inequality,
    \begin{align*}
        [f(\beta) - f(\alpha)]^2 = \left[\int_\alpha^\beta Df(t) {\ \rm d}t\right]^2
        & \le \left(\int_\alpha^\beta [Df(t)]^2 {\ \rm d} t\right)\left(\int_\alpha^\beta {\rm d} t\right) \\
        & \le (\beta - \alpha) \Vert f \Vert_{H^1(]\alpha, \beta[)}^2,
    \end{align*}
    hence \eqref{eq:majdiffhun} is proved. Finally, if $f(\beta) = 0$, we have for all $s \in [\alpha,\beta]$,
    \[ f(s) = -\int_s^\beta Df(t) {\ \rm d}t, \]
    which yields
    \[ f(s)^2 \le (\beta - s)\int_s^\beta Df(t)^2{\ \rm d}t \le (\beta - \alpha) \Vert D f\Vert_{L^2(]\alpha,\beta[)}^2. \]
    Integrating the preceding inequality with respect to $s\in[\alpha,\beta]$ provides \eqref{eq:poinfhun}.
\end{proof}

\section{Properties of $\partial_\theta \Omega$, $\Phi_\theta$ and $\mu_\theta$}\label{sec:propdirectionalmeasures}

\begin{lemma}\label{lem:partialthetaboundary}
    Let $z \in \partial_\theta \Omega$. Then  $\omega_\theta(\mathcal{P}_\theta(z))$ is a non-empty open subset of $\mathbb{R}$ and there exists an open interval $]\alpha,\beta[ \ \in \mathcal{I}_{\theta}(\mathcal{P}_\theta(z))$ such that $z = \mathcal{P}_\theta(z) + \beta \theta$. Therefore
    \[ \partial_\theta\Omega := \{z \in \partial\Omega : \exists r>0,\ \forall t\in ]0,r[, \ z-t\theta\in\Omega\} =\{ \beta\theta+y : \ y\in \mathcal{P}_\theta(\Omega), \ ]\alpha,\beta[ \ \in \mathcal{I}_{\theta}(y)\}, \]
    and $\mathcal{P}_\theta(\Omega) = \mathcal{P}_\theta(\partial_\theta\Omega)$.
\end{lemma}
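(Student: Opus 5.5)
The plan is to unwind the definitions $\partial_\theta\Omega=\Phi_\theta(\Omega)$ and $\Phi_\theta(x)=x+\delta_\theta(x)\theta$, and then prove the three claims in order.

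\emph{First claim.} Let $z\in\partial_\theta\Omega$. Choose $x\in\Omega$ with $z=\Phi_\theta(x)$, and put $y=\mathcal{P}_\theta(x)$. Since $z-x$ is parallel to $\theta$, we have $y=\mathcal{P}_\theta(z)$. Write $x=y+s_0\theta$; then $s_0\in\omega_\theta(y)$, so $\omega_\theta(y)$ is non-empty. It is open, being the preimage of the open set $\Omega$ under the continuous map $s\mapsto s\theta+y$. Let $]\alpha,\beta[\in\mathcal{I}_\theta(y)$ be the connected component containing $s_0$; it is bounded because $\Omega$ is bounded. By the definition of $\delta_\theta$ as a supremum, $[s_0,s_0+\delta_\theta(x)[\subset\omega_\theta(y)$, and this interval is connected, so it lies in $]\alpha,\beta[$. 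Hence $s_0+\delta_\theta(x)\le\beta$. Conversely, every $t<\beta-s_0$ satisfies $x+t\theta\in\Omega$, so $\delta_\theta(x)\ge\beta-s_0$. Therefore $z=y+\beta\theta$.

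\emph{Second claim.} I will prove the equality of the three sets by cyclic inclusions.
\begin{itemize}
\item The first claim shows that $\Phi_\theta(\Omega)$ is contained in $\{\beta\theta+y : y\in\mathcal{P}_\theta(\Omega),\ ]\alpha,\beta[\in\mathcal{I}_\theta(y)\}$.
\item Take an element $\beta\theta+y$ of that set. Since $\beta\notin\omega_\theta(y)$, the point $\beta\theta+y$ is not in $\Omega$. It is a limit of the points $s\theta+y\in\Omega$ as $s\to\beta^-$, so it lies in $\overline\Omega\setminus\Omega=\partial\Omega$. Taking $r=\beta-\alpha$, we get $z-t\theta\in\Omega$ for all $t\in]0,r[$. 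So the set lies in $\{z\in\partial\Omega:\exists r>0,\ \forall t\in]0,r[,\ z-t\theta\in\Omega\}$.
\item Take $z$ in this last set and let $x=z-\tfrac r2\theta\in\Omega$. The points $x+t\theta$ for $t\in[0,r/2[$ lie in $\Omega$, while $x+\tfrac r2\theta=z\notin\Omega$. Hence $\delta_\theta(x)=r/2$ and $\Phi_\theta(x)=z$, so the set is contained in $\Phi_\theta(\Omega)$.
\end{itemize}
This closes the cycle.

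\emph{Third claim.} Projection gives $\mathcal{P}_\theta(\Phi_\theta(x))=\mathcal{P}_\theta(x)$. Thus $\mathcal{P}_\theta(\partial_\theta\Omega)=\mathcal{P}_\theta(\Phi_\theta(\Omega))=\mathcal{P}_\theta(\Omega)$.

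The only delicate point is the careful handling of the supremum in $\delta_\theta$. One must show that $\delta_\theta(x)$ is finite and attained exactly at the endpoint $\beta$ of the component, rather than at some interior point. Boundedness of $\Omega$ together with openness of $\omega_\theta(y)$ gives this directly, so I expect no real obstacle.
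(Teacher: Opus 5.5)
Your proof is correct and takes the same route as the paper's: choose $x\in\Omega$, take the component $]\alpha,\beta[$ of $\omega_\theta(y)$ containing the coordinate of $x$, identify $\Phi_\theta(x)=y+\beta\theta$, and use $r=\beta-\alpha$ for the alternative characterization. The paper's proof compresses this into a few lines. You additionally supply details it leaves implicit: the supremum computation showing $\delta_\theta(x)=\beta-s_0$, and the reverse inclusions, via the endpoint argument and via $x=z-\tfrac r2\theta$.
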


\begin{proof}
    Let $x\in\Omega$. Then, letting $y = \mathcal{P}_\theta(x)$, there exist $]\alpha,\beta[\ \in \mathcal{I}_{\theta}(y)$ and $s \in \ ]\alpha,\beta[ \ \subset \omega_\theta(y)$ such that $x = s\theta+y$. Then $z := \Phi_\theta(x) = \beta\theta+y \in \partial_\theta\Omega$ and $\mathcal{P}_\theta(z) = y$.
    We then notice that, letting $r=\beta-\alpha$, we have $x = z-t\theta$ with $t = \beta-s\in \ ]0,r[$.
\end{proof}

\begin{lemma}\label{lem:closomegatilde}
    The closure and the boundary of the set $\partial_{\mathbb{S}}\Omega $ defined in \eqref{eq:defdomegatilde} are equal to $\partial\Omega$.
\end{lemma}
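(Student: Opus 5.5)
The proof consists of two inclusions. Since $\partial_{\mathbb{S}}\Omega\subset\partial\Omega$ and $\partial\Omega$ is closed, $\overline{\partial_{\mathbb{S}}\Omega}\subset\partial\Omega$. Likewise the boundary of $\partial_{\mathbb{S}}\Omega$ is contained in its closure, hence in $\partial\Omega$. It therefore suffices to prove two things: first, that every $z\in\partial\Omega$ is a limit of points of $\partial_{\mathbb{S}}\Omega$; second, that $\partial_{\mathbb{S}}\Omega$ has empty interior in $\mathbb{R}^d$. Together these give that both its closure and its boundary (closure minus interior) equal $\partial\Omega$.

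For density, take $z\in\partial\Omega$ and $r>0$. Since $z\in\overline{\Omega}$, there is a point $x\in\Omega\cap B(z,r)$ with $x\neq z$; note that $z\notin\Omega$. Set $\theta=(z-x)/|z-x|\in\mathbb{S}^{d-1}$ and consider the segment $\{x+t\theta,\ t\in[0,|z-x|]\}$. Its endpoint $x+|z-x|\theta=z$ lies outside $\Omega$, so $\delta_\theta(x)\le|z-x|$. Hence $\Phi_\theta(x)=x+\delta_\theta(x)\theta$ lies on the segment $[x,z]\subset B(z,r)$ by convexity of the ball. By definition, $\Phi_\theta(x)\in\partial_\theta\Omega\subset\partial_{\mathbb{S}}\Omega$. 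Since $r$ is arbitrary, $z\in\overline{\partial_{\mathbb{S}}\Omega}$, and so $\overline{\partial_{\mathbb{S}}\Omega}=\partial\Omega$.

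For empty interior, it is enough to show that $\partial\Omega$ itself has empty interior in $\mathbb{R}^d$. Suppose instead that $B(z,r)\subset\partial\Omega$ for some $z$ and $r>0$. Then $z\in\overline{\Omega}$ gives a point of $\Omega$ inside $B(z,r)$, since $\Omega$ is open and meets every neighbourhood of $z$. That point would lie in $\partial\Omega=\overline{\Omega}\setminus\Omega$, which is impossible. So the interior of $\partial_{\mathbb{S}}\Omega$ is contained in the empty interior of $\partial\Omega$. The boundary of $\partial_{\mathbb{S}}\Omega$ is therefore its closure, namely $\partial\Omega$.

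Neither step is a real obstacle. The one point that needs care is checking in the density step that $\Phi_\theta(x)$ is well defined and belongs to $\partial\Omega$. This holds because $\delta_\theta(x)$ is finite (it is bounded by $|z-x|$, and also $\Omega$ is bounded). Moreover, $x+\delta_\theta(x)\theta$ is a limit of points of $\Omega$ but is not in $\Omega$: if it were, openness of $\Omega$ would contradict the maximality in the definition of $\delta_\theta(x)$. This is exactly the fact used when the paper asserts $\Phi_\theta(x)\in\partial\Omega$.
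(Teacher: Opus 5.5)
Your proof is correct and follows the paper's own argument. You take an interior point $x$ near $z$, set $\theta=(z-x)/|z-x|$, observe that $\delta_\theta(x)\le|z-x|$ so that $\Phi_\theta(x)$ lies on the segment $[x,z]$, and then use that $\partial\Omega$ has empty interior to identify the boundary with the closure; you also spell out the small facts the paper leaves implicit (that $\partial\Omega$ has empty interior, and that $\Phi_\theta(x)\in\partial\Omega$).
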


\begin{proof}
    Let $z \in \partial\Omega$ and $(x_n)_{n \ge 0} \subset \Omega$ such that $x_n \to z$. For all $n \in \mathbb N$, denote
    \[ \theta_n = \frac{z-x_n}{|z-x_n|} \in \mathbb{S}^{d-1}, \qquad
        z_n = \Phi_{\theta_n}(x_n) = x_n + \delta_{\theta_n}(x_n) \frac{z-x_n}{|z-x_n|} \in \partial_{\theta_n}\Omega \subset \partial_{\mathbb{S}}\Omega . \]
    Then, $0 < \delta_{\theta_n}(x_n) = |z_n-x_n| \le |z-x_n|$ (because $z$ is in the half-line with initial point $x_n$ along the direction $\theta_n$, and $z \not\in \Omega$). Hence,
    \[ |z-z_n|
        = \left|z - x_n - \delta_{\theta_n}(x_n) \frac{z-x_n}{|z-x_n|}\right|
        = |z-x_n| \left|1 - \frac{\delta_{\theta_n}(x_n)}{|z-x_n|}\right|
        \le |z-x_n|. \]
    Therefore we get that $z_n \to z$, so $z$ is in the closure of $\partial_{\mathbb{S}}\Omega $. Since the interior of $\partial \Omega$ is empty, the boundary of $\partial_{\mathbb{S}}\Omega $ equals its closure, that is $\partial \Omega$.
\end{proof}

\begin{lemma}\label{lem:measr}
    For all $\theta\in \mathbb{S}^{d-1}$, the function $\delta_{\theta} : \Omega\to{\mathbb{R}}$  is lower semicontinuous (that is the set $\{ x \in \Omega~\text{such that}~\delta_\theta(x) >
\lambda \}$ is open in $\Omega$ for any $\lambda  \in \mathbb{R}$) and therefore measurable.
\end{lemma}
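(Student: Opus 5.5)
The claim is that for fixed $\theta$ the superlevel sets of $\delta_\theta$ are open; measurability then follows, since a lower semicontinuous function is Borel. So fix $\lambda\in\mathbb{R}$ and $x\in\Omega$ with $\delta_\theta(x)>\lambda$. I will produce a ball around $x$ on which $\delta_\theta>\lambda$.

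If $\lambda<0$, the superlevel set is all of $\Omega$, because $\delta_\theta\ge 0$ everywhere; indeed $\delta_\theta>0$ since $\Omega$ is open. So assume $\lambda\ge 0$ and choose $s$ with $\lambda<s<\delta_\theta(x)$. By definition of $\delta_\theta(x)$ as a supremum, the closed segment $S=\{x+t\theta : t\in[0,s]\}$ is contained in $\Omega$.

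The key step is a compactness argument. The segment $S$ is compact and $\Omega$ is open, so $r={\rm dist}(S,\mathbb{R}^d\setminus\Omega)>0$; if $\Omega=\mathbb{R}^d$ this is impossible, since $\Omega$ is bounded.

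Now take $x'\in\Omega$ with $|x'-x|<r$. For every $t\in[0,s]$ we have $|(x'+t\theta)-(x+t\theta)|<r$, hence $x'+t\theta\in\Omega$. Therefore $\delta_\theta(x')\ge s>\lambda$, which shows that the ball $B(x,r)$ lies in the superlevel set, so that set is open.

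For measurability, the sets $\{\delta_\theta>\lambda\}$ for $\lambda\in\mathbb{R}$ are open, hence Borel, and they generate the Borel $\sigma$-algebra pulled back by $\delta_\theta$. So $\delta_\theta$ is Borel measurable. Boundedness of $\Omega$ also guarantees that $\delta_\theta$ takes finite values, at most ${\rm diam}(\Omega)$, so $\delta_\theta$ is indeed a map into $\mathbb{R}$.

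I expect no real obstacle here. The only point needing care is to pick $s$ strictly below the supremum $\delta_\theta(x)$, so that the closed segment $S$, and not merely the half-open one, lies in $\Omega$ and the positive-distance argument applies.
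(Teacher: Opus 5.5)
Your proof is correct. It is the direct form of the same compactness idea the paper uses, which the paper runs by contradiction: it takes $x_n\to x$ with $\delta_\theta(x_n)\le\lambda$, uses that the exit points $x_n+\delta_\theta(x_n)\theta$ lie in $\Omega^c$, and extracts a subsequence with $\delta_\theta(x_n)\to t\le\lambda$. Closedness of $\Omega^c$ then gives $x+t\theta\notin\Omega$ with $t<\delta_\theta(x)$, a contradiction. You instead use the positive distance between the compact segment $[x,x+s\theta]$ and $\Omega^c$. This gives an explicit radius and does not need the fact that the exit point $x_n+\delta_\theta(x_n)\theta$ lies outside $\Omega$, which the paper uses without justification.
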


\begin{proof}
    Let $x \in \Omega$ and $\lambda < \delta_\theta(x)$. Assume that for all $n \in \mathbb N$ such that $B(x, \frac{1}{n}) \subset \Omega$, there exists $x_n \in B(x, \frac{1}{n})$ satisfying $\delta_\theta(x_n)\in]0,\lambda]$. We have $x_n + \delta_\theta(x_n) \theta \not\in \Omega$. Up to a subsequence, we can assume that $(\delta_\theta(x_n))_n$ converges to some $t \in [0, \lambda]$. Moreover, $x_n \to x$ and $\Omega^c$ is closed, so $x + t\theta \not\in \Omega$. Thus, $t \le \lambda < \delta_\theta(x)$, a contradiction.
\end{proof}

\begin{remark}
    The preceding proof could be extended to prove that the function defined on $\mathbb{S}^{d-1} \times \Omega$ by $(\theta,x)\mapsto \delta_{\theta}(x)$ is lower semicontinuous.
\end{remark}

\begin{lemma}\label{lem:dthetaomborel}
 Let $\theta \in \mathbb{S}^{d-1}$. Then the set $\partial_\theta \Omega$ is an element of $\mathcal{B}(\partial \Omega)$.
\end{lemma}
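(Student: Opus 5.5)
We show that $\partial_\theta\Omega$ is a countable union of closed sets intersected with an open set, so in particular a Borel set, using the characterization of Lemma \ref{lem:partialthetaboundary}:
\[ \partial_\theta\Omega = \{z\in\partial\Omega : \exists r>0,\ \forall t\in\, ]0,r[,\ z-t\theta\in\Omega\}. \]
For $k\in\mathbb{N}^\star$ we set
\[ F_k = \{z\in\partial\Omega : \forall t\in\, ]0,1/k[,\ z-t\theta\in\Omega\}, \]
so that $\partial_\theta\Omega=\bigcup_{k\ge 1}F_k$. It then suffices to prove that each $F_k$ is Borel.

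The naive attempt would be to show that $F_k$ is closed, but this fails: the condition $z-t\theta\in\Omega$ is open in $z$, and an intersection over uncountably many $t$ of open conditions need not be open or closed. We therefore reduce to countably many $t$, and this reduction is the main obstacle.

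Let $q\in\mathbb{Q}\cap\,]0,1/k[$. If $z-q\theta\in\Omega$, then some ball $B(z-q\theta,\eta)$ lies in $\Omega$. Requiring this only at rational $q$ does not control irrational $t$, since the complement of $\Omega$ could meet the open segment at irrational points only. We handle this with a compactness formulation. For $z\in\partial\Omega$ and $0<a<b$, the closed segment $S_{a,b}(z)=\{z-t\theta: t\in[a,b]\}$ is contained in $\Omega$ if and only if $\mathrm{dist}(S_{a,b}(z),\mathbb{R}^d\setminus\Omega)>0$. The map
\[ z\mapsto \mathrm{dist}(S_{a,b}(z),\mathbb{R}^d\setminus\Omega)=\min_{t\in[a,b]}\mathrm{dist}(z-t\theta,\mathbb{R}^d\setminus\Omega) \]
is continuous, since it is a minimum over a compact parameter set of functions that are $1$-Lipschitz in $z$. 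Hence $G_{a,b}=\{z\in\partial\Omega: S_{a,b}(z)\subset\Omega\}$ is relatively open in $\partial\Omega$, and in particular Borel.

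Because $]0,1/k[\,=\bigcup_{j}[1/(kj),\,1/k-1/(kj)]$, an increasing union of compact intervals, we obtain
\[ F_k=\bigcap_{j\ge 3} G_{1/(kj),\,1/k-1/(kj)}, \]
a countable intersection of Borel sets. Consequently
\[ \partial_\theta\Omega=\bigcup_{k\ge1}\bigcap_{j\ge3} G_{1/(kj),1/k-1/(kj)}, \]
which lies in $\mathcal{B}(\partial\Omega)$.

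The continuity of the distance-to-segment map is the one point that needs a careful check. It follows because for each fixed $t$ the map $z\mapsto\mathrm{dist}(z-t\theta,\mathbb{R}^d\setminus\Omega)$ is $1$-Lipschitz, and a minimum of a family of uniformly Lipschitz functions is again Lipschitz with the same constant.
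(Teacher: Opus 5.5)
Your proof is correct and follows essentially the paper's argument. The paper also writes $\partial_\theta\Omega=\partial\Omega\cap\bigcup_{q}\bigcap_{n}\bigcap_{t\in[1/n,\,q-1/n]}(\Omega+t\theta)$, with rational $q>0$ in place of your $1/k$, and shows that each intersection over a compact parameter interval is open via the continuity of $z\mapsto\min_{t\in[a,b]}\mathrm{dist}(z-t\theta,\Omega^c)$. One small correction: your opening sentence misdescribes what you prove. You actually show that $\partial_\theta\Omega$ is a countable union of countable intersections of relatively open subsets of $\partial\Omega$, which is still Borel.
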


\begin{proof}
    Define 
    \[ A = \bigcup_{ r \in ]0,+\infty[~} \bigcap_{~t \in ]0,r[} (\Omega + t \theta). \]
    Using Lemma \ref{lem:partialthetaboundary} we have $\partial_\theta \Omega = A \cap \partial \Omega$. Let us prove that $A$ is an element of $\mathcal{B}(\mathbb{R}^d)$. We remark that, since $\mathbb{Q}$ is dense in $\mathbb{R}$, we may write
    $$
    A = \bigcup_{q\in\mathbb{Q},q> 0~} \bigcap_{~t \in ]0,q[} (\Omega + t \theta )
    $$
    Let us prove that, for any $q\in\mathbb{Q},q> 0$, the set $ \bigcap_{t \in ]0,q[} (\Omega + t \theta )$ is an element of $\mathcal{B}(\mathbb{R}^d)$.
    We have 
    $$
    \bigcap_{t \in ]0,q[} (\Omega + t \theta ) = \bigcap_{n\in\mathbb{N},n \ge \frac 2 q~} \bigcap_{~ t \in [ \frac{1}{n},q - \frac{1}{n}] } ( \Omega  + t \theta).
    $$
    Let us show that for any $(a,b) \in \mathbb{R}^2$ such that $a<b$ the set $\bigcap_{t \in [a,b]} (\Omega +t \theta)$ is open. We have
    $$
    \bigcap_{t \in [a,b]} (\Omega +t \theta) = \{ z \in \mathbb{R}^d : \forall t \in [a,b], \ \text{dist}(z-t\theta,\Omega^c) > 0\}.
    $$
    Using the fact that $t \to \text{dist}(z-t\theta,\Omega^c)$ is continuous on $[a,b]$, we obtain
    $$
    \bigcap_{t \in [a,b]} (\Omega +t \theta) = \left\{ z \in \mathbb{R}^d : \min_{t\in [a,b]} \text{dist}(z-t\theta,\Omega^c) > 0\right\}.
    $$
    Using the fact that $z \to \min_{t\in [a,b]} \text{dist}(z-t\theta,\Omega^c)$ is continuous, we obtain that $\bigcap_{t \in [a,b]} (\Omega +t \theta)$ is an open subset  of $\mathbb{R}^d$. This implies that $ \bigcap_{t \in ]0,q[} (\Omega + t \theta )$ is an element of $\mathcal{B}(\mathbb{R}^d)$ and concludes the proof.
\end{proof}

\begin{lemma}\label{lem:negl}
    Let $\theta \in \mathbb{S}^{d-1}$. Let $A$ be a non-empty element of $ {\mathcal B}(\partial_\theta\Omega)$, where ${\mathcal B}(\partial_\theta\Omega)$ is the family of all Borel sets of $\partial_\theta\Omega\subset \partial\Omega$. Then $\mu_\theta(A)>0$ if and only if $\lambda^{d-1}(\mathcal{P}_\theta(A))>0$.
\end{lemma}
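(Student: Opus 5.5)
The plan is to compute $\mu_\theta(A)$ directly from Definition \ref{def:dirmeasurebound} by Fubini in coordinates $x=y+s\theta$, $y\in H_\theta$, $s\in\omega_\theta(y)$, exactly as in the proof of Proposition \ref{prop:tracedirectionnelle}. This gives
\[ \mu_\theta(A) = \int_\Omega \chi_A(\Phi_\theta(x))\,{\rm d}x = \int_{H_\theta} \sum_{]\alpha,\beta[\in\mathcal{I}_\theta(y)} (\beta-\alpha)\,\chi_A(y+\beta\theta)\,{\rm d}y . \]
The measurability of $x\mapsto\chi_A(\Phi_\theta(x))$ comes from Lemma \ref{lem:measr}, since $\Phi_\theta(x)=x+\delta_\theta(x)\theta$ is Borel and $A$ is Borel. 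We use Lemma \ref{lem:partialthetaboundary} to identify $\Phi_\theta(y+s\theta)=y+\beta\theta$ for $s\in]\alpha,\beta[$.

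Set $N(y)=\sum_{]\alpha,\beta[\in\mathcal{I}_\theta(y)}(\beta-\alpha)\chi_A(y+\beta\theta)\ge 0$. By Lemma \ref{lem:partialthetaboundary}, $A\subset\partial_\theta\Omega$ means every $z\in A$ has the form $y+\beta\theta$ with $]\alpha,\beta[\in\mathcal{I}_\theta(y)$ and $y=\mathcal{P}_\theta(z)$. Since each term has $\beta-\alpha>0$, we get $N(y)>0$ if and only if $y\in\mathcal{P}_\theta(A)$. Hence $\mu_\theta(A)=\int_{H_\theta}N\,{\rm d}\lambda^{d-1}$ with $\{N>0\}=\mathcal{P}_\theta(A)$. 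The integral of a nonnegative measurable function is positive exactly when its positivity set has positive measure, which gives the equivalence.

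The main obstacle is measurability. The set $\mathcal{P}_\theta(A)$ is a projection of a Borel set, hence a priori only analytic and $\lambda^{d-1}$-measurable, so the statement "$\lambda^{d-1}(\mathcal{P}_\theta(A))>0$" must be read for the Lebesgue (completed) measure. I would handle this by noting that Fubini–Tonelli already makes $y\mapsto N(y)$ $\lambda^{d-1}$-measurable, as the partial integral $\int\chi_A(\Phi_\theta(y+s\theta))\,{\rm d}s$ of a Borel function. Therefore $\{N>0\}=\mathcal{P}_\theta(A)$ is Lebesgue measurable, and nothing from descriptive set theory is needed.

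A secondary check is that the countable sum over $\mathcal{I}_\theta(y)$ is legitimate. Tonelli on each line gives $\int_{\omega_\theta(y)}\chi_A(\Phi_\theta(y+s\theta))\,{\rm d}s=\sum(\beta-\alpha)\chi_A(y+\beta\theta)$ by countable additivity over the disjoint intervals. The same computation with $N$ also yields the negligibility statements used earlier (Corollary \ref{cor:negl2}) as the contrapositive.
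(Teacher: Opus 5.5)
Your proof is correct and follows the paper's argument. Both use Fubini in the coordinates $x=y+s\theta$ and show that the inner integral vanishes for $y\notin\mathcal{P}_\theta(A)$ and is positive for $y\in\mathcal{P}_\theta(A)$, where you compute this inner integral exactly as $\sum(\beta-\alpha)\chi_A(y+\beta\theta)$ and the paper only bounds it below by the length of one interval. You handle measurability more cleanly than the paper: it cites the Lebesgue measurability of projections of Borel sets (Cohn), whereas you note that $\mathcal{P}_\theta(A)=\{N>0\}$ with $N$ a Tonelli section integral of a Borel function, so $\mathcal{P}_\theta(A)$ is measurable, in fact Borel, without any descriptive set theory.
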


\begin{proof}
    Let $A$ be a non-empty element of ${\mathcal B}(\partial_\theta\Omega)$. Since Lemma \ref{lem:dthetaomborel} proves that $\partial_\theta\Omega\in \mathcal{B}(\partial \Omega)$, we get that $\chi_A\circ \Phi_\theta$ is measurable.
    Recall that $\Omega = \{y + s\theta : y \in H_\theta, s \in \omega_\theta(y)\}$. Hence,
    \[ \mu_\theta(A) = \int_\Omega \chi_A(\Phi_\theta(x)) {\ \rm d}x
        = \int_{H_\theta} \left(\int_{\omega_\theta(y)} \chi_A(\Phi_\theta(y+s\theta)) {\ \rm d}s \right) {\rm d}y. \]

    Using  the fact that $H_\theta = \mathcal{P}_\theta(A) \cup H_\theta \setminus \mathcal{P}_\theta(A)$ we obtain
    \begin{multline*} \mu_\theta(A) 
        = \int_{H_\theta} \left(\int_{\omega_\theta(y)} \chi_A(\Phi_\theta(y+s\theta)) {\ \rm d}s \right) {\rm d}y \\
       =  \int_{H_\theta \setminus \mathcal{P}_\theta(A)} \left(\int_{\omega_\theta(y)} \chi_A(\Phi_\theta(y+s\theta)) {\ \rm d}s \right) {\rm d}y + \int_{\mathcal{P}_\theta(A)} \left(\int_{\omega_\theta(y)} \chi_A(\Phi_\theta(y+s\theta)) {\ \rm d}s \right) {\rm d}y.
        \end{multline*}
    Note that $\mathcal{P}_\theta(A)$ is not necessarily an element of the Borel algebra. Nevertheless, it is possible to prove that $\mathcal{P}_\theta(A)$ is an element of the Lebesgue algebra (see \cite{Cohn2013measure}) and the above integrations are done with respect to the completed measure.
    We consider two different cases: 
    \begin{itemize}
        \item Assume that $y \in H_\theta \setminus \mathcal P_\theta(A)$ satisfies $\omega_\theta(y) \neq \varnothing$. Let $s \in \omega_\theta(y)$. Then
        \[ \Phi_\theta(y+s\theta) = y+s\theta+\delta_\theta(y+s\theta)\theta
            \implies \mathcal P_\theta(\Phi_\theta(y+s\theta)) = y \in H_\theta \setminus \mathcal P_\theta(A). \]
        Thus, $\Phi_\theta(y+s\theta) \not\in A$, so $\chi_A(\Phi_\theta(y+s\theta)) = 0$, and
        \[ \int_\Omega \chi_A(\Phi_\theta(x)) {\ \rm d}x
        = \int_{\mathcal P_\theta(A)} \left(\int_{\omega_\theta(y)} \chi_A(\Phi_\theta(y+s\theta)) {\ \rm d}s\right) {\rm d}y. \medskip\]

        \item If $y \in \mathcal P_\theta(A)$, then there exists $z = z(y) \in A$ such that $y = z - \langle z, \theta \rangle \theta$, so $z = y + \langle z, \theta\rangle \theta$. Since $A \subset \partial_\theta \Omega$, there exists $r > 0$ such that for all $t \in \ ]0, r[$, $z - t\theta \in \Omega$. Hence, if we set $s = \langle z, \theta \rangle - t$, we obtain
        \[ s \in ]\langle z, \theta\rangle - r, \langle z, \theta\rangle[
            \implies \Phi_\theta(y+s\theta) = \Phi_\theta(z-t\theta) = z. \]
        We deduce that $J = \ ]\langle z,\theta \rangle - r, \langle z, \theta \rangle[ \ \subset \omega_\theta(y)$, and
        \[ \int_{\omega_\theta(y)} \chi_A(\Phi_\theta(y+s\theta)) {\ \rm d}s
            \ge \int_J \chi_A(\Phi_\theta(y+s\theta)) {\ \rm d}s
            = \int_J \chi_A(z) {\ \rm d}s
            = \int_J {\rm d}s = \lambda(J) = r > 0. \]
    \end{itemize}
    Finally,
    \[
\mu_\theta(A) =  \int_{\mathcal{P}_\theta(A)} \left( \int_{\omega_\theta(y)} \chi_{A}( \Phi_\theta(s\theta+y)) {\ \rm d}s \right) {\rm d}y, \ \text{with} \ \int_{\omega_\theta(y)} \chi_{A}(\Phi_\theta(s\theta+y)) {\ \rm d}s > 0~\text{for all}~y \in \mathcal{P}_\theta(A). \]    
    so $\mu_\theta(A) = 0 \iff \lambda^{d-1}(\mathcal P_\theta(A)) = 0$, and this concludes the proof.
\end{proof}

We have the following corollary.

\begin{corollary}\label{cor:negl2}
    We have the following properties.
    \begin{enumerate}
        \item The measurable set $\partial\Omega\setminus \partial_\theta\Omega$ is $\mu_\theta$-negligible, which means that $\mu_\theta(\partial\Omega \setminus \partial_\theta\Omega)=0$.
        \item Let $A \subset \partial \Omega$ be non-empty. Then $A$ is $\mu_\theta$-negligible if and only if $\mathcal{P}_\theta (A \cap \partial_\theta \Omega)$ is $\lambda^{d-1}$-negligible.
    \end{enumerate}
\end{corollary}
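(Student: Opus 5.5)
Both items follow from Lemma \ref{lem:negl}, applied to suitable Borel subsets of $\partial_\theta\Omega$. Before applying it, we record that $\partial_\theta\Omega\in\mathcal{B}(\partial\Omega)$ by Lemma \ref{lem:dthetaomborel}. Hence $\partial\Omega\setminus\partial_\theta\Omega$ is measurable, and for any Borel $A\subset\partial\Omega$ the set $A\cap\partial_\theta\Omega$ belongs to $\mathcal{B}(\partial_\theta\Omega)$.

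For item 1, we argue directly from Definition \ref{def:dirmeasurebound}. We have
\[
\mu_\theta(\partial\Omega\setminus\partial_\theta\Omega)=\int_\Omega \chi_{\partial\Omega\setminus\partial_\theta\Omega}(\Phi_\theta(x))\,{\rm d}x .
\]
Since $\Phi_\theta(\Omega)=\partial_\theta\Omega$ by definition, the integrand vanishes identically, so the measure is $0$.

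For item 2, we take $A$ Borel; for general $A$, "$\mu_\theta$-negligible" is read as "contained in a $\mu_\theta$-null Borel set", or via the completion. We split $A=(A\cap\partial_\theta\Omega)\cup(A\setminus\partial_\theta\Omega)$. By item 1 the second piece is $\mu_\theta$-null, so $\mu_\theta(A)=\mu_\theta(A\cap\partial_\theta\Omega)$.

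If $A\cap\partial_\theta\Omega=\varnothing$, both sides of the equivalence hold trivially: the measure is $0$ and the projection of the empty set is empty. Otherwise Lemma \ref{lem:negl} applies to the non-empty Borel set $A\cap\partial_\theta\Omega$. It gives $\mu_\theta(A\cap\partial_\theta\Omega)=0$ if and only if $\lambda^{d-1}(\mathcal{P}_\theta(A\cap\partial_\theta\Omega))=0$. Here we use, as in that lemma's proof, that the projection is Lebesgue measurable.

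The only delicate point is measurability for non-Borel $A$, as in the uses in Theorem \ref{thm:deftraceoned} and Lemma \ref{lem:included}. There, the sets where two functions differ are Borel once we take Borel representatives, so restricting to Borel $A$ suffices. Alternatively, for general $A$, the formula $\mu_\theta(A)=\int_{\mathcal{P}_\theta(A)}(\dots)$ with positive inner integral, from the proof of Lemma \ref{lem:negl}, extends to the completed measures, and the same argument goes through.
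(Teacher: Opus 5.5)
Your item 1 and your treatment of Borel $A$ in item 2 match the paper's argument. Item 1 uses $\Phi_\theta(\Omega)=\partial_\theta\Omega$. For Borel $A$, item 2 applies Lemma \ref{lem:negl} to $A\cap\partial_\theta\Omega$ and uses item 1 for the rest.

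The gap is non-Borel $A$. The statement explicitly covers this case, and it is the whole content of the paper's proof of item 2.

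Your first reason for dropping it is wrong. In Definition \ref{def:dirtrace} a directional trace is an arbitrary function $g:\partial\Omega\to\mathbb{R}$, with no measurability assumed. So in Step 1 of Theorem \ref{thm:deftraceoned} and in Lemma \ref{lem:included}, the set $\{g\neq \widetilde{g}\}\cap\partial_\theta\Omega$ is not known to be Borel. You cannot ``take Borel representatives'' there, because Theorem \ref{thm:deftraceoned} is exactly what proves that every directional trace is $\mu_\theta$-a.e. equal to a Borel one. All that is known is that the projection of this set lies in the null set $\mathcal{P}_\theta(\Omega)\setminus(A\cap\widetilde{A})$. So the reverse implication of item 2 is needed for an arbitrary set.

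Your second reason does not close the gap either. For a set outside the $\mu_\theta$-completion, the formula $\mu_\theta(A)=\int_{\mathcal{P}_\theta(A)}(\cdots)$ has no meaning. In the reverse direction, the $\mu_\theta$-measurability of $A$ is part of what must be proved, so invoking the completed formula is circular.

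The missing idea is the right choice of Borel superset.
\begin{itemize}
\item Forward direction: take a Borel $N\supset A$ with $\mu_\theta(N)=0$. Apply your Borel case to $N$ and use $\mathcal{P}_\theta(A\cap\partial_\theta\Omega)\subset\mathcal{P}_\theta(N\cap\partial_\theta\Omega)$.
\item Reverse direction: take a Borel $N\subset H_\theta$ with $\lambda^{d-1}(N)=0$ containing $\mathcal{P}_\theta(A\cap\partial_\theta\Omega)$, and use the cylinder $B=\mathcal{P}_\theta^{-1}(N)\cap\partial_\theta\Omega$.
\begin{itemize}
\item $B$ is Borel, by continuity of $\mathcal{P}_\theta$ and Lemma \ref{lem:dthetaomborel}.
\item $B$ contains $A\cap\partial_\theta\Omega$, and $\mathcal{P}_\theta(B)\subset N$.
\item Lemma \ref{lem:negl} then gives $\mu_\theta(B)=0$, trivially if $B=\varnothing$.
\end{itemize}
Item 1 then handles $A\setminus\partial_\theta\Omega$.
\end{itemize}

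An arbitrary Borel superset of $A\cap\partial_\theta\Omega$ would not work, since its projection may have positive $\lambda^{d-1}$-measure. It is the cylindrical shape of $B$ that keeps its projection inside $N$. This is precisely the paper's proof.
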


\begin{proof} \;
    \begin{enumerate}
        \item We have $\partial\Omega \setminus \partial_\theta\Omega = \{z \in \partial \Omega : \forall r > 0, \ \exists t\in (0,r), \ z-t\theta \notin \Omega\}$. Therefore, by definition of $\Phi_\theta$, the set $\Phi_\theta(\Omega) \cap ( \partial\Omega\setminus \partial_\theta\Omega) $ is empty. This implies that $\mu_\theta(\partial\Omega\setminus\partial_\theta\Omega)=0$.

        \item If $A$ is $\mu_\theta$-negligible, then there exists $N \in \mathcal{B}(\partial \Omega)$ such that $A\subset N$ and $\mu_\theta(N)=0$. Then $A \cap \partial_\theta \Omega$ is a subset of $N \cap \partial_\theta \Omega \in {\mathcal B}(\partial_\theta\Omega)$ and $\mu_\theta(N \cap \partial_\theta \Omega) = 0$. Hence, $\mathcal{P}_\theta (A \cap \partial_\theta \Omega) \subset \mathcal{P}_\theta (N \cap \partial_\theta \Omega)$ and $\lambda^{d-1} (\mathcal{P}_\theta (N \cap \partial_\theta \Omega)) = 0$. Applying Lemma \ref{lem:negl}, $\mathcal{P}_\theta (A \cap \partial_\theta \Omega)$ is $\lambda^{d-1}$-negligible. \medskip

        Conversely, if $\mathcal{P}_\theta (A \cap \partial_\theta \Omega)$ is $\lambda^{d-1}$-negligible, then there exists $N \in \mathcal{B}(H_\theta) $ such that $\mathcal{P}_\theta (A \cap \partial_\theta \Omega)$ is a subset of $N$ and $\lambda^{d-1}(N)=0$. Hence, we obtain $A \cap \partial_\theta \Omega \subset \mathcal{P}_\theta^{-1}(N) \cap \partial_\theta \Omega \in \mathcal{B}(\partial_\theta \Omega)$. Moreover, $\mathcal{P}_\theta (\mathcal{P}_\theta^{-1}(N) \cap \partial_\theta \Omega) \subset N$, so applying Lemma \ref{lem:negl}, we obtain that $\mu_\theta(\mathcal{P}_\theta^{-1}(N) \cap \partial_\theta \Omega) = 0$. Finally, $A \cap \partial_\theta \Omega$ is $\mu_\theta$-negligible. To conclude, we need to note that 
        \[ A = (A \cap (\partial\Omega \setminus \partial_\theta\Omega)) \cup (A \cap \partial_\theta \Omega), \]
        the first set being also $\mu_\theta$-negligible by the first item. \qedhere
    \end{enumerate}
\end{proof}

\begin{lemma}\label{lem:visavis}
    Let $\theta \in \mathbb{S}^{d-1}$, $z \in \partial_\theta\Omega$, $y = \mathcal P_\theta(z) = \mathcal P_{\minus\theta}(z)$ and $]\alpha, \beta[ \ \in \mathcal I_\theta(y)$ such that $z = y + \beta \theta$. Then $\widehat z = y + \alpha \theta \in \partial_{\minus\theta} \Omega$, and $\Sym : z \in \partial_\theta \Omega \mapsto \widehat z \in \partial_{\minus\theta} \Omega$ is bijective. Hence,
    \[ \forall A \in \partial \Omega, \
        \mu_{\minus\theta}(A) = \mu_\theta(\Sym^{-1}(A)). \]
\end{lemma}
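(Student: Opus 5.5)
The plan is to work line by line along the chords parallel to $\theta$ and then integrate over $H_\theta$ using the definition \eqref{eq:defmutheta} of the directional measures.

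\textbf{Step 1: $\widehat z$ lies in $\partial_{\minus\theta}\Omega$.} Let $z\in\partial_\theta\Omega$. By Lemma \ref{lem:partialthetaboundary} there is an interval $]\alpha,\beta[\ \in\mathcal I_\theta(y)$, with $y=\mathcal P_\theta(z)$, such that $z=y+\beta\theta$. Since intervals in $\mathcal I_\theta(y)$ are disjoint components of $\omega_\theta(y)$, this interval is unique. The point $\alpha\notin\omega_\theta(y)$, because $]\alpha,\beta[$ is a maximal component, so $\widehat z=y+\alpha\theta\in\partial\Omega$. Moreover $\widehat z+t\theta\in\Omega$ for all $t\in\,]0,\beta-\alpha[$. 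Applying Lemma \ref{lem:partialthetaboundary} with $-\theta$ (recall $]{-\beta},-\alpha[\ \in\mathcal I_{\minus\theta}(y)$) gives $\widehat z\in\partial_{\minus\theta}\Omega$. Hence $\Sym$ is well defined.

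\textbf{Step 2: $\Sym$ is a bijection.} The symmetric construction defines a map $\Phi_{\theta}^{\ast}$ from $\partial_{\minus\theta}\Omega$ to $\partial_\theta\Omega$: a point $y+\alpha\theta$ is sent to $y+\beta\theta$, where $]\alpha,\beta[$ is the unique component of $\omega_\theta(y)$ having $\alpha$ as its left endpoint. Both compositions are the identity, since each map only exchanges the two endpoints of the same component on the same line $y+\mathbb R\theta$. Injectivity also follows directly from the uniqueness of that component.

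\textbf{Step 3: the measure identity.} Take $A\in\mathcal B(\partial\Omega)$ and compute by Fubini, following the proof of Lemma \ref{lem:negl}:
\[
\mu_{\minus\theta}(A)=\int_{H_\theta}\sum_{]\alpha,\beta[\in\mathcal I_\theta(y)}(\beta-\alpha)\,\chi_A(y+\alpha\theta)\,{\rm d}y .
\]
This holds because for $x=y+s\theta$ with $s\in\,]\alpha,\beta[$ we have $\Phi_{\minus\theta}(x)=y+\alpha\theta$. In the same way,
\[
\mu_\theta(\Sym^{-1}(A))=\int_{H_\theta}\sum_{]\alpha,\beta[\in\mathcal I_\theta(y)}(\beta-\alpha)\,\chi_{\Sym^{-1}(A)}(y+\beta\theta)\,{\rm d}y .
\]
Since $\chi_{\Sym^{-1}(A)}(y+\beta\theta)=\chi_A(y+\alpha\theta)$, the two integrals coincide.

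Equivalently, and without sums, one can note that $\Sym(\Phi_\theta(x))=\Phi_{\minus\theta}(x)$ for every $x\in\Omega$. Then
\[
\mu_\theta(\Sym^{-1}(A))=\int_\Omega\chi_A(\Sym(\Phi_\theta(x)))\,{\rm d}x=\int_\Omega\chi_A(\Phi_{\minus\theta}(x))\,{\rm d}x=\mu_{\minus\theta}(A).
\]
This is the cleanest route, and it is the one I would write.

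\textbf{Main obstacle: measurability.} The delicate point is to make $\Sym^{-1}(A)$ a legitimate Borel set, or at least $\mu_\theta$-measurable, so that the left side makes sense. I would avoid proving that $\Sym$ is Borel. Instead I would use the pointwise identity $\chi_{\Sym^{-1}(A)}\circ\Phi_\theta=\chi_A\circ\Phi_{\minus\theta}$ on $\Omega$. The right-hand function is Lebesgue measurable because $\Phi_{\minus\theta}$ is measurable, by Lemma \ref{lem:measr} applied to $\delta_{\minus\theta}$. One then interprets $\mu_\theta(\Sym^{-1}(A))$ through \eqref{eq:defmutheta} in the completed sense, in the same way as was done for $\mathcal P_\theta(A)$ in the proof of Lemma \ref{lem:negl}.
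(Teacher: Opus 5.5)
Your proof is correct and follows the paper's argument: well-definedness and bijectivity of $\Sym$ rest, as in the paper, on the correspondence $]\alpha,\beta[\ \in\mathcal I_\theta(y)\Leftrightarrow\ ]{-\beta},{-\alpha}[\ \in\mathcal I_{\minus\theta}(y)$ and on the uniqueness of the component of $\omega_\theta(y)$ with a given endpoint (your explicit inverse replaces the paper's separate injectivity/surjectivity check), while your identity $\Sym\circ\Phi_\theta=\Phi_{\minus\theta}$ on $\Omega$ supplies the measure identity that the paper only asserts with ``Hence''. The measurability detour is not needed: for $\lambda>0$ one has $\{z\in\partial_\theta\Omega:\ell_{\minus\theta}(z)>\lambda\}=\partial_\theta\Omega\cap\bigcap_{t\in]0,\lambda]}(\Omega+t\theta)$, which is Borel by the argument of Lemma \ref{lem:dthetaomborel}, so $\Sym(z)=z-\ell_{\minus\theta}(z)\theta$ is Borel and $\Sym^{-1}(A)\in\mathcal B(\partial\Omega)$.
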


\begin{proof}
    First, recall that $]\alpha, \beta[ \ \in \mathcal I_\theta(y)$ if and only if $]-\beta, -\alpha[ \ \in \mathcal I_{\minus\theta}(y)$. Using Lemma \ref{lem:partialthetaboundary}, we deduce that $\widehat z = y + (-\alpha)(-\theta) \in \partial_{\minus\theta}\Omega$. We now show that $\Sym$ is bijective:
    \begin{itemize}
        \item Let $z, z' \in \partial_\theta \Omega$ such that $\Sym(z) = \Sym(z')$. By setting $y = \mathcal P_\theta(z)$ and $y' = \mathcal P_\theta(z')$, there exist $]\alpha, \beta[ \ \in \mathcal I_\theta(y)$ and $]\alpha', \beta'[ \ \in \mathcal I_\theta(y')$ such that $z = y+\beta \theta$ et $z' = y'+\beta'\theta$. Thus,
        \[ \left(y+\alpha \theta = \Sym(z) = \Sym(z') = y'+\alpha'\theta\right)
            \implies y-y' = (\alpha'-\alpha)\theta. \]
        However, $y-y' \in H_\theta$, so $\alpha'-\alpha = \langle y-y', \theta \rangle = 0$. Finally, $\alpha = \alpha'$, so we obtain $y = y'$ and $\mathcal I_\theta(y) = \mathcal I_\theta(y')$, implying that $\beta = \beta'$. In other words, $z = z'$. \smallskip

        \item Let $w \in \partial_{\minus\theta}\Omega$. Denoting $\xi = \mathcal P_{\minus\theta}(w)$, there exists $]s,t[ \ \in \mathcal I_{\minus\theta}(\xi)$ such that $w = \xi - t\theta$. Since $]-t,-s[ \ \in \mathcal I_\theta(\xi)$, we have $z = \xi - s\theta \in \partial_\theta \Omega$ (cf. Lemma \ref{lem:partialthetaboundary}), and $w = \Sym(z)$. \qedhere
    \end{itemize}
\end{proof}

\section{Bounded integrability with respect to a family of measures}\label{sec:intfam}

\begin{definition}\label{def:lpfamilymeasures}
    Let $(E, \mathcal T)$ be a measurable set, where we denote by $\mathcal T$ a given $\sigma$-algebra on the set $E$. Let $\mathcal{F}$ be any non-empty set  and let $(m_\theta)_{\theta \in \mathcal{F}}$ be a family of measures on $(E, \mathcal T)$. 
    For $p \in [1, +\infty[$, denote by $\mathcal L^p(E, \mathcal T, (m_\theta)_{\theta \in \mathcal{F}})$ the set of all maps $f : E \to \mathbb R$ satisfying
    \[ \exists M > 0, \; \forall \theta \in \mathcal{F}, \; \exists f_\theta \in \mathcal L^p(E, \mathcal T, m_\theta), \; f = f_\theta \; m_\theta\text{-a.e. and } \int_E |f_\theta|^p {\ \rm d} m_\theta \le M. \]
    Consider the following equivalence relation: $f \sim g$ if for all $\theta \in \mathcal{F}$, $f = g$ $m_\theta$-a.e. We denote by $L^p(E, \mathcal T, (m_\theta)_{\theta \in \mathcal{F}})$ the set of all equivalence classes of $\mathcal L^p(E, \mathcal T, (m_\theta)_{\theta \in \mathbb{S}^{d-1}})$.
\end{definition}

\begin{remark}
    In the preceding definition, we do not require that the elements of $\mathcal{L}^p(E,T,(m_\theta)_{\theta\in\mathcal{F}})$ be measurable, but only equal $m_\theta$-a.e. to a measurable function for all $\theta\in\mathcal{F}$.
\end{remark}

\begin{theorem}\label{theo:lpbanach} Let $p \in [1, +\infty[$ be given. Then
    $L^p(E, \mathcal T, (m_\theta)_{\theta \in \mathcal{F}})$ is a Banach space, endowed with the norm defined by
    \[ \Vert f \Vert_p = \sup_{\theta \in \mathcal{F}} \left(\int_E |f|^p{\ \rm d} m_\theta\right)^{1/p} = \sup_{\theta \in \mathcal{F}} \Vert f\Vert_{L^p(E, \mathcal T, m_\theta)}. \]
\end{theorem}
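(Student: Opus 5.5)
We first check that $\Vert\cdot\Vert_p$ is a well-defined norm on the quotient space. If $f\sim g$, then $f=g$ $m_\theta$-a.e. for every $\theta$, so the $m_\theta$-integrals of $|f|^p$ and $|g|^p$ coincide for each $\theta$. The supremum is finite by the defining bound $M$.

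The vector space structure and the norm axioms are as follows.
\begin{itemize}
\item Stability under linear combinations: take the measurable representatives $f_\theta+g_\theta$, and use Minkowski's inequality in each $L^p(m_\theta)$ to get the uniform bound $(M_f^{1/p}+M_g^{1/p})^p$.
\item Triangle inequality: it follows from Minkowski's inequality in each $L^p(m_\theta)$ by taking the supremum over $\theta$.
\item Homogeneity: it is clear.
\item Definiteness: $\Vert f\Vert_p=0$ forces $f_\theta=0$ $m_\theta$-a.e. for all $\theta$, hence $f\sim 0$.
\end{itemize}

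For completeness, we use the absolutely convergent series criterion. Let $(f_n)$ be representatives with $\sum_n \Vert f_n\Vert_p=S<\infty$. The main obstacle is that the index set $\mathcal F$ is arbitrary, possibly uncountable, with no common dominating measure. A limit built separately for each $\theta$, as an $m_\theta$-a.e. limit, would produce a $\theta$-dependent function rather than a single $f:E\to\mathbb R$. To avoid this, we define the limit pointwise on all of $E$, independently of $\theta$. Let
\[
G(x)=\sum_n |f_n(x)|\in[0,+\infty].
\]
Set $f(x)=\sum_n f_n(x)$ where this series converges absolutely, and $f(x)=0$ elsewhere. This is one function on $E$, and it is not necessarily measurable, which is allowed by the remark following Definition~\ref{def:lpfamilymeasures}.

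Now fix $\theta$, with measurable representatives $f_{n,\theta}=f_n$ $m_\theta$-a.e. Since the $f_n$ form a countable family, there is a single $m_\theta$-null set $N_\theta$ outside of which $f_n=f_{n,\theta}$ for all $n$. On $E\setminus N_\theta$ we have $G=\sum_n|f_{n,\theta}|$, which is measurable. By monotone convergence and Minkowski's inequality,
\[
\Vert G\Vert_{L^p(m_\theta)}\le \sum_n \Vert f_n\Vert_{L^p(m_\theta)}\le S.
\]
Hence $G<\infty$ $m_\theta$-a.e., and $f$ coincides $m_\theta$-a.e. with the measurable function
\[
f_\theta=\mathbf 1_{\{G<\infty\}\setminus N_\theta}\sum_n f_{n,\theta},
\]
with $\int|f_\theta|^p\,{\rm d}m_\theta\le S^p$. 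Therefore $f\in\mathcal L^p(E,\mathcal T,(m_\theta)_{\theta\in\mathcal F})$, with the uniform bound $M=S^p$.

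Finally, on the complement of $N_\theta$ and of the null set where $G=\infty$, we have $|f-\sum_{n\le k}f_n|\le \sum_{n>k}|f_n|$. The same Minkowski and monotone convergence argument gives
\[
\Vert f-\textstyle\sum_{n\le k}f_n\Vert_{L^p(m_\theta)}\le\sum_{n>k}\Vert f_n\Vert_p.
\]
This bound is independent of $\theta$. Taking the supremum over $\theta$ yields convergence of the partial sums to $f$ in $\Vert\cdot\Vert_p$, which proves completeness.
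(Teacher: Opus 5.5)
Your proof is correct and is essentially the paper's Riesz--Fischer argument. The paper extracts from a Cauchy sequence a subsequence with $\Vert f_{N_{k+1}}-f_{N_k}\Vert_p\le 2^{-k}$ and dominates by $G=\sum_k\vert g_k\vert$; this is your absolutely-convergent-series criterion in another guise, with the same Minkowski/monotone-convergence bounds that are uniform in $\theta$.

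On the one delicate point your write-up is cleaner than the paper's. The paper defines the limit through a $\theta$-dependent set $E_\theta$ and works directly with possibly non-measurable representatives. You instead define $f$ once on $\{G<\infty\}$, independently of $\theta$, and pass to measurable representatives via a single countable null set $N_\theta$ for each $\theta$.
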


\begin{proof}
    We follow the proofs of \cite[Th\'eor\`emes 4.47 and 4.49]{gh}.
    Let $(f_n)_{n \ge 0} \subset L^p(E, \mathcal T, (m_\theta)_{\theta \in \mathcal{F}})$ be a Cauchy sequence, that is
    \[ \forall \varepsilon > 0, \; \exists N(\varepsilon) \ge 0, \; \forall q,r \ge N(\varepsilon), \; \Vert f_q-f_r \Vert_{p} \le \varepsilon. \]
    Let $k \ge 0$. Consider $N_k \ge N(2^{-k})$ so that $N_{k+1} > N_k$, and set
    \[ g_k = f_{N_{k+1}} - f_{N_k}. \]
    By definition of $N_k$, we obtain
    \[ \Vert g_k \Vert_{p} 
        = \sup_{\theta \in \mathcal{F}} \Vert g_k \Vert_{L^p(E, \mathcal T, m_\theta)}
        \le 2^{-k}. \]
    Let $j \ge 0$, and set
    \[ G_j = \sum_{k=0}^j |g_k|. \]
    For all $\theta \in \mathcal{F}$, by the triangle inequality, we have $\Vert G_j \Vert_{L^p(E, \mathcal T, m_\theta)} \le 2$. Hence, by the monotone convergence theorem, since $(G_j)$ is an increasing map,
    \[ G = \sum_{k \ge 0} |g_k| \in L^p(E, \mathcal T, m_\theta), \; \text{ and } \; \Vert G \Vert_{L^p(E, \mathcal T, m_\theta)} \le 2. \]
    Thus, there exists $E_\theta \subset E$ such that $m_\theta(E \backslash E_\theta) = 0$ and for all $x \in E_\theta$, $G(x) < \infty$. Then we can define, for all $x \in E_\theta$,
    \[ f(x) = \begin{cases}
        f_{N_0}(x) + \sum_{k \ge 0} g_k(x) & \text{ if } x \in E_\theta \\
        0 & \text{ if } x \not\in E_\theta.
    \end{cases} \]
    Then for all $x \in E_\theta$,
    \[ f_{N_j}(x) = f_{N_0}(x) + \sum_{k=0}^{j-1} g_k(x)
        \xrightarrow[j \to \infty]{} f(x). \]
    In addition, for all $j \ge 0$ and $x \in E_\theta$,
    \[ |f_{N_j}(x)| = |f_{N_0}(x) + \sum_{k=0}^{j-1} g_k(x)|
        \le |f_{N_0}(x)| + |G_j(x)|
        \le |f_{N_0}(x)| + |G(x)|. \]
    Using the dominated convergence theorem, $f \in L^p(E, \mathcal T, m_\theta)$. Moreover,
    \[ \sup_{\theta \in \mathcal{F}} \Vert f \Vert_{L^p(E, \mathcal T, m_\theta)}
        \le \sup_{\theta \in \mathcal{F}} \Vert f_{N_0} \Vert_{L^p(E, \mathcal T, m_\theta)}
            + \sup_{\theta \in \mathcal{F}} \Vert G \Vert_{L^p(E, \mathcal T, m_\theta)}
        \le 2 + \sup_{\theta \in \mathcal{F}} \Vert f_{N_0} \Vert_{L^p(E, \mathcal T, m_\theta)}. \]
    Thus, $f \in L^p(E, \mathcal T, (m_\theta)_{\theta \in \mathcal{F}})$. To finish, let $n \ge 0$. Considering $\ell \ge 0$ so that $N_\ell \le n$, we get
    \[ \Vert f-f_n \Vert_p
        \le \Vert f - f_{N_\ell} \Vert_p + \Vert f_{N_\ell} - f_n \Vert_p
        \le \sum_{k \ge \ell} \Vert g_k \Vert_p + 2^{-\ell}
        \le 2 \times 2^{-\ell}. \]
    Hence, we conclude that
    \[ \Vert f-f_n \Vert_p \xrightarrow[n \to \infty]{} 0. \]
    We showed that $L^p(E, \mathcal T, (m_\theta)_{\theta \in \mathcal{F}})$ is a complete space: it is a Banach space.
\end{proof}

\section{Generalisation of the Cantor staircase function}\label{sec:escalierdiable}

In this section, for the sake of completeness, we give an elementary proof of the existence of a function which has the properties of the Cantor staircase function: monotonous, continuous and constant on all the open intervals of a given family. This proof does not use the notion of singular measures with respect to the Lebesgue measure.
\begin{lemma}\label{lem:escalierdiable}
    Let  two reals $\alpha < \beta$ be given. Let $I\subset \mathbb{N}$ be a non-empty set, let 
    $(a_i,b_i)_{i\in I}$ be such that for all $i\in I$, $\alpha < a_i < b_i < \beta$ and all the intervals $[a_i,b_i]$ are disjoint.  There exists a continuous and increasing map $f : [\alpha,\beta] \to [0,1]$, such that $f(\alpha) = 0$, $f(\beta) = 1$ and $f$ is constant on all $[a_i,b_i]$, $i \in I$.
\end{lemma}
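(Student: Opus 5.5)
We will build $f$ as a normalised distribution function of a measure that puts no mass on any $[a_i,b_i]$, using only elementary tools. Set $U=\bigcup_{i\in I}\,]a_i,b_i[$ and $K=[\alpha,\beta]\setminus U$. If $K$ has positive Lebesgue measure, the simplest candidate is
\[
f(x)=\frac{\lambda(K\cap[\alpha,x])}{\lambda(K)}.
\]
This $f$ is continuous (indeed $1/\lambda(K)$-Lipschitz) and nondecreasing, with $f(\alpha)=0$ and $f(\beta)=1$. It is constant on each $[a_i,b_i]$, since $\lambda(K\cap[a_i,b_i])=\lambda(\{a_i,b_i\})=0$. In general, though, $\lambda(K)$ may be zero, for instance when $K$ is a Cantor set of measure zero.

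For a uniform treatment, we give weight to the gaps instead. The intervals $[a_i,b_i]$ are pairwise disjoint closed subsets of $]\alpha,\beta[$, so the closed set $K$ contains $\alpha$, $\beta$ and every endpoint $a_i,b_i$. Choose weights $w>0$ and $w_i>0$ with $w+\sum_{i\in I} w_i<\infty$, for instance $w_i=2^{-i}$. Then define
\[
F(x)=w\,\frac{x-\alpha}{\beta-\alpha}\;\text{restricted to }K\;+\;\dots
\]
This is awkward, so we instead use the one-point construction below, which avoids measures altogether.

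Since $I\subset\mathbb N$, we order the intervals by index and proceed by a limiting procedure. For each finite $n$, let $I_n=I\cap\{0,\dots,n\}$. We define $f_n$ as the unique continuous nondecreasing function on $[\alpha,\beta]$ that is constant on each $[a_i,b_i]$ with $i\in I_n$ and affine with a common slope on the complementary finite union of closed intervals. Its slope is $1/L_n$, where $L_n=(\beta-\alpha)-\sum_{i\in I_n}(b_i-a_i)$. Then $f_n(\alpha)=0$ and $f_n(\beta)=1$.

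The difficulty is that $L_n$ may tend to $0$, which is exactly the measure-zero Cantor case, so we must prove uniform convergence directly. To make this manageable, we modify the construction in the spirit of the Cantor function: when the interval $[a_{i},b_{i}]$ is added, it lies in some component $[c,d]$ of $[\alpha,\beta]$ minus the earlier intervals. On $[c,d]$, the function $f_{n}$ goes from $p$ to $q$, and we redefine it to equal $(p+q)/2$ on $[a_i,b_i]$, affine on $[c,a_i]$ and on $[b_i,d]$. Values already fixed at earlier endpoints are then never changed, each $f_n$ is continuous and nondecreasing, and $f_n$ agrees with $f_{n-1}$ outside the component being split.

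The main obstacle is to obtain uniform convergence together with continuity of the limit. Following the dyadic splitting, the oscillation of $f_n$ on a component equals the increment $q-p$, which halves at each split of that component. We must show that for every $\varepsilon>0$ there is $\delta>0$ such that every component of length less than $\delta$ eventually has increment at most $\varepsilon$. An unsplit long component would violate this, so care is needed. We therefore plan to combine the halving rule with the linear part: on a split component, assign the increments $(q-p)\,\frac{a_i-c}{(a_i-c)+(d-b_i)}$ and $(q-p)\,\frac{d-b_i}{(a_i-c)+(d-b_i)}$ rather than halves. With this choice every $f_n$ satisfies $f_n(x')-f_n(x)\le$ a bound depending only on the lengths of the remaining gaps. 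We expect that proving the resulting modulus of continuity is uniform in $n$ will be the heart of the argument.

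Granting an equicontinuity estimate of this kind, Arzelà–Ascoli gives a uniformly convergent subsequence with a continuous nondecreasing limit $f$. This $f$ takes the values $0$ at $\alpha$ and $1$ at $\beta$, and it is constant on each $[a_i,b_i]$, because $f_n$ is already constant there for all $n\ge i$. This is the function required in Lemma~\ref{lem:escalierdiable}.
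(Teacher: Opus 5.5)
Your proposal has a genuine gap, and it sits where you put ``the heart of the argument''. The equicontinuity of $(f_n)$ is never proved, only granted. Worse, for the rule you finally adopt (increments proportional to the lengths of the two remaining pieces) it is false. Take $[\alpha,\beta]=[0,1]$ and $[a_j,b_j]=[2^{-j-1},\,2^{-j}-4^{-j-1}]$ for $j\in\mathbb N$; these are disjoint closed subintervals of $]0,1[$. When $[a_j,b_j]$ is added, it cuts the component $[0,2^{-j}]$ into $[0,2^{-j-1}]$ and a piece of length $4^{-j-1}$. Your rule then gives $f_n(2^{-j-1})=f_n(2^{-j})/(1+2^{-j-1})$, and this value is frozen afterwards. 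Hence, for $n\ge k-1$,
\[
f_n(2^{-k})=\prod_{i=1}^{k}\bigl(1+2^{-i}\bigr)^{-1}\ \ge\ e^{-1},\qquad f_n(0)=0.
\]
So the family is not equicontinuous at $0$, and the pointwise limit jumps at $0$. Consequently no subsequence converges uniformly, and Arzel\`a--Ascoli cannot be invoked. The proportional rule only localises the defect you already saw in the common-slope construction. It distributes mass according to current lengths, and a piece that looks long may later lose almost all of its length. Here $\lambda(K)=1/3>0$, so your first formula would handle this example, but the ``uniform treatment'' does not.

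The halving rule you discarded is in fact correct, and it is the paper's mechanism: each component of generation $m$ carries mass $2^{-m}$. Your worry about an unsplit long component is unfounded. Every $[a_i,b_i]$ lies in the interior of a component and is removed at step $i$. So for $x$ outside all the $[a_i,b_i]$, the component $C_n(x)=[c,d]$ containing $x$ has only two possible fates.
\begin{itemize}
\item If it is split infinitely often, let $m_n(x)$ be the number of splits so far. Since endpoint values are frozen, for all $N\ge n$ the function $f_N$ oscillates by at most $2^{-m_n(x)}\to 0$ on $C_n(x)$ together with its adjacent intervals $[a_j,b_j]$, where $f_N$ is constant. That union is a neighbourhood of $x$ in $[\alpha,\beta]$.
\item If it is eventually never split, it contains no $[a_i,b_i]$, and $f_n$ is eventually affine on it.
\end{itemize}
This yields a pointwise limit that is continuous and nondecreasing, equals $0$ at $\alpha$ and $1$ at $\beta$, and is constant on each $[a_i,b_i]$.

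The paper avoids this case analysis by splitting all components simultaneously, each at the longest interval it contains. Then every component split at step $p$ carries mass exactly $2^{-p}$, which gives $\Vert f_{p+1}-f_p\Vert_\infty\le 2^{-p-1}$ and hence uniform convergence outright. The longest-interval choice guarantees that each $[a_i,b_i]$ is eventually removed. Otherwise infinitely many disjoint chosen intervals would each be at least as long as it, contradicting that their total length is at most $\beta-\alpha$.
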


\begin{proof}
    For any reals $c<d$, define $I_{c,d} = \{i\in I : \ ]a_i,b_i[ \ \subset [c,d]\}$. If $I_{c,d} \neq \varnothing$, since $\sum_{i\in I_{c,d}}(b_i - a_i) \le d-c$, we can choose an element $i_{c,d}\in I_{c,d}$ (non-necessarily unique) such that, for all $j\in I_{c,d}$, it holds  $b_j - a_j \le b_{i_{c,d}} - a_{i_{c,d}}$.\medskip

    \textbf{Step 1}: We construct the sequence $(F_p)_{p\in\mathbb N}$.
    \begin{itemize}
        \item Let $F_0 = \{ (\alpha,\beta,0)\}$.
        \item For $p \in \mathbb N$, let $F_p$ be given. 
        For all $\xi := (c,d,m)$ in $F_p$, if  $I_{c,d} = \varnothing$, define $F_{p+1}^\xi = \{\xi\}$. Otherwise, define $F_{p+1}^\xi = \{ (c,a_{i_{c,d}},m+1),(b_{i_{c,d}},d,m+1)\}$ (observe that, since $c\in \{\alpha\}\cup \{b_i,i\in I\}$ and $d\in \{\beta\}\cup \{a_i,i\in I\}$, we have $c<a_{i_{c,d}}<b_{i_{c,d}}<d$). 
        
        Define $F_{p+1} = \bigcup_{\xi\in F_p}F_{p+1}^\xi$.
    \end{itemize}

First remark that, for a given $p\in \mathbb N$, if there are two distinct elements $(c_1,d_1,m_1)$ and $(c_2,d_2,m_2)$ in $F_p$, then $[c_1,d_1]\cap [c_2,d_2] = \varnothing$. Besides, this construction implies that, for all $p\in \mathbb N$ and all $i\in I$, either there exists $(c,d,m) \in F_p$ such that $i\in I_{c,d}$ (and then $m=p$ since $m<p$ implies that $I_{c,d} = \varnothing$), or there exists $c,d,m_1,m_2$ such that $\{ (c,a_i,m_1),(b_i,d,m_2)\}\subset F_p$. 
    Assume that for some $i\in I$,  there exist for all $p\in\mathbb N$ an element $(c_p,d_p,p) \in F_p$  such that $i\in I_{c_p,d_p}$. This means that $I_{c_p,d_p}$ is not empty, and that $j_p:=i_{c_p,d_p} \neq i$. We then get that $b_i - a_i \le b_{j_p} - a_{j_p}$ for all $p\in\mathbb N$. This is impossible since all $j_p$ must be distinct and $\sum_{p\in\mathbb N}(b_{j_p} - a_{j_p})\le \beta - \alpha$. Therefore, for all $i\in I$, denote by $p_i\in\mathbb N$ the greatest $p\in\mathbb N$ such that there exists $(c_p,d_p,p) \in F_p$  such that $i\in I_{c_p,d_p}$. \medskip

    \textbf{Step 2}: Based on the observation that, for all $p \in \mathbb N$, 
    \[ \sum_{(c,d,m)\in F_p} 2^{-m} = 1, \] 
    we define the function $u_p$ for a.e. $x\in [\alpha,\beta]$ by
    \[ u_p(x) = \sum_{(c,d,m)\in F_p} \frac{1}{2^{m} (d - c)} \chi_{]c,d[}(x), \]
    where $\chi_{]c,d[}$ is the characteristic function of $]c,d[$.
    Then we have $\int_0^1 u_p(x)= 1$, and, for all $i\in I$, $u_p(x) = 0$ a.e. on $]a_i,b_i[$ if $p>p_i$. The functions $u_p$ are such that, if $(c,d,m)\in F_p$, then for all $q\ge p$, $\int_c^d u_q(x){\rm d}x = 2^{-m}$. Hence, we may set
    \[ f_p : t \in [\alpha,\beta] \mapsto \int_\alpha^t u_p(x) \; \mathrm dx. \]
    For all $p \in \mathbb N$, $f_p$ is a continuous map on $[\alpha,\beta]$, which is affine and increasing on all $[c,d]$ such that there exists $m$ with $(c,d,m)\in F_p$, and constant elsewhere. Moreover, let $(c,d,m)\in F_p$.
    \begin{itemize}
    \item By construction, if $(c,d,m)\in F_{p+1}$, then $f_{q}(t) = f_p(t)$ for all $t\in [c,d]$ and all $q\ge p$.
    \item Otherwise, if there exists $a,b$ such that $\{(c,a,m+1),(b,d,m+1)\}\subset F_{p+1}$, then $m=p$ and
     the function $f_{p+1}- f_p$, which is continuous and piecewise affine, satisfies
        \[ (f_{p+1}- f_p)(c) = 0, \qquad (f_{p+1}- f_p)(a) = \frac{a - c}{2^{p} (d - c)} - \frac{1}{2^{p+1}} \in \left[- \frac{1}{2^{p+1}}, \frac{1}{2^{p+1}}\right], \]   
        \[ (f_{p+1}- f_p)(d) = 0, \qquad (f_{p+1}- f_p)(b) =  \frac{1}{2^{p+1}} - \frac{d - b}{2^{p} (d - c)} \in \left[- \frac{1}{2^{p+1}}, \frac{1}{2^{p+1}}\right]. \]
        This yields that, for all $t\in [c ,d]$, $|(f_{p+1}- f_p)(t)|\le  \frac{1}{2^{p+1}}$.
    \end{itemize}

    Finally, $||f_{p+1} - f_p||_{\infty} \le 2^{-1-p}$, so $(f_p)$ is a uniformly Cauchy sequence on $[\alpha,\beta]$. Hence, $(f_p)$ converges uniformly on $[\alpha,\beta]$ to a continuous map $f$, such that $f$ is non-decreasing (so as the $f_p$), $f(\alpha) = 0$, $f(\beta) = 1$ and $f$ is constant on $\bigcup_{i\in I} ]a_i,b_i[$ since, for all $p> p_i$, $f_p$ is constant on $[a_i,b_i]$.
\end{proof}

\end{document}